\nopagebreak\refstepcounter{theorem} \par{\it\ Assumption}\ \thetheorem{}. \ignorespaces\nopagebreak}%
\newtheorem{thm}{Theorem}
\newtheorem{prop}{Proposition}
\newtheorem{defi}{Definition}
\newtheorem{cor}{Corollary}
\newtheorem{lem}{Lemma}
\newtheorem*{remark}{Remark}
\newcommand\blfootnote[1]{%
	\begingroup
	\renewcommand\thefootnote{}\footnote{#1}%
	\addtocounter{footnote}{-1}%
	\endgroup
}
\title{Chemical Kinetics, Markov Chains, and the Imaginary It\^{o} Interpretation}
\author{\'{A}lvaro Correales, Carlos Escudero, Mariya Ptashnyk} 
\begin{document}
\maketitle

\begin{abstract}
The abstract chemical reaction
$$
A+A \to \emptyset,
$$
understood as a Markov chain in continuous time, has been studied in the physical literature for several years.
It has been claimed that this reaction can be described by means of the stochastic differential equation
$$
d \phi = - \phi^2 dt + i \, \phi \, dW_t,
$$
where $i$ is the imaginary unit. This affirmation is, at least, intriguing, and has led to controversy and criticisms in the literature.
The goal of this work is to give partial evidence that such a description may be possible.
\end{abstract}

\blfootnote{ \hspace{-.7cm} {\it Keywords:} Duality, It\^o diffusions on the complex plane, Markov chains, partial differential equations,
stochastic differential equations, stochastic reaction processes. \\ \noindent {\it 2010 MSC:} 60H10, 60H30, 60J27, 60J60, 35C99, 46F20.}

\section{Introduction}

\label{intro}

The abstract chemical reaction
\begin{equation}\label{mainreact}
A + A \stackrel{\lambda}{\longrightarrow} \emptyset,
\end{equation}
denotes in the physical literature a continuous time Markov chain with an infinite state space $\left\lbrace n\right\rbrace$,
\\ $n\in \mathbb{N} \cup \{0\}$,
and which probability distribution is described by the differential equation
\begin{equation}\label{master}
\frac{d}{dt} P_n(t) = \frac{\lambda}{2} [(n+2)(n+1)P_{n+2}(t)-n(n-1)P_n(t)].
\end{equation}
The explicit solution of this forward Kolmogorov or master equation is well known since long ago~\cite{Mcq1,Mcq2}. Despite of this fact, or perhaps
as a consequence of it, research regarding this particular Markov chain has grown since then. One of the most intriguing affirmations
regarding this Markov process is its equivalence to the solution of the stochastic differential equation (SDE)
\begin{equation}\label{isde}
d \phi = - \phi^2 dt + i \, \phi \, dW_t,
\end{equation}
where $i$ is the imaginary unit, after the rescaling of time $t \to t/\lambda$ has been performed. It is clear that at least two facts
can be surprising in this claim; first, we are moving from a discrete space state in~\eqref{master} to a continuous one in~\eqref{isde} while claiming they are both equivalent, and not just an approximation of one another
(despite of the fact that this equation is reminiscent of a continuum limit of the Markov chain, see Appendix~\ref{sectvan}).
Second, a purely jump stochastic process is assimilated to a diffusion in the complex plane. Of course, the precise meaning of the
word \emph{equivalence} in this context will be key in unveiling the potential relations between equations~\eqref{master} and~\eqref{isde},
if any. Let us start summarizing how this idea appears and develops in the literature.
The use of SDEs with an imaginary diffusion to describe Markov chains modeling stoichiometric relations dates back to 1977~\cite{gardiner0}.
Equations of the type of~\eqref{isde} were developed in the context of the Poisson representation, which is connected with earlier quantum
theory~\cite{gardiner}.
The same idea reappeared in~\cite{cardy2}, where this SDE
is derived from equation~\eqref{master} by means of formal but sophisticated field-theoretic
methods\footnote{Actually, a stochastic partial differential equation of reaction-diffusion type is derived.
But our SDE follows from the same argument provided a zero-dimensional system were considered. Also, note
	the difference in the notations used.}. This new formalism, again of quantum-theoretic inspiration, gave an increased popularity
to the use of these equations,
which reappeared in the literature many times since then, like for instance in~\cite{cardy3,cardy4, dfhk,gdl, howard,alex,munoz1,tauber},
where this list is not meant in any sense to be exhaustive.
Despite of this popularity, no rigorous derivations, to the best of our knowledge, are present anywhere;
the same formal field-theoretic or Poisson-representation methods referred to above,
which seem to be equivalent to a large extent~\cite{droz}, are always employed. Although seemingly accepted for years, the description of~\eqref{master} in terms of~\eqref{isde} has been recently put into question in~\cite{benitez} and~\cite{wiese}.
However, this contraposition of simultaneously formal, but sophisticated, field-theoretic
arguments does not clarify what is the precise range of validity of equation~\eqref{isde}, if any. Therefore it seems that a careful stochastic analysis
of the problem could serve to clarify under which precise conditions this SDE can be used. The present paper aims to establish a first step in this direction.

The outline of this work is as follows. In section~\ref{qm} we summarize previous approaches that formally derive equation~\eqref{isde} using a
quantum mechanical formalism; we however do not follow exactly the path considered in the physical literature and construct our own viewpoint of this theory.
In section~\ref{gf} we re-derive these results using a more classical approach, that of generating functions.
In section~\ref{rnoise} we describe an explicitly solvable example of this theory that illustrates its correctness, at least in those cases
in which the noise is real. In section~\ref{notunreal} we approach the main issue of our work, i.e.~that of the validity of equation~\eqref{isde} as
an exact description of Markov chain~\eqref{master}, and show evidence that this indeed could be the case. Finally, we analyze the
stochastic dynamics of equation~\eqref{isde} in section~\ref{dynamics}, which developments complement those of  section~\ref{notunreal}.
Our main conclusions are drawn in section~\ref{conclusions}.

\section{Quantum Mechanics of Chemical Kinetics: The Formal Approach}
\label{qm}

Our objective is to study the abstract chemical reaction
$$
A+ A \stackrel{\lambda}{\longrightarrow} \emptyset,
$$
understood as a continuous in time Markov chain.
We define
\begin{displaymath}
P_n(t) \, dt := \mbox{probability of having } n \mbox{ particles in the time interval } [t,t+dt).
\end{displaymath}
Then clearly $P_n \geq 0 \,\, \forall \, n= 0, 1, 2, \ldots$, and $\sum_{n=0}^\infty P_n=1$.
This probability distribution obeys the following forward Kolmogorov equation
\begin{equation}
\begin{aligned}\label{mark11}
\frac{d}{dt} P_n(t) = \frac{\lambda}{2} [(n+2)(n+1)P_{n+2}(t)-n(n-1)P_n(t)].
\end{aligned}
\end{equation}
This differential equation can be regarded as an infinite system of ordinary differential equations.
We will now build an alternative way of approaching this problem, in the hope it will facilitate its analysis.

\subsection{The Abstract Vector Space Representation}

Our first step will be to build an abstract representation that embodies in its formulation the elements of the Markov chain.
Such a theory is known as the \emph{Doi-Peliti formalism} in the physical literature~\cite{doi1,doi2,peliti,peliti1}.
We start considering the set of linearly independent vectors $\mathcal{B}=\left\lbrace \, \left| n\right> : n \in \mathbb{N}\cup \left\lbrace 0\right\rbrace \right\rbrace$,
and the vector space they span over the real numbers, which we will henceforth denote as $\mathcal{R}$. One can  think of this set as the canonical Schauder basis that spans the space $\mathbb{R}^\infty$
defined to be the vector space of all sequences of real numbers.
These vectors are the ``states'' of the physical theory: $\left| j \right>$ describes
the state of the system that corresponds to the existence of exactly $j$ particles in the Markov chain. The goal of this construction is
to describe the time evolution of the Markov chain in a formalism akin to that of quantum mechanics. In order to achieve this, this
vector space should be endowed with a scalar product, an operation needed to tackle the problem of measurements in quantum mechanics.
However, this will be somehow problematic within the present formalism.

Before getting into this issue, we need to add some more structure to this vector space. We shall define
the following operators:
\begin{defi}[Annihilation and creation operators]
We define the action of these linear operators through their action on the elements of the basis.
\begin{itemize}
\item The annihilation operator acts $a\left|n\right>:=n\left|n-1\right>$ if $n \ge 1$ and $a\left|0\right>:=0$.
\item The creation operator acts $c\left|n\right>:=\left|n+1\right>$.
\end{itemize}
\end{defi}
The definition of the annihilation and creation operators  immediately implies the following result.
\begin{lem}
The commutator of the annihilation and creator operators is the identity operator
\begin{displaymath}
\left[a,c\right] \equiv ac-ca=1.
\end{displaymath}
\end{lem}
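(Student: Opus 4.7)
The plan is to verify $[a,c]=1$ directly on the basis $\mathcal{B}$, then extend by linearity. Since $a$ and $c$ were defined through their action on the basis vectors $\left|n\right>$, and every element of $\mathcal{R}$ is a finite linear combination of these, it is enough to establish $(ac - ca)\left|n\right> = \left|n\right>$ for every $n\in\mathbb{N}\cup\{0\}$; linearity then propagates the identity to the whole space.

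First I would compute $ac\left|n\right>$: applying $c$ first gives $\left|n+1\right>$, and then applying $a$ produces $(n+1)\left|n\right>$ for every $n \geq 0$. The formula is valid uniformly because $n+1 \geq 1$ keeps us away from the $a\left|0\right> = 0$ exception. Second, I would compute $ca\left|n\right>$: for $n \geq 1$ this yields $c(n\left|n-1\right>) = n\left|n\right>$, while for $n=0$ the convention $a\left|0\right> = 0$ forces $ca\left|0\right> = 0$. Subtracting gives $(ac - ca)\left|n\right> = (n+1)\left|n\right> - n\left|n\right> = \left|n\right>$ when $n \geq 1$, and $(ac-ca)\left|0\right> = \left|0\right> - 0 = \left|0\right>$. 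In either case the result is $\left|n\right>$, which is the identity operator acting on the basis vector.

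The only item requiring any care is the boundary $n=0$, where the piecewise definition of $a$ could in principle create a discrepancy between the two orderings. The calculation shows that the formulas $(n+1)\left|n\right>$ and $n\left|n\right>$ remain compatible at $n=0$ precisely because in $ac$ the creation operator first lifts $\left|0\right>$ to $\left|1\right>$, away from the boundary, before annihilation acts, while in $ca$ the annihilation of $\left|0\right>$ happens to contribute the same value (zero). I do not expect any substantive obstacle here: the proof is a short bookkeeping check on basis elements, and the whole novelty of the statement is the bosonic commutation relation as a consequence of the chosen normalization $a\left|n\right>=n\left|n-1\right>$ (rather than the more common $\sqrt{n}\left|n-1\right>$).
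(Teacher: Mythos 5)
Your proposal is correct and follows essentially the same route as the paper: compute $ac\left|n\right>=(n+1)\left|n\right>$ and $ca\left|n\right>=n\left|n\right>$ on basis vectors and subtract. The only difference is that you treat the $n=0$ boundary case explicitly, which the paper leaves implicit; this is a harmless (indeed slightly more careful) elaboration of the same argument.
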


\begin{proof}
Compute
\begin{eqnarray}\nonumber
ac\left|n\right> &=& a\left|n+1\right> = (n+1) \left|n\right>,
\\ \nonumber
ca\left|n\right> &=& cn\left|n-1\right> = nc\left|n-1\right> = n \left|n\right>,
\end{eqnarray}
and subtract both equations.
\end{proof}

\begin{remark}[Combinatorics and operators]
We can think of the annihilation as a combinatoric operation. If we are given $n$ particles and about to
annihilate (or for the same purpose remove) one of them, we can do it in $n$ possible ways with the obvious result of ending up with $n-1$ particles. One can regard this as the combinatoric meaning of $a\left|n\right>=n\left|n-1\right>$. On the contrary, there is only one way to create (or add) a new particle.
This interpretation allows to build an intuitive picture for the non-vanishing commutator: given a set of $n$ particles, there are $n+1$ ways of creating
and then annihilating one particle; however, there are only $n$ ways in which we can do the same operations in reversed order.
\end{remark}

If, as in quantum mechanics, we want the creation and annihilation operators to be adjoint of each other, i.e. $c=a^{\dagger}$,
we need to endow $\mathcal{R}$ with a scalar product. We assume the elements of $\mathcal{B}$ to be pairwise orthogonal and proceed
using the language of quantum mechanics and denoting our scalar product as $\left< \cdot \right. \! \left| \cdot \right>$.

\begin{lem}\label{lemnorm}
The normalization condition $\left<0\right.\!\left|0\right>=1$ implies
\begin{displaymath}
\left<m\right.\!\left|n\right>=n! \, \delta_{nm},
\end{displaymath}
where the Kronecker delta
\begin{displaymath}
\delta_{nm}:=\left\lbrace \begin{array}{ccc}
1 & \mbox{if} & n=m \\
0 & \mbox{if} & n\neq m
\end{array}
\right..
\end{displaymath}
\end{lem}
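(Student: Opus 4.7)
The plan is to exploit the adjoint relation $c=a^{\dagger}$ to convert the inner product $\langle n \mid n \rangle$ into $\langle n-1 \mid n-1 \rangle$ times a scalar, thereby setting up an induction on $n$. The off-diagonal statement is essentially free: pairwise orthogonality of $\mathcal{B}$ has already been assumed, so $\langle m \mid n\rangle = 0$ whenever $m \neq n$, and only the diagonal values $\langle n \mid n\rangle = n!$ need to be computed.

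First I would establish the base case $n=0$, which is simply the normalization condition $\langle 0 \mid 0\rangle = 1 = 0!$ stated in the hypothesis. For the inductive step, assume $\langle n-1 \mid n-1\rangle = (n-1)!$ for some $n\geq 1$. Using $c\left|n-1\right> = \left|n\right>$, I would write
\begin{equation*}
\langle n \mid n\rangle = \langle n \mid c \mid n-1\rangle = \langle a n \mid n-1 \rangle = n \langle n-1 \mid n-1\rangle,
\end{equation*}
where the middle equality uses the adjointness $c=a^{\dagger}$ and the last equality uses the defining action $a\left|n\right> = n\left|n-1\right>$ together with linearity of the scalar product. Applying the induction hypothesis gives $\langle n \mid n\rangle = n\cdot (n-1)! = n!$, completing the recursion.

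Assembling the two cases with the Kronecker delta yields the claimed formula $\langle m \mid n \rangle = n!\,\delta_{nm}$. I do not anticipate a real obstacle: the main subtlety is just to be careful with the placement of $a$ versus $c$ when moving across the inner product (ensuring one uses $c=a^{\dagger}$ rather than the reverse, which would produce a trivial identity), and to notice that since pairwise orthogonality is built into the setup, no separate argument is needed to kill the off-diagonal terms.
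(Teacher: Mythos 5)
Your proposal is correct and follows essentially the same route as the paper: the paper also uses the adjointness $c=a^{\dagger}$ to derive the index-shifting recursion $n\left<m-1\right.\!\left|n-1\right>=\left<n\right.\!\left|m\right>$, then combines it with the normalization $\left<0\right.\!\left|0\right>=1$ and the assumed pairwise orthogonality; your version simply specializes the recursion to the diagonal $m=n$ and phrases it as an explicit induction, which is a clean and equivalent presentation.
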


\begin{proof}
By definition of an adjoint operator
\begin{displaymath}
\left<m\right|a\left|n\right>=\left<n\right|a^{\dagger}\left|m\right>.
\end{displaymath}
Then
\begin{displaymath}
n\left<m\right.\!\left|n-1\right>=\left<n\right.\!\left|m+1\right>,
\end{displaymath}
and by renaming the  variable $m$
\begin{displaymath}
n\left<m-1\right.\!\left|n-1\right>=\left<n\right.\!\left|m\right>,
\end{displaymath}
which is valid for $n\geq 1$ and $m\geq 1$. Imposing $\left<0\right.\!\left|0\right>=1$ and using orthogonality it is easy to see that
$\left<m\right.\!\left|n\right>=n! \, \delta_{nm}$.
\end{proof}

\begin{remark}
First, the Schauder basis $\mathcal{B}$ is obviously orthogonal (by construction of the scalar product) but not orthonormal.
Second, the scalar product is only partially defined on $\mathcal{R} \times \mathcal{R}$; note its domain is
$\mathbb{H} \times \mathbb{H} \subsetneq \mathcal{R} \times \mathcal{R}$, where
\begin{displaymath}
\mathbb{H} = \left\lbrace  \left| \Psi \right> = \sum_{n=0}^{\infty} \alpha_n \left|n\right> : \| \Psi \| := \left( \sum_{n=0}^{\infty}n! \, |\alpha_n|^2 \right)^{1/2}
< \infty  \right\rbrace.
\end{displaymath}
However  the annihilation operator is not an inner operation in this subset of $\mathcal{R}$; in other words
\begin{displaymath}
\begin{array}{ccc}
a:&\mathbb{H} \longrightarrow \mathbb{H}
\end{array}
\end{displaymath}
is not well defined, because
$$
\| a\left|\Psi \right> \|^2=\sum_{n=0}^\infty n! \, (n+1)^2 \, |\alpha_{n+1}|^2
=\sum_{n=1}^\infty n! \, n \, |\alpha_{n}|^2=\sum_{n=0}^\infty n! \, n \, |\alpha_{n}|^2
$$
does not need to be finite.
Finally, the subspace $\mathbb{H}$ does not contain all vectors of the form $\left|\Psi \right>= \sum_{n=0}^\infty P_n \left|n \right>$,
because it is assumed that the sequence of coefficients $P_n$  belongs only to $\ell^1$ (since they are probabilities).
\end{remark}

As the remark above shows, the set of states of the form $\left|\Psi \right>= \sum_{n=0}^\infty P_n \left|n \right>$ cannot be endowed with a Hilbert
space structure under the considered scalar product. Therefore one has to consider the pair
$(\{\left|\Psi \right>\},\left< \cdot \right. \! \left| \cdot \right>)$ only as a \emph{formal} Hilbert space, a subtlety not always remarked
in the physical literature~\cite{peliti}. There is however a way out of this pitfall,
which is exactly the one employed in quantum mechanics: the introduction of a Gelfand triple~\cite{reedsimon}. However, given that we are considering the Doi-Peliti
formalism only for the sake of contextualization, we will not explore this direction in the present work.

The objective of this formalism is to work with the state vectors
\begin{equation}\label{abst1}
\left|\Psi(t)\right> :=\sum_{n=0}^{\infty} P_n(t) \left|n\right>,
\end{equation}
where $P_n(t)$ is taken to be the solution of the forward Kolmogorov equation~(\ref{mark11}).

\begin{thm}\label{thschrodinger}
Equation~\eqref{mark11} is equivalent to
\begin{equation}\label{abst2}
\dfrac{d}{dt}\left|\Psi(t)\right>=\dfrac{\lambda}{2}[1-(a^{\dagger})^2]a^2\left|\Psi(t)\right>.
\end{equation}
\end{thm}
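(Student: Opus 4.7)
The plan is to verify the identity by direct coefficient matching, using only the definitions of $a$ and $a^\dagger$ on the basis $\mathcal{B}$. Since $\left|\Psi(t)\right>$ is a formal sum and both sides of~\eqref{abst2} act coefficientwise on $\mathcal{B}$, the equivalence can be read off by comparing the scalar coefficients in front of each $\left|n\right>$; no analytic/convergence issues arise because~\eqref{mark11} is assumed as a pointwise-in-$n$ ODE identity.

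First, I would differentiate~\eqref{abst1} termwise in $t$ and substitute~\eqref{mark11}, obtaining
\begin{equation*}
\frac{d}{dt}\left|\Psi(t)\right> \;=\; \frac{\lambda}{2}\sum_{n=0}^{\infty}\bigl[(n+2)(n+1)P_{n+2}(t)-n(n-1)P_n(t)\bigr]\left|n\right>.
\end{equation*}

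Second, I would compute the right-hand side of~\eqref{abst2}. From the defining actions $a\left|n\right> = n\left|n-1\right>$ (with $a\left|0\right>=0$) and $a^\dagger\left|n\right>=\left|n+1\right>$, iteration yields
\begin{equation*}
a^2\left|n\right> = n(n-1)\left|n-2\right>, \qquad (a^\dagger)^2\left|n\right>=\left|n+2\right>,
\end{equation*}
with the convention that $a^2\left|n\right>=0$ for $n<2$. Applying $a^2$ to $\left|\Psi(t)\right>$ and reindexing $n\mapsto n+2$ gives $a^2\left|\Psi(t)\right>=\sum_{n=0}^{\infty}(n+2)(n+1)P_{n+2}(t)\left|n\right>$. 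Applying $(a^\dagger)^2$ to this expression and reindexing back produces $\sum_{n=0}^{\infty} n(n-1)P_n(t)\left|n\right>$ (the terms $n=0,1$ vanish automatically). Subtracting and multiplying by $\lambda/2$ reproduces exactly the formal sum obtained in the first step.

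Hence the coefficient of every basis vector $\left|n\right>$ on both sides of~\eqref{abst2} coincides with $\tfrac{d}{dt}P_n(t)$, establishing one direction of the equivalence; the converse follows by reading the same coefficient identification in reverse. The only subtlety is bookkeeping of the edge cases $n=0,1$ under the annihilation operator, which is handled automatically by the convention $a\left|0\right>=0$, so there is no real obstacle beyond careful reindexing.
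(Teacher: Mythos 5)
Your proposal is correct and follows essentially the same route as the paper's proof: termwise differentiation, substitution of the forward Kolmogorov equation, and reindexing via the actions $a^2\left|n\right>=n(n-1)\left|n-2\right>$ and $(a^{\dagger})^2\left|n\right>=\left|n+2\right>$ to match coefficients of each basis vector. The only cosmetic difference is that you expand the operator side and compare to the Kolmogorov side, while the paper rewrites the Kolmogorov side into operator form; the computation is the same.
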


\begin{proof}
Using equation~\eqref{mark11} we compute
\begin{eqnarray}\nonumber
\dfrac{d}{dt}\left|\Psi(t)\right> &=& \sum_{n=0}^{\infty} \dfrac{d P_n(t)}{dt} \left|n\right> \\ \nonumber
&=& \frac{\lambda}{2} \sum_{n=0}^{\infty} (n+2)(n+1)P_{n+2}(t) \left|n\right>
- \frac{\lambda}{2} \sum_{n=0}^{\infty} n(n-1)P_n(t) \left|n\right> \\ \nonumber
&=& \frac{\lambda}{2} \sum_{n=0}^{\infty} P_{n+2}(t) a^2 \left|n+2\right>
- \frac{\lambda}{2} \sum_{n=0}^{\infty} P_n(t) (a^{\dagger})^2 a^2 \left|n\right> \\ \nonumber
&=& \frac{\lambda}{2} \sum_{n=2}^{\infty} P_{n}(t) a^2 \left|n\right>
- \frac{\lambda}{2} \sum_{n=2}^{\infty} P_n(t) (a^{\dagger})^2 a^2 \left|n\right> \\ \nonumber
&=& \frac{\lambda}{2} \sum_{n=0}^{\infty} P_{n}(t) [a^2  - (a^{\dagger})^2 a^2] \left|n\right>  
= \frac{\lambda}{2} [1 - (a^{\dagger})^2]a^2 \left|\Psi(t)\right>.
\end{eqnarray}
\end{proof}

\begin{remark}
If we define $\hat{H}:=\dfrac{\lambda}{2}[1-(a^{\dagger})^2]a^2$ we can rewrite~\eqref{abst2} as
\begin{displaymath}
\dfrac{d}{dt}\left|\Psi(t)\right>=\hat{H} \left|\Psi(t)\right>,
\end{displaymath}
which can be seen as a Schr{\"o}dinger-like formulation. Note that this formulation should be regarded as an evolution in the vector space
$\mathcal{R}$ (or in its proper subspace $\ell^1$~\cite{rogers}), not in a Hilbert space (see the Remark following Lemma~\ref{lemnorm} and the subsequent discussion).
\end{remark}

\subsection{Coherent States}\label{coh}

The next step in the Doi-Peliti formalism is the introduction of the \emph{coherent states}.

\begin{defi}[Coherent states]\label{cs}
For any $\phi \in \left[0,\infty\right)$, we define the coherent state
\begin{equation}\label{coher_state}
\left|\phi\right>:=e^{-\phi} \sum_{n=0}^{\infty}\dfrac{\phi^n}{n!}\left|n\right>.
\end{equation}
\end{defi}

\begin{remark}
Note there is an ambiguity in the notation as $\left|m\right>$, $m \in \mathbb{N}\cup\left\lbrace 0\right\rbrace$, could denote either a coherent state or an element of $\mathcal{B}$;
nonetheless we believe which one we are referring to should be clear from the context.
\end{remark}

\begin{lem}\label{eigenvector}
Coherent states are eigenvectors of the annihilation operator. In particular, the eigenvalue of $\left|\phi\right>$ is $\phi$.
\end{lem}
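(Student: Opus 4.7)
The plan is a direct computation: apply $a$ termwise to the defining series of $|\phi\rangle$, use $a|n\rangle=n|n-1\rangle$ (with $a|0\rangle=0$ to drop the $n=0$ term), and then reindex to recover $\phi$ times the original series.

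Concretely, I would start from Definition~\ref{cs}, pull the scalar $e^{-\phi}$ and the coefficients out, and write
\begin{equation*}
a\left|\phi\right> = e^{-\phi}\sum_{n=0}^{\infty}\frac{\phi^{n}}{n!}\,a\left|n\right>.
\end{equation*}
The $n=0$ term vanishes since $a\left|0\right>=0$, so the sum can be restarted at $n=1$. Applying $a\left|n\right>=n\left|n-1\right>$, the factor $n$ cancels one factor in $n!$, leaving $(n-1)!$ in the denominator. A substitution $m=n-1$ then yields
\begin{equation*}
a\left|\phi\right> = e^{-\phi}\sum_{n=1}^{\infty}\frac{\phi^{n}}{(n-1)!}\left|n-1\right> = \phi\,e^{-\phi}\sum_{m=0}^{\infty}\frac{\phi^{m}}{m!}\left|m\right> = \phi\left|\phi\right>.
\end{equation*}

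There is no real obstacle here: the argument is purely algebraic and termwise, and it does not require any interchange of limits beyond what is already implicit in the definition of $|\phi\rangle$ as an element of $\mathcal{R}$. The only minor care needed is the bookkeeping in dropping the $n=0$ term and shifting the index, which is why I would present the computation in the order above. Convergence is not an issue at the level of this formal manipulation, since $a$ is defined by its action on the basis $\mathcal{B}$ and extended by linearity to series in $\mathcal{R}$; one does not need $|\phi\rangle\in\mathbb{H}$ for the eigenvalue identity itself.
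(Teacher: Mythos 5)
Your computation is correct and matches the paper's proof essentially line for line: both apply $a$ termwise to the defining series, drop the $n=0$ term, cancel the factor $n$ against $n!$, and reindex to obtain $\phi\left|\phi\right>$. Nothing further is needed.
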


\begin{proof}
First, note that all coherent states are elements of the vector space $\mathcal{R}$. Then compute
\begin{equation*}
\begin{aligned}
a\left|\phi\right> &= e^{-\phi} \sum_{n=0}^{\infty}\dfrac{\phi^n}{n!}a\left|n\right>
= e^{-\phi} \sum_{n=1}^{\infty}\dfrac{\phi^n}{(n-1)!} \left|n-1\right> \\ \nonumber
&= \phi \, e^{-\phi} \sum_{n=1}^{\infty}\dfrac{\phi^{n-1}}{(n-1)!} \left|n-1\right>
= \phi \, e^{-\phi} \sum_{n=0}^{\infty}\dfrac{\phi^{n}}{n!} \left|n\right>
&= \phi \left|\phi\right>.
\end{aligned}
\end{equation*}
\end{proof}

The coherent state $\left| \phi \right>$, defined in \eqref{coher_state}, 
represents a Poisson distribution
with parameter $\phi$. At least formally, we can write the integral representation
\begin{equation}\label{prepresentation}
\left| \Psi \right>= \int_{0}^{\infty} \Psi(\phi) \left| \phi \right> d\phi,
\end{equation}
for some amplitude $\Psi(\phi)$,
where the integral has to be regarded as a Pettis integral, i.e.~the duality product induced by the scalar product
does commute with the integral~\cite{talagrand}. This representation could be seen as an expansion of a state vector in terms of all different Poissonians (and hence the name Poisson representation~\cite{gardiner,gardiner0,gardiner1}).
Now consider the vector $\left| \Psi \right>=\sum_{n=0}^\infty P_n \left| n \right>$ to find
\begin{equation}\label{prepresentation2}
P_n=\dfrac{1}{n!}\int_0^{\infty} \phi^n \, e^{-\phi} \, \Psi(\phi) \, d\phi,
\end{equation}
which should be regarded as the coordinatewise meaning of \eqref{prepresentation}.
It is not clear that any vector in $\mathcal{R}$ can be represented in this form.
Our next step will be to show that actually we need to allow $\Psi(\phi)$ to take values in a space of distributions.
Let us remind the reader the definition of $\ell^1$:
\begin{displaymath}
\ell^1 := \left\lbrace \{\alpha_n\}_{n=0}^\infty \in \mathcal{R} \, \left\vert \,\, \sum_{n=0}^\infty  |\alpha_n| <\infty \right. \right\rbrace.
\end{displaymath}
Now define the operator
\begin{eqnarray}\nonumber
T: L^1_+(0,\infty) &\longrightarrow& \ell^1 \\ \nonumber
\Psi(\phi) &\longmapsto&
T[\Psi(\phi)]:= \left\{ \dfrac{1}{n!}\int_0^{\infty} \phi^n \, e^{-\phi} \, \Psi(\phi) \, d\phi \right\}_{n=0}^\infty,
\end{eqnarray}
where $L^1_+(0,\infty)= \{ \Theta \in L^1(0,\infty) : \Theta(\cdot) \ge 0 \}$.
The following result accounts for the properties of $T$.

\begin{prop}\label{propt}
The operator $T$ is well-defined, linear, continuous, and not surjective.
\end{prop}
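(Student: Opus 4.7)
The plan is to prove everything in a single sweep by exploiting the non-negativity of $\Psi$, which collapses absolute values and lets us use Tonelli's theorem freely.

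First, for $\Psi \in L^1_+(0,\infty)$ all the integrands $\phi^n e^{-\phi}\Psi(\phi)$ are non-negative, so each coordinate $a_n := \frac{1}{n!}\int_0^\infty \phi^n e^{-\phi}\Psi(\phi)\,d\phi$ is a well-defined non-negative real number. To see $\{a_n\}\in\ell^1$ and to get the isometry at the same time, I would apply Tonelli to swap the sum and the integral:
\begin{equation*}
\sum_{n=0}^\infty |a_n| \;=\; \sum_{n=0}^\infty \frac{1}{n!}\int_0^\infty \phi^n e^{-\phi}\Psi(\phi)\,d\phi \;=\; \int_0^\infty e^{-\phi}\Psi(\phi)\sum_{n=0}^\infty \frac{\phi^n}{n!}\,d\phi \;=\; \int_0^\infty \Psi(\phi)\,d\phi \;=\; \|\Psi\|_{L^1}.
\end{equation*}
This single computation gives (i) $T[\Psi]\in\ell^1$, hence $T$ is well-defined, and (ii) $\|T[\Psi]\|_{\ell^1}=\|\Psi\|_{L^1}$, i.e. $T$ is isometric. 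Linearity (interpreted as additivity and positive homogeneity on the cone $L^1_+$) is immediate from linearity of the integral; continuity is inherited from the isometry, and so is injectivity (if $T[\Psi]=0$ then $\|\Psi\|_{L^1}=0$).

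The only genuinely separate step is non-surjectivity. My plan is to exhibit a concrete element of $\ell^1$ that cannot lie in the range. The cleanest choice is the sequence $\{P_n\}_{n=0}^\infty$ with $P_0=1$ and $P_n=0$ for every $n\ge 1$, which evidently belongs to $\ell^1$ (and even to the positive cone). Suppose for contradiction there were $\Psi\in L^1_+(0,\infty)$ with $T[\Psi]=\{P_n\}$. Then $P_0=1$ forces $\int_0^\infty e^{-\phi}\Psi(\phi)\,d\phi=1$, so $\Psi$ is not the zero function. On the other hand, $P_1=0$ gives
\begin{equation*}
\int_0^\infty \phi\, e^{-\phi}\Psi(\phi)\,d\phi \;=\; 0,
\end{equation*}
and because the integrand is non-negative and the weight $\phi e^{-\phi}$ is strictly positive on $(0,\infty)$, this forces $\Psi\equiv 0$ a.e., a contradiction.

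I do not expect any real obstacle; the key observation driving every part of the proof is that $\Psi\ge 0$ eliminates cancellations, so absolute summability, isometry and the concrete obstruction to surjectivity all follow from the positivity of the weights $\phi^n e^{-\phi}$ together with Tonelli. The only subtlety worth flagging in the write-up is that $L^1_+$ is only a convex cone, so ``linear'' must be read as additive and positively homogeneous.
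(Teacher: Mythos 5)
Your proof is correct. The first part (well-definedness, isometry via Tonelli, and the attendant continuity and injectivity) is essentially identical to the paper's argument, which also swaps the sum and the integral using monotone convergence and collapses $\sum_n \phi^n/n!$ to $e^{\phi}$; your explicit remark that $L^1_+$ is only a cone, so linearity must be read as additivity plus positive homogeneity, is a fair point that the paper glosses over. Where you genuinely diverge is non-surjectivity. Both you and the paper target the same sequence $e_0=(1,0,0,\dots)$, but the paper runs a comparatively heavy argument: it shows $\int_0^\infty |P(\phi)|e^{-\phi}\Psi_0(\phi)\,d\phi = |P(0)|$ for every polynomial $P$, then invokes the Weierstrass approximation theorem to conclude $\int_0^\infty f(\phi)e^{-\phi}\Psi_0(\phi)\,d\phi = 0$ for all $f\in C_c(\mathbb{R}_+)$, whence $\Psi_0=0$ a.e. You instead observe that the single condition $a_1=\int_0^\infty \phi\,e^{-\phi}\Psi(\phi)\,d\phi=0$, with a non-negative integrand and a weight that is strictly positive on $(0,\infty)$, already forces $\Psi=0$ a.e., contradicting $a_0=1$. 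This is shorter and perfectly valid; note that both routes rely essentially on $\Psi\ge 0$, so neither is more general than the other in that respect. What the paper's longer detour buys is structural information: it effectively identifies the would-be preimage of $e_0$ as the Dirac mass at the origin, which foreshadows the paper's subsequent move to distribution-valued amplitudes $\Psi$; your argument reaches the contradiction faster but does not expose that.
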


\begin{proof}
It is clear that this operator is linear if well-defined. To see it is well-defined and continuous compute
\begin{eqnarray}\nonumber
\Vert T(\Psi) \Vert_{\ell^1} &=& \sum_{n=0}^{\infty} \dfrac{1}{n!}\left\vert\int_0^{\infty} \phi^n \, e^{-\phi} \, \Psi(\phi) \, d\phi \right\vert = \sum_{n=0}^{\infty} \dfrac{1}{n!}\int_0^{\infty} \phi^n \, e^{-\phi} \left\vert \Psi(\phi)\right\vert d\phi \\ \nonumber
&=& \int_0^{\infty} \sum_{n=0}^{\infty} \dfrac{1}{n!} \, \phi^n \, e^{-\phi} \left\vert \Psi(\phi)\right\vert d\phi = \int_0^{\infty} \left\vert \Psi(\phi)\right\vert d\phi = \Vert \Psi \Vert_{L^1} <\infty,
\end{eqnarray}
where we have used the monotone convergence theorem in order to commute the integral and the sum.

To show that $T$ is not surjective, we will show that there is no $L^1_+$ function which image is
$e_0=(1,0,0,0....)$ ($\equiv \left|0\right>$).
Lets proceed by contradiction: suppose that there exists such a function $\Psi_0 \in L^1_+(0,\infty)$ with $T(\Psi_0)=e_0$, then
\begin{displaymath}
\dfrac{1}{n!} \int_0^{\infty} \phi^n \, e^{-\phi} \, \Psi_0(\phi) \, d\phi = \delta_{n 0},
\end{displaymath}
or equivalently
\begin{displaymath}
\int_0^{\infty} \phi^n \, e^{-\phi} \, \Psi_0(\phi) \, d\phi = \delta_{n 0}.
\end{displaymath}
Therefore for any polynomial $P(\phi)$ we have
\begin{displaymath}
\int_0^{\infty} P(\phi) \, e^{-\phi} \, \Psi_0(\phi) \, d\phi = P(0),
\end{displaymath}
which  implies
\begin{displaymath}
\lvert P(0) \rvert \leq \int_0^{\infty} \lvert P(\phi) \rvert \, e^{-\phi} \, \Psi_0(\phi) \, d\phi.
\end{displaymath}
Writing $P(\phi)=\sum_{n=0}^N \alpha_n \phi^n$ for  $\alpha_n \in \mathbb{R}$, yields
\begin{displaymath}
\lvert P(\phi) \rvert \leq \sum_{n=0}^N \lvert \alpha_n \rvert \, \phi^n =: Q(\phi) \,\, \quad  \forall \, \phi \geq 0.
\end{displaymath}
As $Q(\phi)$ is a polynomial and $Q(0) = \lvert \alpha_0 \rvert = \lvert P(0) \rvert $ then, following our assumption,
\begin{displaymath}
\int_0^{\infty} \lvert P(\phi) \rvert \, e^{-\phi} \, \Psi_0(\phi) \, d\phi  \leq \int_0^{\infty} Q(\phi) \, e^{-\phi} \, \Psi_0(\phi) \, d\phi = Q(0) = \lvert P(0) \rvert,
\end{displaymath}
which in turn implies
\begin{equation}\label{P_pol}
\int_0^{\infty} \lvert P(\phi) \rvert \, e^{-\phi} \, \Psi_0(\phi) \, d\phi  = \lvert P(0) \rvert
\end{equation}
for all $P$ polynomials.

Now assume $f(\phi) \in C_c(\mathbb{R}_+)$ and compute
\begin{eqnarray}\nonumber
\int_0^{\infty} f(\phi) \, e^{-\phi} \, \Psi_0(\phi) \, d\phi &=& \int_0^{L} f(\phi) \, e^{-\phi} \, \Psi_0(\phi) \, d\phi \\ \nonumber
&=& \int_0^{L} P(\phi) \, e^{-\phi} \, \Psi_0(\phi) \, d\phi
+ \int_0^{L} \left[ f(\phi)-P(\phi) \right] e^{-\phi} \, \Psi_0(\phi) \, d\phi \\ \nonumber
&\le& \int_0^{L} |P(\phi)| \, e^{-\phi} \, |\Psi_0(\phi)| \, d\phi
+ \int_0^{L} |f(\phi)-P(\phi)| \, e^{-\phi} \, |\Psi_0(\phi)| \, d\phi
\\ \nonumber
&\le& \int_0^{\infty} |P(\phi)| \, e^{-\phi} \, |\Psi_0(\phi)| \, d\phi
+ \int_0^{L} |f(\phi)-P(\phi)| \, e^{-\phi} \, |\Psi_0(\phi)| \, d\phi
\\ \nonumber
&\le& |P(0)| + \|f(\phi)-P(\phi)\|_{L^\infty(0,L)} \|\Psi_0(\phi)\|_{L^1(0,\infty)} \\ \nonumber
&=& |P(0)-f(0)| + \|f(\phi)-P(\phi)\|_{L^\infty(0,L)} \|\Psi_0(\phi)\|_{L^1(0,\infty)},
\end{eqnarray}
where $L>0$ is large enough so $[0,L]$ contains the support of $f(\phi)$ and we have used that $f(0)=0$.
The Weierstrass approximation theorem, together with \eqref{P_pol},  assures us that we can choose a polynomial $P(\phi)$ such that
$$
\int_0^{\infty} f(\phi) \, e^{-\phi} \, \Psi_0(\phi) \, d\phi \le \epsilon \quad \forall \, \epsilon >0.
$$
An analogous argument yields the reversed inequality, so we conclude
$$
\int_0^{\infty} f(\phi) \, e^{-\phi} \, \Psi_0(\phi) \, d\phi = 0,
$$
and this equality holds for any $f(\phi) \in C_c(\mathbb{R}_+)$. This implies that $\Psi_0(\phi)=0$ a.e., and hence a contradiction.
\end{proof}

\begin{remark}
This proposition, in particular, states that the set of $L^1_+(0, \infty)$ functions is not sufficient to describe the space $\ell^1$ completely
via the representation~\eqref{prepresentation}.
\end{remark}

Although we do not have a clear characterization of the Poisson representation we move forward
to introduce time dependence in it:
\begin{displaymath}
\left| \Psi(t) \right> = \int_{0}^{\infty} \Psi(\phi,t) \left| \phi \right> d\phi.
\end{displaymath}
In order to partially characterize the time evolution of this amplitude we need the following technical result.

\begin{lem}
The coherent states fulfill the following  properties
\begin{eqnarray}\nonumber
a\left|\phi\right> &=& \phi \left|\phi\right>, \\ \nonumber
a^{\dagger} \left|\phi\right> &=& \left(1+\dfrac{\partial}{\partial \phi}\right)\left|\phi\right>.
\end{eqnarray}
\end{lem}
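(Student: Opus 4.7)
The plan is to verify both identities by direct coefficientwise computation on the series defining the coherent state, which is legitimate since each coordinate in the basis $\mathcal{B}$ depends on $\phi$ through a smooth scalar function and the sums are finite in each basis slot.

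The first identity $a|\phi\rangle=\phi|\phi\rangle$ is exactly the content of Lemma~\ref{eigenvector}, so nothing new is needed there. For the second identity I would proceed in two parallel computations and then compare. On the one hand, apply $a^\dagger=c$ termwise to the series in Definition~\ref{cs}: since $c|n\rangle=|n+1\rangle$, I get
$$
a^\dagger|\phi\rangle = e^{-\phi}\sum_{n=0}^{\infty}\frac{\phi^n}{n!}\,|n+1\rangle = e^{-\phi}\sum_{m=1}^{\infty}\frac{\phi^{m-1}}{(m-1)!}\,|m\rangle,
$$
after the index shift $m=n+1$.

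On the other hand, interpret $\partial/\partial\phi$ as the operator that differentiates the scalar coefficient in front of each $|n\rangle$. Applying the product rule to $e^{-\phi}\phi^n/n!$ splits $\partial_\phi|\phi\rangle$ into two pieces: a term $-|\phi\rangle$ coming from differentiating $e^{-\phi}$, and a term $e^{-\phi}\sum_{n\ge1}\phi^{n-1}/(n-1)!\,|n\rangle$ coming from differentiating $\phi^n/n!$. Adding $|\phi\rangle$ then exactly cancels the $-|\phi\rangle$ piece, leaving
$$
\left(1+\frac{\partial}{\partial\phi}\right)|\phi\rangle = e^{-\phi}\sum_{n=1}^{\infty}\frac{\phi^{n-1}}{(n-1)!}\,|n\rangle,
$$
which matches the expression obtained for $a^\dagger|\phi\rangle$ above.

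There is no real obstacle here; the only delicate point is that $|\phi\rangle$ lives in the abstract vector space $\mathcal{R}$, so the derivative $\partial/\partial\phi$ must be understood as acting on each scalar coefficient separately rather than in any norm topology (recall the Remark after Lemma~\ref{lemnorm}, where the formal nature of the scalar product is emphasized). Once that interpretation is fixed, termwise differentiation and the termwise action of $c$ commute trivially with the infinite sum because equality of two elements of $\mathcal{R}$ reduces to equality of their coordinates with respect to $\mathcal{B}$, and in each coordinate only finitely many terms contribute.
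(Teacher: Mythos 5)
Your proof is correct and follows essentially the same route as the paper's: a direct termwise computation on the defining series, using the product rule on the coefficients $e^{-\phi}\phi^{n}/n!$ to produce the $\left(1+\partial/\partial\phi\right)$ operator. The only cosmetic difference is that you expand both sides and meet in the middle, whereas the paper runs a single chain of equalities from $a^{\dagger}\left|\phi\right>$ to $\left(1+\partial/\partial\phi\right)\left|\phi\right>$; your closing remark on interpreting $\partial/\partial\phi$ coefficientwise in $\mathcal{R}$ is a sensible clarification the paper leaves implicit.
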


\begin{proof}
The first property was already proven in Lemma~\ref{eigenvector}. To find the second compute
\begin{eqnarray}\nonumber
a^{\dagger}\left|\phi\right> &=& e^{-\phi} \sum_{n=0}^{\infty}\dfrac{\phi^n}{n!}a^{\dagger}\left|n\right> = e^{-\phi} \sum_{n=0}^{\infty}\dfrac{\phi^n}{n!} \left|n+1\right> \\ \nonumber
&=& e^{-\phi} \dfrac{\partial}{\partial \phi} \sum_{n=0}^{\infty}\dfrac{\phi^{n+1}}{(n+1)!}\left|n+1\right> = e^{-\phi} \dfrac{\partial}{\partial \phi} \sum_{n=1}^{\infty}\dfrac{\phi^{n}}{n!}\left|n\right> \\ \nonumber
&=& e^{-\phi} \dfrac{\partial}{\partial \phi} \sum_{n=0}^{\infty}\dfrac{\phi^{n}}{n!}\left|n\right> = \left(1+\dfrac{\partial}{\partial \phi}\right) \left( e^{-\phi} \sum_{n=0}^{\infty}\dfrac{\phi^{n}}{n!}\left|n\right> \right) = \left(1+\dfrac{\partial}{\partial \phi}\right)\left|\phi\right>.
\end{eqnarray}
\end{proof}

Now we are ready to partially characterize the time evolution of the amplitude $\Psi(\phi,t)$ by means of a partial differential equation.

\begin{thm}\label{bfpe}
If $\Psi(\phi,t)$ is a bounded $C^1(0,\infty;C^2(0,\infty))$ solution to the partial differential equation
\begin{equation}\label{bad1}
\dfrac{\partial \Psi}{\partial t}=\dfrac{\lambda}{2}\left(2\dfrac{\partial}{\partial \phi}-\dfrac{\partial^2}{\partial \phi^2}\right)
\left( \phi^2 \Psi \right) \;\; \text{ for } \,\, t>0 \; \text{ and } \;  \phi\in (0, \infty),
\end{equation}
then $P_n(t)$ defined by \eqref{prepresentation2} satisfies~\eqref{mark11}.
\end{thm}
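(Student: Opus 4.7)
The plan is to differentiate $P_n(t)$ directly under the integral sign and then transfer the $\phi$-derivatives in the PDE onto the weight $\phi^n e^{-\phi}$ by integration by parts. The boundedness and $C^1(0,\infty;C^2(0,\infty))$ assumption on $\Psi$ is exactly what is needed to justify both the interchange of $\partial_t$ with the integral (via dominated convergence, since $\phi^n e^{-\phi} \partial_t\Psi$ is bounded by an integrable majorant) and the vanishing of boundary terms in the integration by parts.

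More concretely, writing $f_n(\phi) := \phi^n e^{-\phi}$ and substituting the PDE, I would compute
\begin{equation*}
\frac{d P_n}{dt}
= \frac{1}{n!}\int_0^\infty f_n(\phi)\,\partial_t\Psi\, d\phi
= \frac{\lambda}{2\,n!}\int_0^\infty f_n(\phi)\bigl(2\partial_\phi - \partial_\phi^2\bigr)\bigl(\phi^2\Psi\bigr)\, d\phi,
\end{equation*}
and then integrate by parts once in the first term and twice in the second to move all derivatives onto $f_n$, obtaining
\begin{equation*}
\frac{d P_n}{dt} = \frac{\lambda}{2\,n!}\int_0^\infty \phi^2\,\Psi(\phi,t)\bigl(-2 f_n'(\phi) - f_n''(\phi)\bigr)\, d\phi .
\end{equation*}
The boundary contributions at $\phi=\infty$ vanish because $f_n$ and its derivatives decay exponentially while $\phi^2\Psi$ grows at most polynomially, and the contributions at $\phi=0$ vanish because either $f_n(0)=0$ (for $n\ge1$) or the extra factor $\phi^2$ kills the boundary value.

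A direct computation gives $-2 f_n'(\phi) - f_n''(\phi) = \bigl(\phi^n - n(n-1)\phi^{n-2}\bigr) e^{-\phi}$, so that
\begin{equation*}
\phi^2\bigl(-2 f_n'-f_n''\bigr) = \phi^{n+2} e^{-\phi} - n(n-1)\,\phi^n e^{-\phi} = (n+2)!\,\frac{f_{n+2}}{1} - n(n-1)\,f_n \cdot \text{(const)},
\end{equation*}
which, upon inserting into the integral and using the definition \eqref{prepresentation2}, yields
\begin{equation*}
\frac{d P_n}{dt}
= \frac{\lambda}{2\,n!}\Bigl[(n+2)!\,P_{n+2}(t) - n(n-1)\,n!\,P_n(t)\Bigr]
= \frac{\lambda}{2}\bigl[(n+2)(n+1)P_{n+2}(t) - n(n-1)P_n(t)\bigr],
\end{equation*}
which is precisely \eqref{mark11}.

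The only genuinely delicate step is the justification of the boundary terms and the commutation of derivative with integral; the routine algebra of computing $f_n'$, $f_n''$ and collecting powers of $\phi$ is straightforward. I do not anticipate any other obstacle, since the adjoint relation between the differential operator $(2\partial_\phi-\partial_\phi^2)\phi^2$ acting on $\Psi$ and the discrete operator on the right-hand side of \eqref{mark11} acting on $P_n$ is exactly the structure built into the Poisson kernel $\phi^n e^{-\phi}/n!$.
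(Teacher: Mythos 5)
Your proof is correct and is essentially the computation the paper itself performs: the paper routes the same integration by parts through the coherent-state operator identities of Section 2 and then declares it to hold ``componentwise'' against $\phi^n e^{-\phi}$, whereas you carry out that componentwise step directly and verify the coefficient algebra $-2f_n'-f_n'' = (\phi^n - n(n-1)\phi^{n-2})e^{-\phi}$ explicitly, which is a presentational simplification rather than a different argument, and your treatment of the boundary terms is at the same level of rigor as the paper's. (One cosmetic slip: the intermediate display $(n+2)!\,f_{n+2}/1 - n(n-1)f_n\cdot\mathrm{const}$ is garbled, since $\phi^{n+2}e^{-\phi}$ is simply $f_{n+2}$; but the identity you actually use in the next line, $\int_0^\infty f_{m}\Psi\,d\phi = m!\,P_m$, is correct, so the conclusion is unaffected.)
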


\begin{proof}
From equation~\eqref{prepresentation2} we have
\begin{equation}\nonumber
\dfrac{d}{dt}P_n(t) = \frac{1}{n!} \int_{0}^{\infty} \dfrac{\partial \Psi(\phi,t)}{\partial t} e^{-\phi} \phi^n d\phi, \\ \nonumber
\end{equation}
or alternatively in vector form
\begin{equation}\nonumber
\dfrac{d}{dt}\left|\Psi(t)\right> = \int_{0}^{\infty} \dfrac{\partial \Psi(\phi,t)}{\partial t} \left| \phi \right> d\phi. \\ \nonumber
\end{equation}
By Theorem~\ref{thschrodinger} equation~\eqref{mark11} is equivalent to~\eqref{abst2}, and so
\begin{eqnarray}\nonumber
\dfrac{d}{dt}\left|\Psi(t)\right> &=& \dfrac{\lambda}{2}[1-(a^{\dagger})^2]a^2\left|\Psi(t)\right> = \dfrac{\lambda}{2} \int_{0}^{\infty} \Psi(\phi,t) [1-(a^{\dagger})^2]a^2 \left| \phi \right> d\phi \\ \nonumber
&=& \dfrac{\lambda}{2} \int_{0}^{\infty} \Psi(\phi,t) [1-(a^{\dagger})^2]\phi^2 \left| \phi \right> d\phi
= \dfrac{\lambda}{2} \int_{0}^{\infty} \phi^2 \, \Psi(\phi,t) [1-(a^{\dagger})^2] \left| \phi \right> d\phi \\ \nonumber
&=& -\dfrac{\lambda}{2} \int_{0}^{\infty} \phi^2 \, \Psi(\phi,t) \left( 2\dfrac{\partial}{\partial \phi} + \dfrac{\partial^2}{\partial \phi^2} \right)
\left| \phi \right> d\phi
= \dfrac{\lambda}{2} \int_{0}^{\infty}
\left( 2\dfrac{\partial}{\partial \phi} - \dfrac{\partial^2}{\partial \phi^2} \right) \left[ \phi^2 \, \Psi(\phi,t) \right]
\left| \phi \right> d\phi,
\end{eqnarray}
where the last step has to be understood componentwise, using  the integration by parts,
\begin{equation}\nonumber
-\dfrac{\lambda}{2} \int_{0}^{\infty} \phi^2 \, \Psi(\phi,t) \left( 2\dfrac{\partial}{\partial \phi} + \dfrac{\partial^2}{\partial \phi^2} \right)
\left( \phi^n \, e^{-\phi} \right) d\phi
= \dfrac{\lambda}{2} \int_{0}^{\infty}
\left( 2\dfrac{\partial}{\partial \phi} - \dfrac{\partial^2}{\partial \phi^2} \right) \left[ \phi^2 \, \Psi(\phi,t) \right]
\phi^n \, e^{-\phi} d\phi.
\end{equation}
Therefore
\begin{displaymath}
\int_{0}^{\infty}\left[ \dfrac{\partial \Psi}{\partial t}-\dfrac{\lambda}{2}\left(2\dfrac{\partial}{\partial \phi}-\dfrac{\partial^2}{\partial \phi^2}\right)
\left( \phi^2 \Psi\right) \right] \left|\phi\right> d\phi=0,
\end{displaymath}
i.e.
\begin{displaymath}
\int_{0}^{\infty}\left[ \dfrac{\partial \Psi}{\partial t}-\dfrac{\lambda}{2}\left(2\dfrac{\partial}{\partial \phi}-\dfrac{\partial^2}{\partial \phi^2}\right)
\left( \phi^2 \Psi\right) \right] \phi^n \, e^{-\phi} d\phi=0.
\end{displaymath}
\end{proof}

We can rewrite equation~\eqref{bad1} in the following way
\begin{equation}\label{bad2}
\dfrac{\partial \Psi}{\partial t}=\lambda\left(\dfrac{\partial}{\partial \phi}+\dfrac{i^2}{2}\dfrac{\partial^2}{\partial \phi^2}\right)
\left( \phi^2 \Psi \right),
\end{equation}
where $i$ is the imaginary unit. If we formally regarded this equation as a Fokker-Planck equation, we could be tempted to study
the formally associated SDE
\begin{equation}\label{complex1}
d \phi = - \phi^2 dt + i \, \phi \, dW_t,
\end{equation}
obtained after rescaling time $t \to t/\lambda$. We call this apparently magic step, which we have found nowhere justified within
the framework of probability theory, the \emph{imaginary It\^o interpretation} of equation~\eqref{bad1}.
Before continuing it is important to highlight the following facts:
\begin{itemize}
\item Equation~\eqref{bad1} (in its original formulation or written in the form~\eqref{bad2}) is \emph{not} a Fokker-Planck equation, since it has negative diffusion~\cite{risken}.
\item Equation~\eqref{bad1} is a backward diffusion equation and therefore ill-posed if considered forward in time, at least if the problem
is posed in usual functional spaces~\cite{renar}.
\item Nevertheless~\eqref{bad1} could possibly be considered as a well-posed equation in a suitable distributional space~\cite{aragaosilva}.
This of course would make $\Psi$ distribution-valued and therefore not interpretable as a probability measure.
We illustrate this fact by means of an explicit solution in Appendix~\ref{expformula}.
\end{itemize}
Although it looks like the imaginary It\^o interpretation is a purely formal and possibly ill-defined step,
our present objective is to show that this is \emph{not always} the case and that equation~\eqref{complex1} \emph{could} represent the
Markov process described by~\eqref{mark11} in a certain sense.

\section{Generating Functions: A Classical Approach}
\label{gf}

In this section we consider an alternative approach based on generating functions~\cite{wilf}.
This theory is closely related to the previous one, but perhaps one could say that it uses a more standard mathematical machinery.
First of all we note the following equivalences:
\begin{displaymath}
\left\lbrace
\begin{array}{ccc}
\left|n\right> & \longleftrightarrow & x^n, \\
a & \longleftrightarrow & \dfrac{d}{dx}\left( \cdot \right), \\
a^{\dagger} & \longleftrightarrow & x \times \left( \cdot \right).
\end{array}
\right.
\end{displaymath}
It is clear that this formalism is coherent with the one introduced previously due to the properties:
\begin{displaymath}
\left\lbrace
\begin{array}{l}
\dfrac{d}{dx}x^n=nx^{n-1}, \\
x\cdot x^n=x^{n+1}.
\end{array}
\right.
\end{displaymath}
Also the value for the commutator follows directly
\begin{displaymath}
\left[ \dfrac{d}{dx},x\right]=1.
\end{displaymath}
The ``state'' of our physical system will now be encoded in an analytic function
\begin{displaymath}
G(x):=\sum_{n=0}^\infty P_n \, x^n,
\end{displaymath}
where $P_n$ are the probabilities and hence $P_n \geq 0 $  for all $n \in \mathbb N \cup \{0\}$ and $\sum_{n=0}^\infty P_n=1$.
Clearly, $G \in C^{\omega}\left( -1,1\right)\cap C\left[ -1,1\right]$, i.e. $G$ is analytic in the open interval and continuous in its closure.
It is also possible to consider $G$ as a function of a complex variable $z \in \mathbb{S}^1$ and in that case
$G$ would be a holomorphic function in the complex open unit disk with continuous closure in $\mathbb{S}^1$.

We can move to the time-dependent formalism via the introduction of the time-dependent generating function
\begin{equation}\label{maineq}
G(t,x):=\sum_{n=0}^\infty P_n(t) \, x^n  \quad  \text{ for } \; \;  t\geq 0 \;   \text{ and } \;  x\in \left[ -1,1\right],
\end{equation}
which is an analog of~(\ref{abst1}).

\begin{thm}\label{thm_3}
The time-dependent generating function  \eqref{maineq} satisfies the partial differential equation
\begin{equation}\label{gfpde}
\dfrac{\partial G}{\partial t}=\dfrac{\lambda}{2}\left(1-x^2\right)\dfrac{\partial^2 G}{\partial x^2} \qquad \text{ for } \; t >0 \; \text{ and } \; x\in (-1,1)
\end{equation}
if and only if $P_n(t)$ satisfies the forward Kolmogorov equation~\eqref{mark11}.
\end{thm}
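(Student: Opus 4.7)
My plan is to prove the biconditional by direct computation, exploiting the fact that $G(x,t)$ is a power series with radius of convergence at least $1$ (since $\sum P_n(t) = 1$ and $P_n(t)\ge 0$), which allows term-by-term differentiation inside the open interval $(-1,1)$.

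For the forward direction, I assume $P_n(t)$ satisfies \eqref{mark11} and differentiate \eqref{maineq} termwise in $t$:
\begin{equation*}
\frac{\partial G}{\partial t}(x,t) = \sum_{n=0}^{\infty}\frac{dP_n(t)}{dt}\, x^n
= \frac{\lambda}{2}\sum_{n=0}^{\infty}(n+2)(n+1)P_{n+2}(t)\, x^n
- \frac{\lambda}{2}\sum_{n=0}^{\infty} n(n-1)P_n(t)\, x^n .
\end{equation*}
The first sum, after the index shift $m=n+2$, becomes $\sum_{m=2}^{\infty} m(m-1)P_m(t)\, x^{m-2} = \partial_x^2 G(x,t)$. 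The second sum equals $x^2\sum_{n=2}^{\infty} n(n-1)P_n(t)\, x^{n-2} = x^2\, \partial_x^2 G(x,t)$. Collecting the two contributions yields \eqref{gfpde} exactly.

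For the converse, I would assume $G$ satisfies \eqref{gfpde} and insert its power series expansion into both sides. On the left I would get $\sum_{n\ge0}\dot P_n(t) x^n$, and on the right, after computing $\partial_x^2 G=\sum_{n\ge 2} n(n-1)P_n x^{n-2}$ and re-indexing the first term by $m=n-2$, exactly the series $\tfrac{\lambda}{2}\sum_{n\ge 0}[(n+2)(n+1)P_{n+2}-n(n-1)P_n]x^n$. Since two power series that agree on $(-1,1)$ have identical coefficients, the Kolmogorov equations \eqref{mark11} follow.

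The only subtle points, which I expect to be the main (and really only) obstacle, are the justifications for interchanging differentiation and summation: for $\partial_x^2 G$ this is the standard fact that a power series may be differentiated twice termwise inside its radius of convergence, while for $\partial_t G$ one uses that $\{P_n(t)\}\subset[0,1]$ with $\sum_n P_n(t)=1$, so the series for $G$ and its $t$-derivative converge uniformly on compact subsets of $(-1,1)$ (using the bound $|\dot P_n|\le \lambda(n+2)(n+1)$ from \eqref{mark11}, which still gives a convergent majorant for $|x|<1$). These are routine but should be mentioned to make the termwise identification rigorous.
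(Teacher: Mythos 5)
Your proposal is correct and follows essentially the same route as the paper: termwise differentiation of the power series, the index shift $m=n+2$ to recognize $\partial_x^2 G$, and the identification $\sum_n n(n-1)P_n x^n = x^2\,\partial_x^2 G$, with the paper writing out only the forward chain of equalities and appealing, as you do, to convergence of the differentiated series on $(-1,1)$ to justify the interchanges, while you additionally make the converse explicit via uniqueness of power-series coefficients. One small caveat: in the converse direction the majorant $|\dot P_n|\le \lambda(n+2)(n+1)$ cannot be taken from \eqref{mark11} (that is precisely what is being proved there), so the termwise $t$-differentiation of $G$ must instead be justified from whatever regularity of the $P_n$ one assumes a priori.
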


\begin{proof}
By means of equation~\eqref{mark11} and  for $x\in (-1,1)$ we have
\begin{eqnarray}\nonumber
\dfrac{\partial}{\partial t} G(x,t) &=& \sum_{n=0}^{\infty} \dfrac{d P_n(t)}{dt} \, x^n \\ \nonumber
&=& \frac{\lambda}{2} \sum_{n=0}^{\infty} (n+2)(n+1)P_{n+2}(t) \, x^n
- \frac{\lambda}{2} \sum_{n=0}^{\infty} n(n-1)P_n(t) \, x^n \\ \nonumber
&=& \frac{\lambda}{2} \sum_{n=0}^{\infty} P_{n+2}(t) \, \frac{d^2}{dx^2} \, x^{n+2}
- \frac{\lambda}{2} \sum_{n=0}^{\infty} P_n(t) \, x^2 \frac{d^2}{dx^2} \, x^n \\ \nonumber
&=& \frac{\lambda}{2} \sum_{n=2}^{\infty} P_{n}(t) \, \frac{d^2}{dx^2} \, x^n
- \frac{\lambda}{2} \sum_{n=2}^{\infty} P_n(t) \, x^2 \frac{d^2}{dx^2} \, x^n \\ \nonumber
&=& \frac{\lambda}{2} \sum_{n=0}^{\infty} P_{n}(t) \left(\frac{d^2}{dx^2}  - x^2 \frac{d^2}{dx^2}\right) x^n \\ \nonumber
&=& \frac{\lambda}{2} \left(1 - x^2\right) \frac{\partial^2}{\partial x^2} \sum_{n=0}^{\infty} P_{n}(t) x^n = \frac{\lambda}{2} \left(1 - x^2 \right) \frac{\partial^2}{\partial x^2} G(x,t).
\end{eqnarray}
Notice that  for $x\in (-1,1)$ the series in \eqref{maineq} and the corresponding series for the derivatives with respect to $x$ and $t$ are absolutely convergent and, hence,  we can interchange the order of differentiation and summation.
\end{proof}

\begin{remark}
The result in Theorem~\ref{thm_3} is the analog of the result in Theorem~\ref{thschrodinger}.
This result is extended to any arbitrary reaction in Appendix~\ref{sectgeneral}.
\end{remark}

It is important to note that from the initial conditions for $P_n$, i.e. $P_n(0)$,
we obtain only the initial value $G(0,x) = \sum_{n=0}^\infty P_n(0) \, x^n$
for equation~\eqref{gfpde}, but not the boundary conditions.
The lack of boundary conditions comes from the degeneration of the elliptic operator in~\eqref{gfpde} at the boundary,
which prevents the evolution of the boundary values.
This fact has a probabilistic meaning too, as it encodes the existence of two conserved quantities:
\begin{itemize}
\item \emph{Conservation of probability}: $G(t,1)=1$ for all $t\geq 0$.
\item \emph{Conservation of parity}: $G(t,-1)=\wp$ for all $t\geq 0$, where $\wp = \sum_{n=0}^\infty P_{2n}(0) - \sum_{n=0}^\infty P_{2n+1}(0)$.
\end{itemize}
While the existence of the first conserved quantity is ensured  for every type of reactions, the existence of the second one is a particular
consequence of the structure of the binary annihilation $A + A \to \emptyset$. Its intuitive meaning becomes clear when we consider
an initial condition of the type $\delta_{nm}$, i.e. the initial number of particles is fixed for every realization of the stochastic process.
Then $\wp=1$ if $m$ is even and $\wp=-1$ if $m$ is odd. In the same way, if $P_n(0) >0$ for some even and odd values of $n$, then the probability
of finding an even or odd number of particles at an arbitrary time is the same as initially (provided we assume that $0$ is an even number).

One of the advantages of the formalism of generating functions is that it allows us to recover, in a direct way, the probabilities
\begin{displaymath}
P_n(t)=\dfrac{1}{n!}\dfrac{\partial^n G}{\partial x^n}(t,0),
\end{displaymath}
as well as the cumulants
\begin{displaymath}
\mathbb{E}\left[n(n-1)\cdots(n-m+1)\right](t)=\dfrac{\partial^m G}{\partial x^m}(t,1).
\end{displaymath}
However, in order to connect the theory related to the generating functions to the one described in the previous section, we need to 
define the corresponding coherent generating function.

\begin{defi}[Coherent generating function]\label{def_3}
For any parameter $\phi \in [0,\infty)$, we define the coherent generating function $G: \mathbb{R} \longrightarrow (0, \infty)$ as
\begin{displaymath}
G_{\phi}(x):=e^{\phi (x-1)}.
\end{displaymath}
\end{defi}

\begin{remark}
Notice  the relation between  the definition of  the coherent generating function, Definition~\ref{def_3},  and the definition of coherent states in  Definition~\ref{cs}.
\end{remark}

Then the analog to the representation~\eqref{prepresentation} in the  context of generating functions  is given by
\begin{equation}\label{G_2}
G(t,x)=\int_{0}^{\infty}\Psi(t,\phi) \, e^{\phi (x-1)} \, d\phi \qquad \text{ for } \; t \geq 0 \; \text{and } \;  x\in [-1,1] \text{, with } \Psi(t,\cdot) \in L^1(0,\infty).
\end{equation}
Using  equation~\eqref{gfpde} for $G$ we can  determine the equation  for the amplitude $\Psi$, and hence further illustrate
the form for $G$ considered in~\eqref{G_2}.

\begin{thm}\label{thm_4}
If $\Psi \in C^1(\left[0, \infty\right); C^2(0, \infty) \cap L^1(0,\infty))$, with $\Psi(t,\phi)$ bounded in $\phi$ for all $t$,  is a solution of   equation \eqref{bad1},
then  $G\in C^1(0, \infty; C^{\infty}(-1,1))\cap C([0, \infty); C[-1,1])$, given by \eqref{G_2}, satisfies equation~\eqref{gfpde}.
\end{thm}

\begin{proof} Notice that for $x \in (-1,1)$ the integral in \eqref{G_2}, together with  its derivatives with respect to
$x$ (of first and second order) and $t$ (of first order), are well-defined.
Then formula~\eqref{G_2} implies
\begin{equation}\nonumber
\dfrac{\partial}{\partial t} G(t,x) = \int_{0}^{\infty} \dfrac{\partial \Psi(t,\phi)}{\partial t} \, e^{\phi (x-1)} \, d\phi.
\end{equation}
Since  $\Psi$ is a solution of \eqref{bad1},    we obtain  for $x\in (-1, 1)$ that
\begin{equation}\nonumber
\begin{aligned}
\dfrac{\partial}{\partial t}G(t,x) &=\;  \dfrac{\lambda}{2} \int_{0}^{\infty}
\left( 2\dfrac{\partial}{\partial \phi} - \dfrac{\partial^2}{\partial \phi^2} \right) \left[ \phi^2 \, \Psi(t,\phi) \right]
e^{\phi (x-1)} \, d\phi \\
&= -\dfrac{\lambda}{2} \int_{0}^{\infty} \phi^2 \, \Psi(t,\phi) \left( 2\dfrac{\partial}{\partial \phi} + \dfrac{\partial^2}{\partial \phi^2} \right)
e^{\phi (x-1)} \, d\phi \\
& 
=\;  \dfrac{\lambda}{2} \int_{0}^{\infty} \Psi(t,\phi) \left(1-x^2\right) \phi^2 \, e^{\phi (x-1)} \, d\phi \\
& = \; \dfrac{\lambda}{2} \int_{0}^{\infty} \Psi(t,\phi) \left(1-x^2\right) \frac{\partial^2}{\partial x^2} \, e^{\phi (x-1)} \, d\phi \\
& = \;  \dfrac{\lambda}{2}\left(1-x^2\right) \frac{\partial^2}{\partial x^2} G(t,x).
\end{aligned}
\end{equation}
Notice that for $x\in (-1, 1)$ and $\phi \in [0, \infty)$ we have that the  terms  $\phi^2 \Psi(t,\phi) e^{\phi (x-1)} $, $\phi^2 \partial_\phi \Psi(t,\phi)
e^{\phi (x-1)}$,  $\phi \Psi(t,\phi) e^{\phi (x-1)}$, and $\phi^2 \Psi(t,\phi)(x-1)^2 e^{\phi (x-1)}$  converge to $0$ as $\phi \to +\infty$, for all $t >0$. Hence all boundary terms obtained due to integration by parts vanish.
Regularity for $G$ follows from \eqref{G_2} and the regularity for $\Psi$.
\end{proof}

Notice that despite the singular character
of~\eqref{bad1}, which prevents the construction of a classical existence and uniqueness theory, such a theory can be built for equation~\eqref{gfpde}.
We start by defining the weighted Sobolev space $H_\rho^1(-1,1)$ as
$$
H_\rho^1(-1,1) = \left\{ v \in L^2(-1,1) \; : \; \sqrt{1-x^2} \,  \frac{\partial v}{ \partial x}  \in L^2(-1,1)\right \}.
$$
We first state the uniqueness result.

\begin{thm} Consider   $G_0 \in L^2(-1,1)$ and $\wp \in \mathbb R$.
	There exists at most one solution \\ $G\in C(0, \infty; L^2(-1,1))\cap L^2_{\rm loc}(0, \infty; H_\rho^1(-1,1))$, with  $G(\cdot,1), G(\cdot,-1) \in L^2_{\rm loc} (0, \infty)$, of the problem
	\begin{equation}\label{eq_G_unique}
	\begin{cases}
	\dfrac{\partial G}{\partial t} =\dfrac{\lambda}{2}\left(1-x^2\right)\dfrac{\partial^2 G}{\partial x^2} & \text{ for } \;  t>0 \;  \text{ and } \;  x\in (-1,1),  \\
	G(0,x) =G_0(x) & \text{ for } \; x\in (-1,1), \\
	G(t,1) =1, \quad  G(t,-1) = \wp \quad & \text{ for } \; t >0.
	\end{cases}
	\end{equation}
\end{thm}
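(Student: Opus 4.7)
The plan is a standard energy argument adapted to the degenerate operator $(1-x^2)\partial_x^2$. Let $G_1,G_2$ be two solutions in the stated class with the same data, and set $U:=G_1-G_2$. Then $U$ solves
\begin{equation*}
\partial_t U = \tfrac{\lambda}{2}(1-x^2)\,\partial_x^2 U, \qquad U(\cdot,0)=0, \qquad U(1,t)=U(-1,t)=0,
\end{equation*}
and $U\in C(0,\infty;L^2(-1,1))\cap L^2_{\rm loc}(0,\infty;H_\rho^1(-1,1))$ with $U(\pm 1,\cdot)\in L^2_{\rm loc}(0,\infty)$. My aim is to show $\tfrac{d}{dt}\|U(\cdot,t)\|_{L^2}^2\le 0$ so that the vanishing initial norm forces $U\equiv 0$.

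Formally, I would multiply the equation by $U$, integrate over $(-1,1)$, and use integration by parts twice. The first integration by parts gives
\begin{equation*}
\int_{-1}^{1}(1-x^2)\,U\,\partial_x^2 U\,dx \;=\; \bigl[(1-x^2)U\,\partial_x U\bigr]_{-1}^{1} - \int_{-1}^1 \partial_x\!\bigl[(1-x^2)U\bigr]\partial_x U\,dx,
\end{equation*}
where the boundary term vanishes because $1-x^2$ vanishes at $\pm 1$ (no trace information on $\partial_x U$ is needed). Expanding the remaining integrand and integrating the cross term by parts once more,
\begin{equation*}
2\int_{-1}^1 x\,U\,\partial_x U\,dx \;=\; \bigl[x\,U^2\bigr]_{-1}^{1} - \int_{-1}^1 U^2\,dx \;=\; -\int_{-1}^1 U^2\,dx,
\end{equation*}
where here I crucially use $U(\pm 1,t)=0$. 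Combining yields the key identity
\begin{equation*}
\tfrac{1}{2}\tfrac{d}{dt}\|U\|_{L^2}^2 \;=\; -\tfrac{\lambda}{2}\int_{-1}^1 (1-x^2)(\partial_x U)^2\,dx \;-\;\tfrac{\lambda}{2}\|U\|_{L^2}^2 \;\le\; -\tfrac{\lambda}{2}\|U\|_{L^2}^2,
\end{equation*}
and Gronwall's inequality, together with $\|U(\cdot,0)\|_{L^2}=0$, gives $U\equiv 0$.

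The main technical obstacle is justifying the chain rule for $\tfrac{d}{dt}\|U\|_{L^2}^2$ and the two integrations by parts under the low regularity actually assumed: one only has $\sqrt{1-x^2}\,\partial_x U\in L^2$ and no classical second derivative. I would handle this with a standard Lions-type density argument: test the weak formulation of the PDE against $U$ itself (justified because $U\in L^2_{\rm loc}(0,\infty;H_\rho^1)$ and $\partial_t U$ exists as a distribution in the dual), or equivalently approximate $U$ by mollification in $t$ and by cut-off sequences $\chi_\varepsilon(x)$ supported strictly inside $(-1,1)$, pass to the limit in each integration by parts, and use the assumed $L^2_{\rm loc}$ trace of $U$ at $\pm 1$ to kill the remaining boundary contribution from the cross term. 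The vanishing of the weight $(1-x^2)$ at the endpoints makes the first boundary term harmless, so the only delicate passage to the limit concerns the $[xU^2]_{-1}^1$ term, which is controlled once the trace assumption $U(\pm 1,t)=0$ is invoked in $L^2_{\rm loc}(0,\infty)$.
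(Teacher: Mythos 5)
Your proposal is correct and follows essentially the same route as the paper: an energy estimate for the difference $U=G_1-G_2$, using the identical decomposition $(1-x^2)\partial_x^2 U=\partial_x\bigl((1-x^2)\partial_x U\bigr)+2x\,\partial_x U$ and $2xU\partial_x U=\partial_x(xU^2)-U^2$, with the degenerate weight killing the first boundary term and the zero trace killing the second. The paper states the resulting integrated identity without the mollification/density justification you sketch, so your version is if anything slightly more careful on that point.
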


\begin{proof}
	Assume that  there are two solutions  $G_1$ and $G_2$ of \eqref{eq_G_unique}.  Then  $\overline G = G_1 - G_2$ satisfies
	\begin{equation}\nonumber
	\begin{aligned}
	& \int_{-1}^1|\overline G(\tau,x)|^2 dx +
	\lambda \int_0^\tau \int_{-1}^1 \left( (1-x^2) \big|\partial_x \overline G(t,x)\big|^2 +   |\overline G(t,x)|^2    \right)  dx dt \\
	= & \int_{-1}^1|\overline G(0,x)|^2 dx +  \lambda  \int_0^\tau \left(|\overline G(t,1)|^2  + |\overline G(t,-1)|^2\right) dt
	\end{aligned}
	\end{equation}
	for $\tau \in (0, \infty)$.  Here we used that
	$$
	\begin{aligned}
	(1-x^2)\frac{\partial^2 G}{\partial^2 x} & = \frac{\partial }{\partial x} \left((1- x^2)    \frac{\partial G}{\partial x} \right) + 2 x  \frac{\partial G}{\partial x},
	\\
	2 x  \frac{\partial G}{\partial x} G&  = \frac{\partial }{\partial x} \left( x  |G|^2 \right) - |G|^2.
	\end{aligned}
	$$
	Then $\overline G(0,x) =0$ for $x\in (-1,1)$, and   $\overline G(t,1) =0$,  $\overline G(t,-1) =0$ for   $t>0$ yield
	$$
	\begin{aligned}
	\int_{-1}^1 |\overline G(\tau,x)|^2 dx + \lambda  \int_0^\tau \int_{-1}^1 \left[ (1-x^2) \left|\partial_x \overline G(t,x)\right|^2  +  |\overline G(t,x)|^2 \right] dx dt =0
	\end{aligned}
	$$
	for any $\tau >0$.
	Thus we obtain that  $\overline G(t, x) =0$, and hence $G_1(t,x) = G_2(t,x)$,  for $t\geq 0$ and $x\in [-1,1]$.
\end{proof}

Now we move to the problem of existence.
\begin{thm} Assume that  $G_0 \in L^2(-1,1)$ and $\wp \in \mathbb R$.
	Then there exists a  solution $G\in C(0, \infty; L^2(-1,1))\cap L^2_{\rm loc} (0, \infty; H^1_\rho(-1,1))$, with $\partial_t G \in L^2_{\rm loc} (0,\infty; H^{-1}(-1,1))$,  of  problem \eqref{eq_G_unique}.
	If $G_0 \in H^1(-1,1)$ then  $\partial_x G \in L^2_{\rm loc} (0, \infty; H^1_\rho(-1,1))$,  $G \in C(0, \infty; H^1(-1,1))$, and $\partial_t G \in L^2_{\rm loc} (0, \infty; L^2(-1,1))$.
\end{thm}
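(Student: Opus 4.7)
My plan is to combine a lifting of the inhomogeneous boundary data with a Galerkin approximation in a basis adapted to the degenerate operator. Since both boundary values $G(1,t)=1$ and $G(-1,t)=\wp$ are constant in time and the operator $(1-x^2)\partial_{xx}$ annihilates affine functions, set
$$
g(x):=\frac{1+\wp}{2}+\frac{1-\wp}{2}x, \qquad U:=G-g.
$$
Then $U$ satisfies the same PDE with homogeneous boundary values $U(\pm 1,t)=0$ and initial datum $U_0:=G_0-g\in L^2(-1,1)$, and it is enough to produce $U$ in the correct function spaces.

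To construct $U$ I would fix a countable orthonormal basis $\{w_n\}_{n\geq 1}$ of $L^2(-1,1)$ consisting of smooth functions vanishing at $\pm 1$ (for instance the normalized $\sin(n\pi(x+1)/2)$). Seek $U^m(x,t)=\sum_{n=1}^m c^m_n(t)\,w_n(x)$ solving the projected ODE system, which is linear in $\mathbb{R}^m$ and hence globally well-posed. Testing the projected equation against $U^m$ and integrating by parts twice, the endpoint contributions vanish exactly thanks to $U^m(\pm 1,t)=0$ (these are the same boundary cancellations that appear in the uniqueness identity above), so one obtains the energy identity
$$
\frac{1}{2}\frac{d}{dt}\|U^m(t)\|_{L^2}^2 + \frac{\lambda}{2}\int_{-1}^{1}(1-x^2)\,|\partial_x U^m|^2\,dx + \frac{\lambda}{2}\|U^m(t)\|_{L^2}^2 = 0.
$$
Integrating in time gives the uniform bound $\|U^m\|_{L^\infty(0,\infty;L^2)}+\|U^m\|_{L^2(0,\infty;H^1_\rho)}\leq C\|U_0\|_{L^2}$.

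Next I would pass to the limit. Because the equation is linear, weak-$\ast$ and weak convergence of a subsequence in $L^\infty(0,\infty;L^2)$ and $L^2_{\rm loc}(0,\infty;H^1_\rho)$ respectively are sufficient to pass to the limit in every term of the variational formulation against the basis elements, and a density argument in a test-function space containing $H^1_0(-1,1)\cap H^1_\rho(-1,1)$ upgrades this to a weak solution $U$. Reading $\partial_t U^m$ off the equation and pairing with $H^1_0$-test functions yields a uniform bound for $\partial_t U^m$ in $L^2_{\rm loc}(0,\infty;H^{-1}(-1,1))$, so in the limit $\partial_t U\in L^2_{\rm loc}(0,\infty;H^{-1})$. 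Combined with $U\in L^\infty(0,\infty;L^2)$ this gives $U\in C([0,\infty);L^2(-1,1))$ by a standard time-continuity argument, which lets me identify the initial trace $U(\cdot,0)=U_0$. Setting $G:=U+g$ produces a function in the announced regularity class; its boundary values $G(\pm 1,t)$ match $g(\pm 1)$ because each $U^m$ vanishes at the endpoints, and the equation itself, together with the energy estimate, forces the weak-trace control $G(\pm 1,\cdot)\in L^2_{\rm loc}(0,\infty)$.

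For the improved regularity claim when $G_0\in H^1(-1,1)$, I would apply the same scheme to the spatial derivative $V^m:=\partial_x U^m$, which formally satisfies $\partial_t V^m=\tfrac{\lambda}{2}\bigl[(1-x^2)\partial_{xx}V^m - 2x\,\partial_x V^m\bigr]$. Testing by $V^m$ and integrating by parts, the two boundary contributions cancel exactly against one another (the same cancellation as in the uniqueness proof), leaving
$$
\frac{1}{2}\frac{d}{dt}\|V^m(t)\|_{L^2}^2 + \frac{\lambda}{2}\int_{-1}^{1}(1-x^2)\,|\partial_x V^m|^2\,dx = 0,
$$
hence $\partial_x U\in L^\infty(0,\infty;L^2)\cap L^2_{\rm loc}(0,\infty;H^1_\rho)$. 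Rewriting $(1-x^2)\partial_{xx}U=\partial_x\bigl((1-x^2)\partial_x U\bigr)+2x\,\partial_x U$, both summands lie in $L^2_{\rm loc}(0,\infty;L^2)$, so $\partial_t U\in L^2_{\rm loc}(0,\infty;L^2(-1,1))$ and $U\in C([0,\infty);H^1(-1,1))$. The principal technical obstacle I anticipate is the careful handling of boundary traces in the degenerate space $H^1_\rho$, which in general does not admit pointwise evaluation at $\pm 1$; the way around it is to smuggle the boundary condition into the Galerkin basis (each $w_n$ vanishing at $\pm 1$) and to use the equation to extract enough regularity to make sense of the traces in $L^2_{\rm loc}(0,\infty)$, as the hypothesis already demands.
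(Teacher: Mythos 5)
Your proposal is correct in substance and follows the same backbone as the paper's proof: a Galerkin approximation, the energy identity obtained by testing with the solution (using $(1-x^2)\partial_x^2 G=\partial_x((1-x^2)\partial_x G)+2x\,\partial_x G$ and $2x\,\partial_x G\cdot G=\partial_x(x|G|^2)-|G|^2$), and then differentiation in $x$ and testing with $\partial_x G$ for the improved regularity. Where you genuinely diverge is in the treatment of the inhomogeneous boundary data: the paper works directly with $G$ and carries the boundary contributions $\lambda\int_0^\tau(|G(1,t)|^2+|G(-1,t)|^2)\,dt$ on the right-hand side of the energy identity, controlling them by the assumption $G(\pm 1,\cdot)\in L^2_{\rm loc}(0,\infty)$ with constant values $1$ and $\wp$; you instead lift the data by the affine function $g$, which is annihilated by $(1-x^2)\partial_x^2$, so that $U=G-g$ solves the same equation with homogeneous endpoint values and the energy identity closes with no boundary terms at all. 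Your route is cleaner: it sidesteps the delicate question of endpoint traces for functions that are only in $H^1_\rho$ (where pointwise evaluation at $\pm 1$ is not automatic), and it makes the choice of Galerkin basis (functions vanishing at $\pm1$) compatible with the boundary condition from the start. Two small caveats, both shared with the paper at the same level of informality: in the higher-regularity step the cancellation is not between two boundary terms but between the bulk term $\int 2x\,\partial_x V^m\,V^m\,dx$ produced by integrating the second-order term by parts and the drift $-2x\,\partial_x V^m$ of the differentiated equation tested against $V^m$ (integrating $\int 2x\,\partial_xV^m V^m$ by parts instead would create endpoint terms $|V^m(\pm1)|^2$ that you cannot discard); and $V^m=\partial_x U^m$ is not itself a Galerkin iterate in your sine basis, so to make that estimate rigorous at the approximate level one should either use a basis adapted to the operator (e.g.\ the Jacobi-type polynomials diagonalizing $(1-x^2)\partial_x^2$) or justify the differentiated estimate a posteriori on the limit equation. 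Neither point undermines the argument.
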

\begin{proof}
	Applying the Galerkin method, together with a priori estimates derived below, ensures the existence of a solution $G\in C(0, \infty; L^2(-1,1))\cap L^2(0, \infty; H^1_\rho(-1,1))$ of  problem \eqref{eq_G_unique}.
	
	Considering $G$ as a test function for equation in \eqref{eq_G_unique} we obtain
	\begin{equation}\nonumber
	\begin{aligned}
	& \int_{-1}^1|G(\tau,x)|^2 dx +
	\lambda \int_0^\tau \int_{-1}^1 \left( (1-x^2) \big|\partial_x  G(t,x)\big|^2 +   |G(t,x)|^2    \right)  dx dt \\
	= & \int_{-1}^1|G(0,x)|^2 dx +  \lambda  \int_0^\tau \left(| G(t,1)|^2  + |G(t,-1)|^2\right) dt, \,\, \quad  \text{ for } \tau \in (0, \infty).
	\end{aligned}
	\end{equation}
	Thus the assumptions on initial and boundary conditions ensure
	$$
	\sup\limits_{t \in (0,T)}   \int_{-1}^1|G(t,x)|^2 dx + \int_0^T \int_{-1}^1  (1-x^2) \big|\partial_x  G(t,x)\big|^2 dx dt \leq C
	$$
	for any $T>0$ and a  constant $C>0$.  Then from equation \eqref{eq_G_unique} we also obtain that
	$$
	\|\partial_t G \|_{L^2(0,T; H^{-1}(-1,1))} \leq C.
	$$
	Differentiating the equation in \eqref{eq_G_unique}  with respect to $x$ and taking $\partial_x G$ as a test function we obtain
	$$
	\begin{aligned}
	\int_{-1}^1 |\partial_x G(\tau,x)|^2 dx + \lambda \int_0^\tau \int_{-1}^1 (1-x^2) \big|\partial^2_x  G(t,x)\big|^2 dx dt = \int_{-1}^1 |\partial_x G(0,x)|^2 dx
	\end{aligned}
	$$
	for $\tau>0$.
	
	Then if $G_0 \in H^1(-1,1)$ we obtain $\partial_x G \in L^\infty(0, T; L^2(-1,1))$ and $\sqrt{1-x^2} \, \partial^2_x  G \in L^2(0, T; L^2(-1,1))$ for any $T \in (0, \infty)$. From  equation   in \eqref{eq_G_unique}  we obtain also that   $\partial_t G  \in L^2(0,T; L^2(-1,1))$ and  $\partial_t \partial_x G \in L^2(0, T; (H^1(-1,1))^\prime)$ for all $T >0$.
	Hence $G \in C(0, \infty; H^1(-1,1))$.	
\end{proof}

\section{Imagine the Noise were Real}
\label{rnoise}

In order to perform a step forward towards the understanding of the coherent representation we will analyze simpler reaction schemes that do not produce
an imaginary noise within the framework of coherent state PDEs. We start with the simpler case in which no noise is present and subsequently move to the case of real noise.

\subsection{No Noise}

Consider the abstract reaction
\begin{displaymath}
A\stackrel{\lambda}{\longrightarrow} \varnothing.
\end{displaymath}
The corresponding forward Kolmogorov equation reads
\begin{displaymath}
\dfrac{d P_n}{dt}=\lambda \{ (n+1)P_{n+1}-nP_n \}.
\end{displaymath}
If we introduce the generating function
\begin{displaymath}
G(t,x)=\sum_{n=0}^{\infty} P_n(t) x^n,
\end{displaymath}
it is easy to show that $G$ satisfies the equation
\begin{equation}\label{genvac}
\dfrac{\partial G}{\partial t}=\lambda (1-x)\dfrac{\partial G}{\partial x}.
\end{equation}
Solution of \eqref{genvac}, subject to the initial condition $G(0,x)=G_0(x)$ and the boundary condition $G(t,1)=1$ (that comes from the conservation
of the total probability), reads
\begin{equation*}
G(t,x)=G_0 \left( 1+ (x-1)e^{-\lambda t} \right).
\end{equation*}

On the other hand, the Poisson representation of the generating function is given by
\begin{equation*}
G(t,x)=\int_0^{\infty}\Psi(t,\phi)e^{\phi (x-1)}d\phi.
\end{equation*}
The corresponding equation of motion for the amplitude $\Psi$ reads
\begin{equation}\label{pde1}
\dfrac{\partial \Psi}{\partial t}=\lambda \dfrac{\partial}{\partial \phi}(\phi \Psi).
\end{equation}
This equation can be solved by the method of characteristics, which yields the ODE
\begin{equation}\label{detsde}
\frac{d \phi}{dt}=-\lambda \phi.
\end{equation}
Then the solution to equation~\eqref{pde1} reads
\begin{displaymath}
\Psi(t,\phi)=e^{\lambda t}\Psi_0(\phi e^{\lambda t}),
\end{displaymath}
and thus
\begin{equation}\nonumber
G(t,x)=\int_0^{\infty} e^{\lambda t}\Psi_0(\phi e^{\lambda t})e^{\phi (x-1)}d\phi,
\end{equation}
which yields
\begin{displaymath}
G(t,x)=G_0 \left( 1+ (x-1)e^{-\lambda t} \right),
\end{displaymath}
after taking into account that
\begin{displaymath}
G_0(x)=\int_0^{\infty}\Psi_0(\phi)e^{\phi (x-1)}d\phi.
\end{displaymath}

\begin{remark}
We  finish this subsection with two conclusions:
\begin{itemize}
\item The equivalence of equations~\eqref{genvac} and~\eqref{pde1} suggests the correctness of the procedure.
\item Equation~\eqref{detsde} plays the role of equation~\eqref{complex1} in the previous sections, but in this
case it has being well derived using the method of characteristics.
\end{itemize}
\end{remark}

\subsection{Real Noise}
Consider now the set of reactions
\begin{eqnarray} \nonumber
A &\stackrel{\alpha}{\longrightarrow}& \emptyset, \\ \nonumber
\emptyset &\stackrel{\beta}{\longrightarrow}& A, \\ \nonumber
A &\stackrel{\gamma}{\longrightarrow}& A + A,
\end{eqnarray}
which can be described via the forward Kolmogorov equation
$$
\frac{d P_n}{dt}= \gamma [(n-1)P_{n-1} -n P_n] + \beta(P_{n-1} - P_n) + \alpha [(n+1)P_{n+1} - nP_n].
$$
For the sake of analytical tractability we will make the choice $\alpha=\gamma=\beta$. Then we find the equation
$$
\alpha^{-1} \dfrac{\partial G}{\partial t} = (x-1)G + (x-1)^2\dfrac{\partial G}{\partial x} = (x-1) \dfrac{\partial}{\partial x} \left( (x-1) G \right),
$$
to be solved for the generating function $G$, together with initial condition $G(0,x) = G_0(x)$ and boundary condition $G(t,1)=1$. Solution of this problem can be computed with the method of characteristics and it reads
\begin{equation}\label{genmc2}
G(t,x)= \frac{1}{1- \alpha t (x-1)} \,\, G_0 \left( \frac{x- \alpha t (x-1)}{1- \alpha t (x-1)} \right).
\end{equation}
The amplitude $\Psi$ obeys the equation
\begin{eqnarray}\nonumber
\alpha^{-1} \partial_t \Psi &= \partial_{\phi}(\phi \partial_{\phi} \Psi) & = -\partial_{\phi} \Psi + \partial_{\phi}^2 (\phi \Psi),
\end{eqnarray}
which is the Fokker-Planck equation that corresponds to the SDE
$$
d\phi = \alpha \, dt + \sqrt{2 \alpha \phi}\,dW_t,
$$
the unique solution of which is a time-rescaled Squared Bessel process of dimension $\delta=2$ \cite{jean},
which implies, among other things, that its density $\Psi$ is smooth \cite{chen}.
From the coherent transform we can recover the generating function
\begin{equation}\label{gener_func_2}
G(t,x) = \int_{0}^{\infty} e^{\phi(x-1)}\Psi(t,\phi)d\phi.
\end{equation}
It is important to note that the differential operator $A(\cdot)=\partial_{\phi}[\phi \partial_{\phi}(\cdot)]$ is symmetric.
To see the importance of this fact define $\xi(t,\phi)$ to be the solution of the Cauchy problem
\begin{eqnarray*}
\alpha^{-1} \partial_t\xi(t,\phi)  & = & A(\xi) = \partial_{\phi}(\phi\partial_{\phi}\xi), \\
\xi(0,\phi) & = & e^{\phi(x-1)},
\end{eqnarray*}
which can be solved to yield
\begin{equation}\nonumber
\xi(t,\phi) = \dfrac{1}{1-\alpha t(x-1)}e^{\frac{\phi(x-1)}{1-\alpha t(x-1)}}.
\end{equation}
Now we claim that the integral
\begin{equation}\nonumber
I(t,s) = \int_{0}^{\infty} \xi(t-s,\phi) \Psi(s,\phi) d\phi
\end{equation}
is independent of $s$.

To see this, take the derivative of $I$ with respect to $s$ to obtain
\begin{eqnarray}\nonumber
\alpha^{-1}\dfrac{d}{ds}I(t,s) & = & - \int_{0}^{\infty} \alpha^{-1} \partial_t\xi(t-s,\phi) \Psi(s,\phi) d\phi + \int_{0}^{\infty} \alpha^{-1} \xi(t-s,\phi) \partial_s\Psi(s,\phi) d\phi \\ \nonumber
& = & - \int_{0}^{\infty} \left( A \xi \right)(t-s,\phi) \Psi(s,\phi) d\phi  + \int_{0}^{\infty} \xi(t-s,\phi) \left( A \Psi \right) (s,\phi) d\phi
\\ \nonumber & = & \int_{0}^{\infty} \xi(t-s,\phi) \left( A - A^{T} \right) \Psi(s,\phi) d\phi = 0.
\end{eqnarray}
In particular,
\begin{equation}\nonumber
\int_{0}^{\infty} e^{\phi(x-1)} \Psi(t,\phi) d\phi =  \int_{0}^{\infty}\xi(t,\phi) \Psi_0(\phi) d\phi.
\end{equation}
As a consequence we can compute, using \eqref{gener_func_2},
\begin{eqnarray*}
G(t,x) 
& = & \dfrac{1}{1-\alpha t(x-1)} \int_0^{\infty} e^{\frac{\phi(x-1)}{1-\alpha t(x-1)}}\Psi_0(\phi)d\phi  \\
&=& \dfrac{1}{1-\alpha t(x-1)} \int_0^{\infty} e^{\phi \left( \frac{x-\alpha t(x-1)}{1-\alpha t(x-1)} -1\right)}\Psi_0(\phi)d\phi \\ &=& \dfrac{1}{1-\alpha t(x-1)}G_0\left( \frac{x- \alpha t (x-1)}{1- \alpha t (x-1)} \right),
\end{eqnarray*}
which is in perfect agreement with~\eqref{genmc2}. Note that this last result makes sense even if $\Psi_0$ is not a probability
measure. This again  suggests two conclusions:
\begin{itemize}
\item  The procedure gives again correct results, but the SDE has been derived correctly within the framework
of stochastic analysis.
\item The correctness of the method even for $\Psi_0$ not being a probability measure suggests that the equation
for $\Psi$ is more general than the SDE.
\end{itemize}
A similar derivation, but formal from the viewpoint of our theory, is presented in Appendix~\ref{morern} to illustrate the robustness
of this type of computations.

\section{Imaginary Noise is not Unreal}\label{notunreal}

In this section we point out the fact that there might be a connection between the complex SDE~\eqref{isde} and the solution to~\eqref{master} deeper than the
already stated relation between the respective moments~\cite{gardiner,gardiner0,gardiner1}; see also Lemma~\ref{theo:moments} below.
A generalization of~\eqref{bad1} is the following one-dimensional negative-diffusion Fokker-Planck-like equation
\begin{equation} \nonumber
\dfrac{\partial \Psi}{\partial t}= -\dfrac{\partial}{\partial \phi}\Big( A(\phi) \Psi \Big)-\dfrac{1}{2}\dfrac{\partial^2}{\partial \phi^2}\Big( D^2(\phi) \Psi \Big),
\end{equation}
where $A(\phi)$ and $D(\phi)$ are polynomials. 

As we have already seen, we have to expect a distribution to be the solution of such a partial differential equation.
In order to move forward, it will be more convenient to employ the complex analytic representation of distributions~\cite{debrou}.
Let us denote by $\mathcal{H} \left(\cdot\right)$ the class of holomorphic functions on a given domain, it can then be proved that, see Theorem 2.2.10 in \cite{debrou},

\begin{thm} [Analytic representation of distributions]
For every $\Psi \in C_c^{\infty}(\mathbb{R})'$ there exists a $\left\lbrace \Psi\right\rbrace_a \in \mathcal{H} \left( \mathbb{C}\backslash \mathbb{R} \right)$
such that for all $f \in C_c^{\infty}(\mathbb{R})$,
	\begin{displaymath}
	\left< \Psi | f \right> = \lim\limits_{\phi_2 \rightarrow 0^+ }  \int_{-\infty}^{\infty} \Big(\left\lbrace \Psi\right\rbrace_a (\phi_1 +i \phi_2)- \left\lbrace \Psi\right\rbrace_a(\phi_1 -i \phi_2) \Big) f(\phi_1) d\phi_1,
	\end{displaymath}
where $\left< \cdot | \cdot \right>$ represents the duality product between $C_c^{\infty}(\mathbb{R})$
and $C_c^{\infty}(\mathbb{R})'$.
\end{thm}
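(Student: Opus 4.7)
The plan is to exhibit $\{\Psi\}_a$ as a Cauchy-type transform of $\Psi$ and to recover $\Psi$ as the jump of this holomorphic function across the real axis. The key analytic input is the Poisson-kernel identity
\[
P_{\phi_2}(x) \;=\; \frac{1}{\pi}\,\frac{\phi_2}{x^2+\phi_2^2} \;=\; \frac{1}{2\pi i}\left[\frac{1}{x - i\phi_2} - \frac{1}{x + i\phi_2}\right],
\]
which is an approximate identity as $\phi_2\downarrow 0$, so that $P_{\phi_2} \ast f \to f$ (for $f \in C_c^\infty(\mathbb{R})$) in a topology against which $\Psi$ is continuous.

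First I would settle the compactly supported case $\Psi \in \mathcal{E}'(\mathbb{R})$. Since $t\mapsto (t-z)^{-1}$ is smooth on a neighbourhood of $\operatorname{supp}(\Psi)$ for every $z \notin \mathbb{R}$, the definition
\[
\{\Psi\}_a(z) \;:=\; \frac{1}{2\pi i}\,\langle \Psi_t,\,(t-z)^{-1}\rangle
\]
makes sense, and by differentiating the kernel in $z$ under the duality pairing one obtains a function holomorphic on $\mathbb{C}\setminus\mathbb{R}$. Plugging this into the right-hand side of the claimed identity, interchanging the distributional pairing in $t$ with the integral in $\phi_1$ (justified because $\int f(\phi_1)\,(t-\phi_1 \mp i\phi_2)^{-1}\,d\phi_1$ is a smooth function of $t$ depending continuously on $\phi_2$), and invoking the Poisson identity above rewrites the inner expression as $(P_{\phi_2} \ast f)(t)$. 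Sending $\phi_2\downarrow 0$ and using continuity of $\Psi$ on $C^\infty$ yields the jump formula with value $\langle\Psi|f\rangle$.

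For general $\Psi \in C_c^\infty(\mathbb{R})'$ I would localise: given $f$, pick $\chi \in C_c^\infty(\mathbb{R})$ equal to $1$ on a neighbourhood of $\operatorname{supp}(f)$ and split $\Psi = \chi\Psi + (1-\chi)\Psi$. The first summand is compactly supported, so the previous paragraph produces an analytic representative whose jump against $f$ recovers $\langle\chi\Psi|f\rangle = \langle\Psi|f\rangle$. The second summand has support disjoint from $\operatorname{supp}(f)$; its analytic representative, obtained by the standard Tillmann--Bremermann patching with cutoff kernels, extends holomorphically across $\operatorname{supp}(f)$, so its contribution to the jump integral against $f$ vanishes. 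Summing the two yields a global $\{\Psi\}_a \in \mathcal{H}(\mathbb{C}\setminus\mathbb{R})$ with the required property.

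The main obstacle is precisely this last step: for a $\Psi$ of non-compact support one cannot pair with $(t-z)^{-1}$ directly, and the naive prescription fails to produce a well-defined holomorphic function of $z$. The correct construction requires a locally finite partition of unity together with holomorphic corrections that render the global object well-defined off $\mathbb{R}$, and one must verify both that different choices of cutoffs differ only by functions extending analytically across $\mathbb{R}$ (so the jump is unambiguous) and that the resulting $\{\Psi\}_a$ is holomorphic on all of $\mathbb{C}\setminus\mathbb{R}$. This is the technical heart of the statement and is carried out in \cite{debrou}.
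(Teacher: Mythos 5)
The paper does not prove this statement at all: it is quoted verbatim as Theorem 2.2.10 of \cite{debrou} and used as an imported black box, so there is no in-paper argument to compare yours against. Judged on its own terms, your sketch is the standard proof and the part you actually carry out is correct: for $\Psi\in\mathcal{E}'(\mathbb{R})$ the Cauchy transform $\frac{1}{2\pi i}\langle\Psi_t,(t-z)^{-1}\rangle$ is well defined and holomorphic off $\mathbb{R}$, the jump $\{\Psi\}_a(\phi_1+i\phi_2)-\{\Psi\}_a(\phi_1-i\phi_2)$ is exactly the Poisson kernel $P_{\phi_2}(t-\phi_1)$ paired with $\Psi$, the interchange of the pairing with the $\phi_1$-integral is legitimate because the integral converges in $C^\infty(K)$, and $P_{\phi_2}\ast f\to f$ together with all derivatives uniformly on compacts, which is the right topology for continuity of a compactly supported distribution. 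This matches the Cauchy representation the paper later adopts in Definition 4, so your construction is the one the authors actually use downstream.

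The one genuine gap is the one you flag yourself: for $\Psi\in C_c^\infty(\mathbb{R})'$ of unbounded support the theorem demands a \emph{single} $\{\Psi\}_a\in\mathcal{H}(\mathbb{C}\setminus\mathbb{R})$ valid for every $f$, whereas your localisation $\Psi=\chi\Psi+(1-\chi)\Psi$ produces a decomposition depending on $f$, and $(1-\chi)\Psi$ is itself non-compactly supported, so its ``analytic representative'' is not yet defined by your first step. The standard repair is a locally finite partition of unity $\sum_j\chi_j=1$, Cauchy transforms $F_j$ of $\chi_j\Psi$, and Mittag-Leffler-type subtraction of finitely many terms of the expansion of each $F_j$ about a point away from $\operatorname{supp}\chi_j$ so that the corrected series converges locally uniformly on $\mathbb{C}\setminus\mathbb{R}$; the corrections are entire, hence invisible in the jump, and near any fixed $\operatorname{supp}f$ the global object differs from $\{\chi\Psi\}_a$ by a function holomorphic across a real neighbourhood of $\operatorname{supp}f$, so the compactly supported computation gives the claim. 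You name this machinery and defer it to \cite{debrou} rather than executing it; since the paper defers the entire theorem to the same source, this is a reasonable stopping point, but be aware that the convergence-producing corrections and the well-definedness of the jump modulo $\mathcal{H}(\mathbb{C})$ are the actual content of the theorem, not a routine verification.
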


Note that this representation is not unique as any $\left\lbrace \Psi\right\rbrace_a \in \mathcal{H} \left( \mathbb{C} \right)$ leads to the trivial distribution, see e.g.~\cite{kothe}.
We focus now on the following analytical representation.

\begin{defi}[Cauchy representation]\label{cauchyr}
For every $\Psi \in C^{\infty}(\mathbb{R})'$ we define its Cauchy representation as
	\begin{displaymath}
	\left\lbrace \Psi\right\rbrace_a(\phi):=\dfrac{1}{2\pi i}\left< \Psi(s) \Bigg| \dfrac{1}{s-\phi} \right>_s,
	\end{displaymath}
	where $\phi \in \mathbb{C}\backslash \mathbb{R}$.
\end{defi}

\begin{remark}
It is possible to prove that $\left\lbrace \Psi\right\rbrace_a(\phi)$ is always well-defined in $\mathcal{H} \left( \mathbb{C}\backslash \mathbb{R} \right)$~\cite{debrou}.
Whenever $\Psi \in C_c(\mathbb{R})$ its Cauchy representation can be written as the integral
\begin{displaymath}
\left\lbrace \Psi\right\rbrace_a(\phi)=\dfrac{1}{2\pi i}\int_{-\infty}^{+\infty} \dfrac{\Psi(s)}{s-\phi}ds.
\end{displaymath}
Clearly, when $\Psi \in C^\infty \left( \mathbb{R} \right)' \backslash C_c(\mathbb{R})$,
the duality product in Definition~\ref{cauchyr} is well defined, but the integral is not necessarily so.
\end{remark}

A paradigmatic example of Cauchy representation is that of the $n-$th derivative of the Dirac delta:
$$
\mathrm{If} \quad \Psi=\delta_0^{(n)} \quad \mathrm{then} \quad \left\lbrace \Psi\right\rbrace_a(\phi)=\dfrac{1}{2 \pi i }\dfrac{(-1)^{n+1} n!}{\phi^{n+1}}.
$$

\begin{prop}\label{propcr}
Let $\{\Psi\}_a \in \mathcal{H} \left( \mathbb{C}\backslash \mathbb{R} \right)$ be the Cauchy representation of a
(compactly-supported) distribution $\Psi \in C^\infty \left( \mathbb{R} \right)'$. Then:
	\begin{itemize}
		\item $\left\{\mathcal{P}(s) \Psi(s) \right\} _a= \mathcal{P}(\phi) \{\Psi\}_a(\phi)$ in
$\mathcal{H} \left( \mathbb{C}\backslash \mathbb{R} \right)/\mathcal{H} \left( \mathbb{C}\right)$ and
		\item $\left\{ \dfrac{\partial^m \Psi(s)}{\partial s^m}  \right\}_a=\dfrac{\partial^m \{\Psi(\phi)\}_a}{\partial \phi^m}$ in
$\mathcal{H} \left( \mathbb{C}\backslash \mathbb{R} \right)/\mathcal{H} \left( \mathbb{C}\right)$,
	\end{itemize}
where $\phi \in \mathbb{C}\backslash \mathbb{R}$, $s \in \mathbb{R}$, $\mathcal{P}$ is an arbitrary polynomial, and $m$ an arbitrary positive integer.
\end{prop}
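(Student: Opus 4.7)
My plan is to prove both identities by moving the operations (multiplication by a polynomial, differentiation) through the duality pairing that defines the Cauchy representation, and to account for the ``mod $\mathcal{H}(\mathbb{C})$'' quotient by isolating an explicit entire remainder.

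For the polynomial identity, I would start from
\[
\{\mathcal{P}(s)\Psi(s)\}_a(\phi)=\frac{1}{2\pi i}\left\langle \mathcal{P}(s)\Psi(s)\,\Big|\,\frac{-1}{\phi-s}\right\rangle_s=\frac{1}{2\pi i}\left\langle \Psi(s)\,\Big|\,\frac{-\mathcal{P}(s)}{\phi-s}\right\rangle_s,
\]
using the standard rule that multiplication of a distribution by a smooth function (here a polynomial in $s$) dualises to multiplication of the test expression by the same function. The key algebraic step is the polynomial division
\[
\mathcal{P}(s)=\mathcal{P}(\phi)+(s-\phi)R(s,\phi),
\]
where $R(s,\phi)=\sum_{k}r_k(\phi)\,s^{k}$ is a polynomial in both variables. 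Substituting, one gets
\[
\{\mathcal{P}(s)\Psi(s)\}_a(\phi)=\mathcal{P}(\phi)\,\{\Psi\}_a(\phi)+\frac{1}{2\pi i}\bigl\langle\Psi(s)\,\bigm|\,R(s,\phi)\bigr\rangle_s.
\]
The remainder $\phi\mapsto\langle\Psi(s),R(s,\phi)\rangle_s=\sum_k r_k(\phi)\,\langle\Psi,s^k\rangle$ is a polynomial in $\phi$, hence entire, which is exactly what it means for the first identity to hold in $\mathcal{H}(\mathbb{C}\setminus\mathbb{R})/\mathcal{H}(\mathbb{C})$.

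For the derivative identity, I would use the distributional definition $\langle \partial_s^m \Psi, f\rangle_s=(-1)^m\langle\Psi,\partial_s^m f\rangle_s$ together with the elementary observation
\[
\partial_s^{m}\!\left(\frac{-1}{\phi-s}\right)=\frac{-m!}{(\phi-s)^{m+1}}=(-1)^{m}\,\partial_\phi^{m}\!\left(\frac{-1}{\phi-s}\right).
\]
Combining both signs yields
\[
\{\partial_s^{m}\Psi\}_a(\phi)=\frac{1}{2\pi i}\,\partial_\phi^{m}\left\langle\Psi(s)\,\Bigm|\,\frac{-1}{\phi-s}\right\rangle_s=\partial_\phi^{m}\{\Psi\}_a(\phi),
\]
where the exchange of $\partial_\phi^m$ with the duality pairing is legitimate because $\Psi$ has compact support and the test $(\phi,s)\mapsto -1/(\phi-s)$ is jointly $C^{\infty}$ on $(\mathbb{C}\setminus\mathbb{R})\times\mathrm{supp}(\Psi)$. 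In fact this yields an exact equality, which is a fortiori an equality in the quotient by $\mathcal{H}(\mathbb{C})$.

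The only genuine technical point, and the step I expect to require the most care, is the justification of the identity $\langle \mathcal{P}(s)\Psi,f\rangle_s=\langle\Psi,\mathcal{P}f\rangle_s$ and of the differentiation under the pairing when the ``test function'' is the non-compactly-supported kernel $s\mapsto -1/(\phi-s)$. Both are standard once one invokes the compact support of $\Psi$ (so $\Psi$ extends to $\mathcal{E}'(\mathbb{R})$ and pairs with arbitrary $C^{\infty}$ functions of $s$, with continuous dependence on parameters) and notes that $\mathcal{P}(s)/(s-\phi)$ and its $\phi$-derivatives are smooth in $s$ on $\mathrm{supp}(\Psi)$ and depend holomorphically on $\phi\in\mathbb{C}\setminus\mathbb{R}$. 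Everything else is direct computation.
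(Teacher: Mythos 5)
Your proposal is correct and follows essentially the same route as the paper: the polynomial identity rests on the same division $\mathcal{P}(s)=\mathcal{P}(\phi)+(s-\phi)R(s,\phi)$ (the paper does it monomial by monomial, writing $\phi^n/(s-\phi)$ as $\bigl(s^n-(s^n-\phi^n)\bigr)/(s-\phi)$, which is the same decomposition), and the derivative identity rests on the same observation $\partial_\phi (s-\phi)^{-1}=-\partial_s (s-\phi)^{-1}$ (the paper proceeds by induction on $m$ where you differentiate $m$ times at once). Your explicit identification of the entire remainder as a polynomial in $\phi$ and your remark on pairing a compactly supported distribution with the non-compactly-supported kernel match the paper's treatment.
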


\begin{proof}
It is clear that all expressions in the statement are well-defined. Now, to prove the first property  it is enough to show that the equality is
true for $\mathcal{P}$ being an arbitrary monomial, say $\phi^n$. The case $n=0$ is trivial, for $n=1$ compute
		\begin{eqnarray}\nonumber
		\phi \{\Psi\}_a(\phi) &=& \dfrac{1}{2\pi i}\left<  \Psi(s) \Bigg| \dfrac{\phi}{s-\phi} \right>_s = \dfrac{1}{2\pi i} \left< \Psi(s) \Bigg| \dfrac{s - (s - \phi)}{s-\phi} \right>_s \\ \nonumber
		&=& \dfrac{1}{2\pi i} \left< s\Psi(s) \Bigg| \dfrac{1}{s-\phi} \right>_s - \dfrac{1}{2\pi i}\left< \Psi(s) \Bigg| 1 \right>_s\\ \nonumber
		&=& \{s \Psi(s)\}_a-\underbrace{\dfrac{1}{2\pi i} \left< \Psi(s) \Bigg| 1 \right>_s}_{\in \mathcal{H} \left( \mathbb{C}\right)}.
		\end{eqnarray}
The case $n>1$ follows from the following computations:
        \begin{eqnarray}\nonumber
        \phi^n \{\Psi\}_a(\phi) &=& \dfrac{1}{2\pi i}\left<  \Psi(s) \Bigg| \dfrac{\phi^n}{s-\phi} \right>_s = \dfrac{1}{2\pi i}\left<  \Psi(s) \Bigg| \dfrac{s^n - (s^n -\phi^n)}{s-\phi} \right>_s \\ \nonumber
        &=& \dfrac{1}{2\pi i}\left<  s^n\Psi(s) \Bigg| \dfrac{1}{s-\phi} \right>_s
        -\dfrac{1}{2\pi i} \left< \Psi(s) \Bigg| \dfrac{s^n - \phi^n}{s-\phi} \right>_s
        \\ \nonumber
        &=& \{s^n \Psi(s)\}_a
        -\underbrace{\dfrac{1}{2\pi i} \sum_{m=0}^{n-1} \phi^{n-1-m} \left<  \Psi(s) \Bigg| s^m \right>_s }_{\in \mathcal{H} \left( \mathbb{C}\right)}.
        \end{eqnarray}
To prove the second property we proceed by induction, commencing with the case $m=1$:
		\begin{eqnarray}\nonumber
		\dfrac{\partial}{\partial \phi} \{\Psi\}_a(\phi) &=&
        \dfrac{1}{2\pi i} \left<  \Psi(s) \Bigg| \dfrac{ \partial }{\partial \phi}\dfrac{1}{s-\phi} \right>_s =
        \dfrac{-1}{2\pi i} \left<  \Psi(s) \Bigg| \dfrac{ \partial }{\partial s}\dfrac{1}{s-\phi} \right>_s \\ \nonumber
        &=& \dfrac{1}{2\pi i}\left<  \Psi'(s) \Bigg| \dfrac{1}{s-\phi} \right>_s = \left\{\frac{\partial \Psi(s)}{\partial s} \right\}_a.
		\end{eqnarray}
For the general case we use the induction hypothesis to find
\begin{eqnarray}\nonumber
		\dfrac{\partial^{m+1}}{\partial \phi^{m+1}} \{\Psi\}_a(\phi) &=&
        \dfrac{1}{2\pi i}\left<  \dfrac{\partial^{m} \Psi(s)}{\partial s^{m}} \Bigg| \dfrac{\partial }{\partial \phi}\dfrac{1}{s-\phi} \right>_s
        = \dfrac{-1}{2\pi i}\left<  \dfrac{\partial^{m} \Psi(s)}{\partial s^{m}} \Bigg| \dfrac{ \partial }{\partial s}\dfrac{1}{s-\phi} \right>_s
        \\ \nonumber
        &=& \dfrac{1}{2\pi i}\left<  \dfrac{\partial^{m+1} \Psi(s)}{\partial s^{m+1}} \Bigg| \dfrac{1}{s-\phi} \right>_s
        = \left\{\dfrac{\partial^{m+1} \Psi(s)}{\partial s^{m+1}} \right\}_a.
\end{eqnarray}
\end{proof}

From now one we will always assume the initial condition $\Psi_0 \in C^\infty(\mathbb{R})'$.

\begin{cor}
Let $A(\cdot)$ and $D(\cdot)$ be polynomials and let $\Psi$ be a $C^1([0,T],C^\infty(\mathbb{R})')$ solution, for some $T>0$, to
\begin{equation}\label{distpde}
\begin{cases}
\dfrac{\partial \Psi}{\partial t} = -\dfrac{\partial}{\partial \phi}\left[ A(\phi) \Psi \right]-\dfrac{1}{2}\dfrac{\partial^2}{\partial \phi^2}\left[ D^2(\phi) \Psi \right], \\
\Psi(0,\phi) = \Psi_0(\phi), \quad \phi \in \mathbb{R},
\end{cases}
\end{equation}
then its Cauchy representation $\{\Psi\}_a$ is a
$C^1([0,T],\mathcal{H} \left( \mathbb{C}\backslash \mathbb{R} \right)/\mathcal{H} \left( \mathbb{C}\right))$
solution to
\begin{equation}\label{distpdecomplex0}
\begin{cases}
\dfrac{\partial \{\Psi\}_a}{\partial t}= -\dfrac{\partial}{\partial \phi}\left[ A(\phi) \{\Psi\}_a \right]-\dfrac{1}{2}\dfrac{\partial^2}{\partial \phi^2}
\left[ D^2(\phi) \{\Psi\}_a \right], \\
\{\Psi\}_a(0,\phi)=\{\Psi_0\}_a(\phi), \quad \phi \in \mathbb{C}.
\end{cases}
\end{equation}
\end{cor}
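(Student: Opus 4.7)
The plan is to apply the Cauchy representation $\{\cdot\}_a$ to both sides of the evolution equation for $\Psi$ and then invoke Proposition \ref{propcr} to move the polynomial coefficients and spatial derivatives past the representation, with all equalities interpreted in the quotient space $\mathcal{H}(\mathbb{C}\setminus\mathbb{R})/\mathcal{H}(\mathbb{C})$.

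First I would handle the time derivative. For each fixed $\phi \in \mathbb{C}\setminus\mathbb{R}$, the test function $s \mapsto -1/[2\pi i (\phi - s)]$ is a smooth, $t$-independent element of $C^\infty(\mathbb{R})$. Since $\Psi \in C^1([0,T], C^\infty(\mathbb{R})')$, differentiating the duality pairing under $t$ is justified and gives
\begin{equation*}
\partial_t \{\Psi\}_a(\phi, t) \;=\; \frac{1}{2\pi i}\left\langle \partial_t \Psi(\cdot, t) \,\Big|\, \frac{-1}{\phi-s}\right\rangle_s \;=\; \left\{\partial_t \Psi(\cdot, t)\right\}_a(\phi).
\end{equation*}
Moreover $\{\cdot\}_a$ inherits the $C^1$-in-$t$ regularity with values in $\mathcal{H}(\mathbb{C}\setminus\mathbb{R})$, hence also in the quotient space.

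Next, using linearity of the duality pairing, $\{\cdot\}_a$ distributes over the right-hand side. Applying the two statements of Proposition \ref{propcr} in turn (first commuting $\partial_\phi$ and $\partial^2_\phi$ through, then commuting multiplication by the polynomials $A(\phi)$ and $D^2(\phi)$ through), I obtain
\begin{equation*}
\left\{\partial_\phi[A(\phi)\Psi]\right\}_a \;=\; \partial_\phi \left[A(\phi)\{\Psi\}_a\right], \qquad \left\{\partial^2_\phi[D^2(\phi)\Psi]\right\}_a \;=\; \partial^2_\phi\left[D^2(\phi)\{\Psi\}_a\right],
\end{equation*}
where both identities are valid in $\mathcal{H}(\mathbb{C}\setminus\mathbb{R})/\mathcal{H}(\mathbb{C})$. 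Combining these with the preceding identity for $\partial_t$ produces the claimed PDE for $\{\Psi\}_a$ in the quotient space. The initial condition follows at once from evaluation at $t=0$, since $\{\cdot\}_a$ is continuous in $t$ and $\Psi(\cdot,0)=\Psi_0$.

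The main subtlety is the careful bookkeeping of the entire-function residues. Each invocation of the polynomial property in Proposition \ref{propcr} introduces a $t$-dependent holomorphic function on all of $\mathbb{C}$, and iterating this twice through $\partial_\phi^2$ and once through $\partial_\phi$ generates several such terms. Working in the quotient $\mathcal{H}(\mathbb{C}\setminus\mathbb{R})/\mathcal{H}(\mathbb{C})$ is precisely what allows all of these corrections to be absorbed, yielding a clean PDE identity; this is the reason the statement is formulated in the quotient rather than in $\mathcal{H}(\mathbb{C}\setminus\mathbb{R})$ itself. The only other technical point — exchanging $\partial_t$ with the Cauchy pairing — is standard for $C^1$ distribution-valued maps and causes no difficulty.
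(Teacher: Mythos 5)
Your argument is correct and is exactly the route the paper takes: the paper's proof consists of the single sentence that the corollary is a direct consequence of Proposition~\ref{propcr}, and your write-up simply makes explicit the two steps that sentence compresses (commuting $\partial_t$ with the duality pairing using the $C^1$ regularity, and commuting polynomial multiplication and $\phi$-derivatives through $\{\cdot\}_a$ modulo entire functions). Your remark that the quotient by $\mathcal{H}(\mathbb{C})$ is what absorbs the accumulated entire-function corrections is the right observation and matches the reason the statement is formulated in that quotient.
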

\begin{proof}
The statement is a direct consequence of Proposition~\ref{propcr}.
\end{proof}

We have already connected distribution-valued solutions to the negative-diffusion PDE \eqref{distpde}
with solutions to the complex PDE \eqref{distpdecomplex0}
via the Cauchy representation $\{\cdot\}_a$. The imaginary-noise SDE formally associated to problem~\eqref{distpdecomplex0} is
	\begin{equation}\label{complexsdedist}
	dz=A(z)dt+iD(z)dW_t,
	\end{equation}
	where $z \in \mathbb{C}$. And this SDE is in turn associated with the real two-dimensional Fokker-Planck equation
	\begin{equation}\label{realfokker0}
	\dfrac{\partial }{\partial t} P=-\dfrac{\partial}{\partial z_1}\left[ A_1 P\right] - \dfrac{\partial}{\partial z_2}\left[ A_2 P\right] +\dfrac{1}{2}\dfrac{\partial^2}{\partial z_1^2}\left[ D_2^2 P\right] +\dfrac{1}{2}\dfrac{\partial^2}{\partial z_2^2}\left[ D_1^2 P\right]
-\dfrac{\partial^2}{\partial z_1\partial z_2 }\left[ D_1 D_2 P\right],
	\end{equation}
where $A_1(z_1,z_2)=\Re[A(z_1 + i z_2)]$, $A_2(z_1,z_2)=\Im[A(z_1 + i z_2)]$,
$D_1(z_1,z_2)=\Re[D(z_1 + i z_2)]$, and $D_2(z_1,z_2)=\Im[D(z_1 + i z_2)]$.
The following result shows how to connect~\eqref{distpde} with~\eqref{complexsdedist} through~\eqref{distpdecomplex0} and~\eqref{realfokker0}.

\begin{thm} \label{connection}
	Let $\Psi$ be a $C^1([0,T],C^\infty(\mathbb{R})')$ solution to
	\eqref{distpde} .
Then its Cauchy representation can be expressed as
	\begin{equation}\nonumber
	\{\Psi\}_a(t, \phi)=\dfrac{1}{2\pi i} \left< \Psi_0(x_0) \Bigg| \int_{-\infty}^{+\infty} \int_{-\infty}^{+\infty}
	\dfrac{P(t, z_1,z_2)}{z_1 + i z_2-\phi}
	dz_1 dz_2\right>_{x_0}
	\end{equation}
for $\phi \in \mathbb C \setminus \overline{\text{supp}(P) \cup \mathbb{R}}$, if there exists a unique compactly-supported solution $P \in C^1([0,T],\mathcal{M}(\mathbb{R}^2))$  to  \eqref{realfokker0} with initial condition
$$
P(0, z_1,z_2)=\delta(z_1-x_0) \delta(z_2),
$$
where
 $\mathcal{M}(\mathbb{R}^2)$
is the space of all probability measures over $\mathbb{R}^2$.
\end{thm}

\begin{proof}
We start with the initial condition:
    \begin{eqnarray}\nonumber
	\left\lbrace \Psi\right\rbrace_a(0,\phi)  &=& \dfrac{1}{2\pi i}\left< \Psi_0(x_0) \Bigg| \int_{-\infty}^{+\infty} \int_{-\infty}^{+\infty}
	\dfrac{P(0,z_1,z_2)}{z_1 + i z_2-\phi}
	dz_1 dz_2\right>_{x_0} \\ \nonumber
    &=& \dfrac{1}{2\pi i}\left< \Psi_0(x_0) \Bigg| \int_{-\infty}^{+\infty} \int_{-\infty}^{+\infty}
    \dfrac{\delta(z_1-x_0) \delta(z_2)}{z_1 + i z_2-\phi}
    dz_1 dz_2\right>_{x_0} \\ \nonumber
    &=& \dfrac{1}{2\pi i} \left< \Psi_0(x_0) \Bigg|
    \dfrac{1}{x_0-\phi} \right>_{x_0}.
	\end{eqnarray}
Using the relations
	\begin{displaymath}
	\dfrac{\partial}{\partial z_1} \dfrac{1}{z_1 + i z_2 -\phi}=-\dfrac{\partial}{\partial \phi}\dfrac{1}{z_1 + i z_2 -\phi}
	\end{displaymath}
	and
	\begin{displaymath}
	i \dfrac{\partial}{\partial z_2} \dfrac{1}{z_1 + i z_2 -\phi}=\dfrac{\partial}{\partial \phi}\dfrac{1}{z_1 + i z_2 -\phi},
	\end{displaymath}	
together with equation \eqref{realfokker0} for $P$, we find
    \begin{eqnarray}\nonumber
	\dfrac{\partial}{\partial t}\left\lbrace \Psi\right\rbrace_a(t,\phi) &=&
    \dfrac{1}{2\pi i}\left< \Psi_0(x_0) \Bigg| \int_{-\infty}^{+\infty} \int_{-\infty}^{+\infty}
    \dfrac{\partial_t P(t,z_1,z_2)}{z_1 + i z_2-\phi}dz_1 dz_2\right>_{x_0}
     \\ \nonumber &=&
     -\dfrac{1}{2\pi i}\left< \Psi_0(x_0) \Bigg| \int_{-\infty}^{+\infty} \int_{-\infty}^{+\infty}
     \dfrac{\partial}{\partial \phi} \dfrac{A_1 P(t,z_1,z_2)}{z_1 + i z_2-\phi}dz_1 dz_2\right>_{x_0}
    \\ \nonumber & &
    -\dfrac{1}{2\pi i}\left< \Psi_0(x_0) \Bigg| \int_{-\infty}^{+\infty} \int_{-\infty}^{+\infty}
    \dfrac{\partial}{\partial \phi}\dfrac{ iA_2 P(t, z_1,z_2)}{z_1 + i z_2-\phi}dz_1 dz_2\right>_{x_0}
	\\ \nonumber & &
    +\dfrac{1}{4\pi i}\left< \Psi_0(x_0) \Bigg| \int_{-\infty}^{+\infty} \int_{-\infty}^{+\infty}
    \dfrac{\partial^2}{\partial \phi^2} \dfrac{ D_2^2 P(t,z_1,z_2)}{z_1 + i z_2-\phi}dz_1 dz_2\right>_{x_0}
    \\ \nonumber & &
    +\dfrac{1}{4\pi i}\left< \Psi_0(x_0) \Bigg| \int_{-\infty}^{+\infty} \int_{-\infty}^{+\infty}
    -\dfrac{\partial^2}{\partial \phi^2}\dfrac{ D_1^2 P(t,z_1,z_2)}{z_1 + i z_2-\phi}dz_1 dz_2\right>_{x_0}
    \\ \nonumber & &
    -\dfrac{1}{2\pi i}\left< \Psi_0(x_0) \Bigg| \int_{-\infty}^{+\infty} \int_{-\infty}^{+\infty}
    \dfrac{\partial^2}{\partial \phi^2}\dfrac{i D_1D_2 P(t,z_1,z_2)}{z_1 + i z_2-\phi}dz_1 dz_2\right>_{x_0}
    \\ \nonumber &=&
    -\dfrac{1}{2\pi i}\left< \Psi_0(x_0) \Bigg| \int_{-\infty}^{+\infty} \int_{-\infty}^{+\infty}
    \dfrac{\partial}{\partial \phi}\dfrac{A(z_1,z_2) P(t,z_1,z_2)}{z_1 + i z_2-\phi}dz_1 dz_2\right>_{x_0}
    \\ \nonumber & &
    -\dfrac{1}{4\pi i}\left< \Psi_0(x_0) \Bigg| \int_{-\infty}^{+\infty} \int_{-\infty}^{+\infty}
    \dfrac{\partial^2}{\partial \phi^2}\dfrac{D^2(z_1,z_2) P(t,z_1,z_2)}{z_1 + i z_2-\phi}dz_1 dz_2\right>_{x_0}
    \\ \nonumber &=&
    -\dfrac{\partial}{\partial \phi} \dfrac{1}{2\pi i}\left< \Psi_0(x_0) \Bigg| \int_{-\infty}^{+\infty} \int_{-\infty}^{+\infty}
    \dfrac{A(z_1,z_2) P(t,z_1,z_2)}{z_1 + i z_2-\phi}dz_1 dz_2\right>_{x_0}
    \\ \nonumber & &
    -\dfrac{1}{2}\dfrac{\partial^2}{\partial \phi^2}\dfrac{1}{2\pi i}\left< \Psi_0(x_0) \Bigg| \int_{-\infty}^{+\infty} \int_{-\infty}^{+\infty} \dfrac{D^2(z_1,z_2) P(t,z_1,z_2)}{z_1 + i z_2-\phi}dz_1 dz_2\right>_{x_0}.
\end{eqnarray}
This implies the result stated in the theorem, since, by the argument similar to that in the proof of Proposition~\ref{propcr}, we have
\begin{eqnarray} \nonumber & &
    \dfrac{1}{2\pi i}\left< \Psi_0(x_0) \Bigg| \int_{-\infty}^{+\infty} \int_{-\infty}^{+\infty}
    \dfrac{A(z_1,z_2) P(t,z_1,z_2)}{z_1 + i z_2-\phi}dz_1 dz_2\right>_{x_0}
    \\ \nonumber && - \dfrac{A}{2\pi i}\left< \Psi_0(x_0) \Bigg| \int_{-\infty}^{+\infty} \int_{-\infty}^{+\infty}
    \dfrac{P(t,z_1,z_2)}{z_1 + i z_2-\phi}dz_1 dz_2\right>_{x_0} \in \mathcal{H}(\mathbb{C})
\end{eqnarray}
and
\begin{eqnarray} \nonumber & &
    \dfrac{1}{2\pi i}\left< \Psi_0(x_0) \Bigg| \int_{-\infty}^{+\infty} \int_{-\infty}^{+\infty} \dfrac{D^2(z_1,z_2) P(t,z_1,z_2)}{z_1 + i z_2-\phi}dz_1 dz_2\right>_{x_0}
    \\ \nonumber && -
    \dfrac{D^2}{2\pi i}\left< \Psi_0(x_0) \Bigg| \int_{-\infty}^{+\infty} \int_{-\infty}^{+\infty} \dfrac{P(t,z_1,z_2)}{z_1 + i z_2-\phi}dz_1 dz_2\right>_{x_0}
    \in \mathcal{H}(\mathbb{C}).
\end{eqnarray}
\end{proof}

\begin{remark}
Notice that solutions of equation~\eqref{realfokker0} corresponding to \eqref{isde}, with $A(z)= - z^2$ and $D(z) = z$, are compactly supported,
at least for some periods of time (i.e.~for $T$ sufficiently small), as shown in section~\ref{dynamics}.
\end{remark}

\subsection{Compactly Supported Initial Conditions}

In this subsection, unless explicitly indicated, we restrict ourselves to an important particular case:
compactly supported initial conditions for the Markov chain,
i.e.~we assume that $P_n(0)=0$ for all $n > N$,
where $N \in \mathbb{N}$ is arbitrarily large but fixed. We need the following preparatory results.

\begin{lem}
	The factorial moments
\begin{equation}\nonumber
\mathcal{M}_m(t):=\mathbb{E}\left[n(n-1)\cdots(n-m+1)\right](t), \,\, \forall \, m\in \mathbb{N}\cup\left\lbrace 0\right\rbrace
\end{equation}
    where $\mathbb{E}\left[\cdot\right]:=\sum_n (\cdot) P_n$,
    fulfil the system of coupled differential equations
	\begin{equation}\label{eq:momentssystem}
	\dfrac{d}{dt}\mathcal{M}_m=-\dfrac{m(m-1)}{2}\mathcal{M}_m-m\mathcal{M}_{m+1}, \,\, \forall \, m\in \mathbb{N}\cup\left\lbrace 0\right\rbrace.
	\end{equation}
\end{lem}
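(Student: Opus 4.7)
The natural approach is to exploit the generating function $G(x,t) = \sum_n P_n(t)\,x^n$ from Section~\ref{gf} together with the identity $\mathcal{M}_m(t) = \partial_x^m G(x,t)\big|_{x=1}$ recalled just before Definition~\ref{def_3}. The plan is to differentiate the evolution equation~\eqref{gfpde} exactly $m$ times in $x$ and then evaluate at the boundary $x = 1$, precisely where the coefficient $(1-x^2)$ of the PDE vanishes.

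Applying Leibniz's rule to $(1-x^2)\,\partial_x^2 G$ and using that $1-x^2$ has only three non-trivial $x$-derivatives yields
\begin{equation*}
\partial_x^m\bigl[(1-x^2)\,\partial_x^2 G\bigr] = (1-x^2)\,\partial_x^{m+2}G - 2mx\,\partial_x^{m+1}G - m(m-1)\,\partial_x^m G.
\end{equation*}
Setting $x=1$ kills the leading term, so the identity closes on $\mathcal{M}_m$ and $\mathcal{M}_{m+1}$:
\begin{equation*}
\frac{d}{dt}\mathcal{M}_m = \frac{\lambda}{2}\bigl[-2m\,\mathcal{M}_{m+1} - m(m-1)\,\mathcal{M}_m\bigr],
\end{equation*}
which coincides with~\eqref{eq:momentssystem} once one absorbs $\lambda$ via the time rescaling $t \to t/\lambda$ used throughout.

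The only technical point, and indeed what would be the main obstacle in a general setting, is justifying the interchange of $\partial_t$ with $\partial_x^m$ and the legitimacy of the one-sided limit $x \to 1^-$ for derivatives of arbitrarily high order. Under the compactly supported initial condition assumed in this subsection, however, the binary-annihilation dynamics confine the chain to $\{0,1,\dots,N\}$ for all time, so $G(\cdot,t)$ is a polynomial of degree at most $N$ in $x$; all manipulations are then trivially legal and $\mathcal{M}_m \equiv 0$ for $m > N$, reducing~\eqref{eq:momentssystem} to a finite triangular linear ODE system. If one preferred a purely elementary check, one could alternatively multiply~\eqref{mark11} by the falling factorial $n^{(m)} := n(n-1)\cdots(n-m+1)$, sum over $n$, shift $n \mapsto n-2$ in the gain term, and use the identity $n(n-1) = (n-m)(n-m-1) + 2m(n-m) + m(m-1)$ to rewrite $n^{(m)}\,n(n-1) = n^{(m+2)} + 2m\,n^{(m+1)} + m(m-1)\,n^{(m)}$; the two $\mathcal{M}_{m+2}$ contributions then cancel and~\eqref{eq:momentssystem} drops out immediately.
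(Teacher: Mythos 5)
Your argument is correct and is essentially the paper's own proof: differentiate equation~\eqref{gfpde} $m$ times via Leibniz's rule, note that only three terms of the expansion of $\partial_x^m[(1-x^2)\partial_x^2 G]$ survive, and evaluate at $x=1$ where the $(1-x^2)$ coefficient kills the $\mathcal{M}_{m+2}$ term. Your closing remarks — the justification of the interchange of derivatives via the polynomial structure of $G$ under compactly supported initial data, and the alternative direct computation from~\eqref{mark11} using the falling-factorial identity — go slightly beyond what the paper writes down, but the core route is the same.
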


\begin{proof}
First we set $\lambda=1$ without loss of generality.
The factorial moments can be computed as derivatives of the generating function
\begin{equation}\nonumber
\mathcal{M}_m(t)=\dfrac{\partial^m G}{\partial x^m}(t,1).
\end{equation}
Now, by taking $m$ derivatives with respect to $x$ in equation \eqref{gfpde} we find
\begin{displaymath}
\dfrac{\partial}{\partial t}\dfrac{\partial^m G}{\partial x^m}=\dfrac{1}{2}\sum_{j=0}^{m}\binom{m}{j}\dfrac{\partial^{m-j}}{\partial x^{m-j}}(1-x^2)\dfrac{\partial^{j+2}G}{\partial x^{j+2}},
\end{displaymath}
and by evaluating the last expression at $x=1$ the statement follows.
\end{proof}

\begin{remark}
Note that this result is valid for any initial condition independently of the fact that it is compactly supported or not. For the compactly supported  initial condition  system \eqref{eq:momentssystem} is finite-dimensional, since
$\mathcal{M}_m(t)=0$ for all $m>N$. This fact is crucial for the following result, which is indeed restricted to that case.
\end{remark}

\begin{lem}\label{theo:moments}
The moments of the SDE~\eqref{isde} coincide identically with the factorial moments $\mathcal{M}_m(t)$ as long as its solution exists.
\end{lem}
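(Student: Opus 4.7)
The plan is to apply It\^o's formula to the power function $\phi\mapsto\phi^m$ along solutions of the complex SDE \eqref{isde}, and then compare the resulting ODE for $M_m(t):=\mathbb{E}[\phi^m(t)]$ with the factorial-moment system \eqref{eq:momentssystem}. Since both satisfy the same linear coupled system, matching initial conditions and invoking uniqueness in the finite-dimensional truncation will close the argument.

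First I would rewrite the complex SDE as a real two-dimensional system for $(\phi_1,\phi_2)=(\Re\phi,\Im\phi)$:
$$d\phi_1=-(\phi_1^2-\phi_2^2)\,dt-\phi_2\,dW_t,\qquad d\phi_2=-2\phi_1\phi_2\,dt+\phi_1\,dW_t,$$
and apply the standard two-dimensional It\^o formula to the holomorphic function $f(\phi_1,\phi_2)=(\phi_1+i\phi_2)^m$. The drift part from $\partial_1 f\,d\phi_1+\partial_2 f\,d\phi_2$ collapses, after using the Cauchy--Riemann-type identity $\partial_2 f=i\partial_1 f$, to $-m\phi^{m+1}\,dt$, while the quadratic-covariation contribution reduces via
$$(d\phi_1)^2+2i\,d\phi_1\,d\phi_2+(i\,d\phi_2)^2\cdot(-1)=(\phi_2^2-2i\phi_1\phi_2-\phi_1^2)\,dt=-\phi^2\,dt$$
(the rigorous form of the heuristic $(i\,dW_t)^2=-dt$) to $-\tfrac{m(m-1)}{2}\phi^m\,dt$. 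The martingale part equals $im\phi^m\,dW_t$. Hence
$$d(\phi^m)=-m\phi^{m+1}\,dt-\frac{m(m-1)}{2}\phi^m\,dt+im\phi^m\,dW_t.$$
Taking expectations and assuming enough integrability to kill the martingale term yields
$$\frac{d}{dt}M_m(t)=-\frac{m(m-1)}{2}M_m(t)-mM_{m+1}(t),$$
which is \emph{identical} in form to the system \eqref{eq:momentssystem} obeyed by $\mathcal{M}_m$.

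Third, I would exploit the compact-support hypothesis to close the hierarchy. Because $\mathcal{M}_m(0)=0$ for $m>N$, backward induction on the system \eqref{eq:momentssystem} (starting from the largest nontrivial index and using linear ODE uniqueness with a zero source) forces $\mathcal{M}_m(t)\equiv 0$ for all $m>N$ and all $t\geq 0$. Choosing the initial law of $\phi(0)$ so that $M_m(0)=\mathcal{M}_m(0)$ for every $m\in\mathbb{N}\cup\{0\}$ (which is exactly what the Poisson representation of the initial data from Section~\ref{qm} arranges; in the extreme case where $P_n(0)=\delta_{nn_0}$ one simply takes $\phi(0)=n_0$ deterministically), the same truncation applies to the SDE-side moments. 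The remaining system is a finite-dimensional linear ODE, so uniqueness gives $M_m(t)=\mathcal{M}_m(t)$ for all $t\geq 0$ and all $m$.

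The main obstacle is analytic: the drift $-\phi^2$ and diffusion $i\phi$ of \eqref{isde} are non-globally-Lipschitz with superlinear growth in the complex plane, so existence, non-explosion, and the integrability needed to apply It\^o's formula and to remove the stochastic integral in expectation are not automatic. In the compactly-supported setting one may address this by localization via stopping times and a posteriori uniform moment control obtained from the very ODE system derived above (a bootstrap argument: the truncated system has bounded solutions on compact time intervals, which justifies the integrability assumed at the It\^o step). Specifying the initial law of $\phi(0)$ compatibly with $\{P_n(0)\}$ is the other delicate point, and is precisely where the Poisson representation enters the picture.
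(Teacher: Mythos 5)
Your proposal is correct and follows essentially the same route as the paper: apply It\^o's formula to $f(\phi)=\phi^m$ (the paper delegates the verification of the complex It\^o rule to Appendix~\ref{sectvs}, which is exactly your two-dimensional real computation), obtain the system \eqref{momentseq} identical to \eqref{eq:momentssystem}, and conclude by ODE uniqueness. Your additional remarks on closing the hierarchy via compact support and on the integrability needed to kill the martingale term make explicit points the paper leaves implicit, but the argument is the same.
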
	

\begin{proof}
Applying It\^o formula to a entire function $f(\cdot)$ of the solution to SDE \eqref{isde} yields
	\begin{equation}\nonumber
	df(\phi)=-\phi^2 \left(  \dfrac{\partial f}{\partial \phi}+\dfrac{1}{2}\dfrac{\partial^2f}{\partial \phi^2}\right) dt+i\phi \dfrac{\partial f}{\partial \phi}dW;
	\end{equation}
the validity of this formula is proven in Appendix~\ref{sectvs}.
	Substituting $f(\phi)=\phi^m$ and applying the martingale property of the It\^o integral gives
	\begin{equation}\label{momentseq}
	\dfrac{d}{dt} \, \mathbb{E}(\phi^m) = -\dfrac{m(m-1)}{2} \, \mathbb{E}(\phi^m)-m \, \mathbb{E}(\phi^{m+1}),
	\end{equation}
	so this system is identical to \eqref{eq:momentssystem}. The statement follows from the classical uniqueness theorem for systems of
ordinary differential equations~\cite{arnold}.
\end{proof}

We now formulate the analog of Theorem~\ref{connection} in the present context.

\begin{thm} \label{connection2}
	Let $\Psi \in C^1([0,T],C^\infty(\mathbb{R})')$ be a solution to
	\eqref{distpde},
 where $A(\phi)$ and $D(\phi)$ are polynomials. 

Then its Cauchy representation can be expressed as
	\begin{equation}\nonumber
	\{\Psi\}_a(t,\phi)=\dfrac{1}{2 \pi i} \sum_{n=0}^{\infty}\dfrac{(-1)^{n+1}}{\phi^{n+1}}
    \left< \Psi_0(x_0) \Bigg| \int_{-\infty}^{+\infty} \int_{-\infty}^{+\infty} (z_1 + i z_2)^n P(t,z_1,z_2)dz_1 dz_2\right>_{x_0}
	\end{equation}
for $\phi \in \mathbb{C} \setminus \{0\}$,
if there exists a unique solution $P \in C^1([0,T],\mathcal{M}(\mathbb{R}^2))$  to  \eqref{realfokker0} with initial condition
$$
P(0,z_1,z_2)=\delta(z_1-x_0) \delta(z_2),
$$
%
with all moments finite.
\end{thm}

\begin{remark}
The finiteness of all moments can be obviously replaced by the finiteness of the moment of the same order as the highest non-vanishing moment of
the initial distribution.
\end{remark}

\begin{proof}
First note that
\begin{eqnarray}\nonumber & &
\dfrac{1}{2 \pi i} \sum_{n=0}^{\infty}\dfrac{(-1)^{n+1}}{\phi^{n+1}}
\left< \Psi_0(x_0) \Bigg| \int_{-\infty}^{+\infty} \int_{-\infty}^{+\infty} (z_1 + i z_2)^n P(0,z_1,z_2)dz_1 dz_2\right>_{x_0}
\\ \nonumber &=&
\dfrac{1}{2 \pi i} \sum_{n=0}^{\infty}\dfrac{(-1)^{n+1}}{\phi^{n+1}}
\left< \Psi_0(x_0) \Bigg| \int_{-\infty}^{+\infty} \int_{-\infty}^{+\infty} (z_1 + i z_2)^n \delta(z_1-x_0) \delta(z_2) dz_1 dz_2\right>_{x_0}
\\ \nonumber &=&
\dfrac{1}{2 \pi i} \sum_{n=0}^{\infty}\dfrac{(-1)^{n+1}}{\phi^{n+1}}
\left< \Psi_0(x_0) \Bigg| x_0^n\right>_{x_0}
\\ \nonumber &=&
\dfrac{1}{2 \pi i} \sum_{n=0}^{\infty}\dfrac{(-1)^{n+1}}{\phi^{n+1}} \mathcal{M}_n(0),
\end{eqnarray}
where $\mathcal{M}_n(0)$ denote the moments of the distribution $\Psi_0(x_0)$.
Consider now the (infinite-dimensional) vector
\begin{eqnarray}\nonumber & &
\left\{
\left< \Psi_0(x_0) \Bigg| \int_{-\infty}^{+\infty} \int_{-\infty}^{+\infty} (z_1 + i z_2)^n P(t,z_1,z_2)dz_1 dz_2\right>_{x_0}
\right\}_{n=1}^\infty
\\ \nonumber &=&
\left\{ \mathcal{D}_{nn} \right\}_{\mathbb{Z} \times \mathbb{Z}} \left\{
\left< \Psi_0(x_0) \Bigg| \int_{-\infty}^{+\infty} \int_{-\infty}^{+\infty} (z_1 + i z_2)^n P(0,z_1,z_2)dz_1 dz_2\right>_{x_0}
\right\}_{n=1}^\infty
\\ \nonumber &=&
\left\{ \mathcal{D}_{nn} \right\}_{\mathbb{Z} \times \mathbb{Z}}
\left\{ \left< \Psi_0(x_0) \Bigg| x_0^n\right>_{x_0} \right\}_{n=1}^\infty
\\ \nonumber &=&
\mathcal{M}_n(t),
\end{eqnarray}
where $\left\{ \mathcal{D}_{nn} \right\}_{\mathbb{Z} \times \mathbb{Z}}$ denotes the infinite-dimensional operator that describes the time
evolution of the moments~\eqref{momentseq}.
Therefore
	\begin{equation}\nonumber
	\{\Psi\}_a(t,\phi)=\dfrac{1}{2 \pi i} \sum_{n=0}^{\infty}\dfrac{(-1)^{n+1}}{\phi^{n+1}} \mathcal{M}_n(t),
    \end{equation}
for all $t \ge 0$, and thus, as a distribution
\begin{equation}\nonumber
\Psi(t,\phi)=\sum_{n=0}^{\infty}\dfrac{\mathcal{M}_n(t)}{n!} \, \delta^{(n)}(\phi),
\end{equation}
which is our candidate for solution of the PDE in the statement.
Now we may, by means of the distributional version of~(\ref{G_2}), conclude by computing
\begin{eqnarray}\nonumber
G(t,x) &=& \Big< \Psi(t,\phi) \Big| e^{\phi (x-1)} \Big>_{\phi}  = \sum_{n=0}^{\infty}\dfrac{\mathcal{M}_n(t)}{n!} \, (1-x)^n,
\end{eqnarray}
so we recover our original generating function Taylor-expanded at $x=1$ instead of $x=0$
(note that, in the present case of compactly supported initial conditions, the generating function is simply a polynomial).
\end{proof}

\begin{remark}
The formal connection between Theorems~\ref{connection} and~\ref{connection2} comes from the identity
\begin{equation}\nonumber
\dfrac{1}{z-\phi} = \sum_{n=0}^{\infty}\dfrac{(-1)^{n+1}}{\phi^{n+1}} \, z^n,
\end{equation}
which is valid for $\left\lvert z\right\rvert < \left\lvert \phi\right\rvert$.
\end{remark}

\begin{remark}
The finiteness of the moments of the solution required in the statement of this theorem replaces the compact support condition in Theorem~\ref{connection}.
Actually both hold (note that the latter implies the former) for arbitrary periods of time, as shown in the next section. We will address this question simultaneously to that of the local in time existence of solutions to SDE~\eqref{isde}, which was required in Lemma~\ref{theo:moments}.
\end{remark}

\section{Stochastic Dynamics}
\label{dynamics}

In this section we analyze some dynamical features of equation~\eqref{isde}.
Obviously, one of the missing steps in building a complete theory of the imaginary It\^o interpretation is the global existence
of the solution to this equation. Although we do not offer a proof of this fact herein, we build some partial progress on it,
and we as well prove some characteristic dynamical features of this complex SDE. In particular, we show that its probability
density is not supported in all of $\mathbb{C}$, a necessary requirement that appears in the statement of Theorem~\ref{connection}.
With respect to Theorem~\ref{connection2}, our result on the local in time existence of the solution will be enough, as we will highlight
in the following.

Clearly $\phi=0$ is the unique absorbing state for this diffusion,
however it is an unstable state (perhaps contrary to intuition after regarding the development in Appendix~\ref{sectvan}).
We start with the precise statement of this fact.

\begin{thm}\label{thdynamic1}
Let $\phi(t)$ be a solution of \eqref{isde}; with probability one it holds that:
\begin{itemize}
\item if $\vert \phi(0)\vert >0$ then the solution exists at least for a positive interval of time
(if $\vert \phi(0)\vert =0$ then the solution trivially exists for all times);
\item if $\vert \phi(t^*)\vert \geq 1/2$ for some $t^*\geq 0$ then $\vert \phi(t)\vert \geq 1/2$ for all $t \geq t^*$;
\item if $\vert \phi(0)\vert >0$ and the solution is globally defined then
$$
\liminf_{t \to \infty} \lvert \phi(t) \rvert \geq 1/2;
$$
\item if $\vert \phi(0)\vert >0$ the solution never gets absorbed at the origin.
\end{itemize}
\end{thm}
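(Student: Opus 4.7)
The plan hinges on a single observation: the modulus $R(t) := |\phi(t)|^2$ obeys a pathwise ordinary differential equation with no martingale contribution, and from that point on every bullet reduces to a straightforward deterministic argument. Concretely, writing $\phi = u + iv$ and expanding equation~\eqref{isde} gives the real system
$$
du = (v^2 - u^2)\,dt - v\,dW_t, \qquad dv = -2uv\,dt + u\,dW_t,
$$
and I would then apply It\^o's formula (in the form justified in Appendix~\ref{sectvs}) to $R = u^2 + v^2$. The stochastic contributions $-2uv\,dW_t$ and $+2uv\,dW_t$ cancel identically, the It\^o correction supplies $(u^2+v^2)\,dt$, and one is left with the pathwise representation
$$
\frac{dR}{dt} = R\,(1-2u), \qquad R(t) = R(0)\,\exp\!\left(\int_0^t(1-2u(s))\,ds\right).
$$

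From here the first and fourth bullets fall out directly. Local existence on a positive time interval follows because the coefficients of~\eqref{isde} are polynomials in $\phi$, hence locally Lipschitz when viewed as a real two-dimensional SDE for $(u,v)$, so classical strong-solution theory applies (and the case $\phi(0)=0$ is covered by the constant trivial solution). For the fourth bullet, while the solution exists $u = \mathrm{Re}\,\phi$ is bounded on any compact sub-interval of the lifespan, so the exponent in the representation above is finite and the strict positivity $R(0) > 0$ propagates: $R(t) > 0$ for every $t$ in the lifespan of the solution.

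The second bullet I would attack by contradiction. Supposing $R(t^*) \geq 1/4$ and $R(t) < 1/4$ for some $t > t^*$, set $t_1 := \sup\{s \in [t^*, t] : R(s) \geq 1/4\}$; then $t_1 \in [t^*, t)$, continuity forces $R(t_1) = 1/4$ and $R(s) < 1/4$ on $(t_1, t]$. On that sub-interval one has $|u(s)| \leq |\phi(s)| < 1/2$, so $1 - 2u(s) > 0$ strictly, and the exponential formula gives $R(t) > R(t_1) = 1/4$, a contradiction.

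The third bullet I would split into two cases according to whether the level $R = 1/4$ is ever attained. If it is, the second bullet makes $\{R \geq 1/4\}$ absorbing and the $\liminf$ is already at least $1/2$. Otherwise $R(t) < 1/4$ for all $t \geq 0$, and then $u(s) \leq \sqrt{R(s)}$ yields the pathwise differential inequality $dR/dt \geq R(1 - 2\sqrt{R})$; a standard comparison with $\dot r_0 = r_0(1 - 2\sqrt{r_0})$, $r_0(0) = R(0) > 0$, which via $w = \sqrt{r_0}$ reduces to the logistic equation $\dot w = w(1/2 - w)$ with globally attracting equilibrium $w = 1/2$, gives $R(t) \geq r_0(t) \to 1/4$. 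Either way $\liminf_{t\to\infty} |\phi(t)| \geq 1/2$. The only technical delicacy in the whole plan is the rigorous application of the complex It\^o formula that produces the pathwise ODE for $R$; once that step is granted, as it is in Appendix~\ref{sectvs}, the rest is elementary deterministic comparison.
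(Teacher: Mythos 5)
Your proposal is correct and follows essentially the same route as the paper: both hinge on the pathwise identity $d|\phi|^2=(1-2\,\Re\phi)\,|\phi|^2\,dt$ (the martingale terms cancel) and then a comparison of $|\phi|$ with the logistic equation $\dot w=w(\tfrac12-w)$. The only cosmetic difference is that the paper substitutes $\gamma=|\phi|^{-1}$ and integrates the two-sided bounds explicitly, whereas you invoke local Lipschitz theory for existence, a first-crossing argument for the invariance of $\{|\phi|\ge 1/2\}$, and the qualitative attractivity of the logistic equilibrium for the $\liminf$ bound --- all of which are sound.
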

\begin{proof}
First consider $|\phi(t)|^2=\phi_1(t)^2+\phi_2(t)^2$, where $\phi_1(t):=\Re(\phi)(t), \phi_2(t):=\Im(\phi)(t)$.
Since~\eqref{isde} is equivalent to the real two-dimensional system
\begin{equation}\nonumber
\left\lbrace
\begin{array}{l}
d \phi_1 = - (\phi_1^2 - \phi_2^2) dt- \phi_2 dW_t \\
d \phi_2 = - 2\phi_1 \phi_2 dt+ \phi_1 d W_t
\end{array}
\right. ,
\end{equation}
It\^o calculus implies
\begin{displaymath}
d|\phi|^2=(1-2 \phi_1)|\phi|^2 dt.
\end{displaymath}
This  differential together with the fact $-\vert \phi \vert \leq \phi_1 \leq \vert \phi \vert$ yields the string of inequalities
\begin{displaymath}
(1-2 \vert \phi \vert)\vert \phi \vert^2 \leq \frac{d}{dt}\vert \phi \vert^2 \leq (1+2\vert \phi \vert)\vert \phi \vert^2,
\end{displaymath}
which can be re-written as
\begin{displaymath}
\left(\dfrac{1}{2}- \vert \phi \vert \right)\vert \phi \vert \leq \frac{d}{dt}\vert \phi \vert \leq \left( \dfrac{1}{2}+\vert \phi \vert \right)
\vert \phi \vert.
\end{displaymath}
Dividing all three terms by $\lvert \phi \rvert ^2$, everything can be written in terms of $\gamma(t):=\lvert \phi(t)\rvert ^{-1}$
\begin{displaymath}
-\dfrac{\gamma(t)}{2}- 1 \leq \frac{d}{dt} \gamma(t) \leq -\dfrac{\gamma(t)}{2}+ 1,
\end{displaymath}
from where, after multiplying by $e^{t/2}$, we find
\begin{displaymath}
- e^{t/2} \leq \frac{d}{dt} \left( e^{t/2} \gamma(t)\right) \leq e^{t/2}.
\end{displaymath}
Integrating and undoing the change of variables we conclude
\begin{displaymath}
\dfrac{|\phi(t^*)| e^{(t-t^*)/2}}{1+2|\phi(t^*)|[e^{(t-t^*)/2}-1]} \leq \vert \phi(t) \vert \leq
\dfrac{|\phi(t^*)| e^{(t-t^*)/2}}{1-2|\phi(t^*)|[e^{(t-t^*)/2}-1]},
\end{displaymath}
for all $t \ge t^*$.\footnote{For the sake of clarity, let us emphasize that these computations are justified by the assumption on the initial condition
$\lvert \phi(0)\rvert > 0$ and the continuity of the paths of $\phi$, which guarantee their local validity that can be subsequently extended for
arbitrarily long intervals of time.}
Consequently:
\begin{itemize}
\item the solution exists until a possible blow-up time $T^* \geq 2 \log \left[1+\dfrac{1}{2|\phi(t^*)|}\right]$;
\item
$$
\lim_{t \to \infty} \dfrac{|\phi(t^*)| e^{(t-t^*)/2}}{1+2|\phi(t^*)|[e^{(t-t^*)/2}-1]} = \frac12;
$$
\item
$$
\dfrac{|\phi(t^*)| e^{(t-t^*)/2}}{1+2|\phi(t^*)|[e^{(t-t^*)/2}-1]}=\dfrac{1}{2+\dfrac{1-2|\phi(t^*)|}{|\phi(t^*)| e^{(t-t^*)/2}}};
$$
\item
$$
\dfrac{|\phi(t^*)| e^{(t-t^*)/2}}{1+2|\phi(t^*)|[e^{(t-t^*)/2}-1]}=0 \Longleftrightarrow |\phi(t^*)|=0;
$$
\end{itemize}
therefore the statement follows.
\end{proof}

\begin{cor}
Let $\phi(t)$ be a solution of \eqref{isde} such that $|\phi(0)|>0$; then there exists a $\bar{t} \in [0,\infty]$ such that $\phi(\bar{t}) \geq 1/2$.
Moreover if $\vert \phi\left(t^*\right)\vert > 1/2$ for some $0 \le t^* < \infty$ then $\vert \phi(t)\vert > 1/2$ for all $t^* \le t < \infty$.
\end{cor}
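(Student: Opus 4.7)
My plan is to read both claims off Theorem~\ref{thdynamic1} together with the explicit pathwise lower bound derived in its proof, working entirely deterministically on a fixed sample path.

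For the first claim I would split on the size of $|\phi(0)|$. If $|\phi(0)|\ge 1/2$, the choice $\bar t=0$ trivially works. If $|\phi(0)|<1/2$, two sub-cases arise: either the path reaches the level $1/2$ at some finite time, in which case continuity of paths furnishes a finite $\bar t$, or it stays strictly below $1/2$ for all finite times. In the latter case the drift and diffusion of~\eqref{isde} remain bounded along the path, so the solution cannot blow up in finite time and is therefore globally defined; the third bullet of Theorem~\ref{thdynamic1} then yields $\liminf_{t\to\infty}|\phi(t)|\geq 1/2$, and taking $\bar t=\infty$ with $|\phi(\infty)|$ interpreted via this liminf closes the case.

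For the second claim I would upgrade the weak bound of the second bullet of Theorem~\ref{thdynamic1} by returning to the pathwise lower estimate obtained in its proof,
\[
|\phi(t)| \;\geq\; \frac{|\phi(t^*)|\,e^{(t-t^*)/2}}{1+2|\phi(t^*)|\bigl[e^{(t-t^*)/2}-1\bigr]}, \qquad t\ge t^*.
\]
Writing $s=|\phi(t^*)|>1/2$ and dividing numerator and denominator by $s\,e^{(t-t^*)/2}$, this rewrites as
\[
|\phi(t)| \;\geq\; \frac{1}{2-(2s-1)/\bigl(s\,e^{(t-t^*)/2}\bigr)}.
\]
Since $s>1/2$, the subtracted term is strictly positive for every $t\ge t^*$ (and decreases from $(2s-1)/s<1$ at $t=t^*$ to $0$ as $t\to\infty$), so the denominator is strictly less than $2$ throughout, forcing $|\phi(t)|>1/2$ for all $t\ge t^*$.

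The main obstacle is notational rather than technical: resolving the meaning of $\bar t=\infty$ in the first claim, and promoting the weak inequality $|\phi(t)|\ge 1/2$ already established in Theorem~\ref{thdynamic1} to a strict one. Once the closed-form bound above is in hand, both reduce to elementary algebra, so no fresh stochastic input is required beyond what is already contained in the proof of Theorem~\ref{thdynamic1}.
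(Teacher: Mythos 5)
Your proof is correct and follows essentially the same route as the paper, whose own proof of this corollary is literally ``follows immediately from the proof of Theorem~\ref{thdynamic1}'': both claims are read off the explicit pathwise lower bound $\vert\phi(t)\vert \geq \vert\phi(t^*)\vert e^{(t-t^*)/2}\big/\bigl(1+2\vert\phi(t^*)\vert[e^{(t-t^*)/2}-1]\bigr)$ derived there, with your case split on $\vert\phi(0)\vert$ and the interpretation of $\bar t=\infty$ via the $\liminf$ supplying the details the paper leaves implicit. One cosmetic slip: your parenthetical that the subtracted term equals $(2s-1)/s<1$ at $t=t^*$ is only true when $s<1$; what your argument actually needs is that this term is positive and stays below $2$ (equivalently, that the denominator of the original lower bound is positive), which holds for every $s>1/2$ since the term equals $2-1/s$ at $t=t^*$ and decreases thereafter.
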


\begin{proof}
Follows immediately from the proof of Theorem~\ref{thdynamic1}.
\end{proof}

\begin{cor}\label{compsupp}
Let $\phi(t)$ be a solution of \eqref{isde} subject to $\phi(0)=x_0 \in \mathbb{R}$; with probability one it holds that:
$$
\vert \phi(t) \vert \leq
\dfrac{|x_0| e^{(t-t^*)/2}}{1-2|x_0|[e^{(t-t^*)/2}-1]}.
$$
In particular this implies that  solutions to the Fokker-Planck equation~\eqref{realfokker0}, with initial condition
$P(0, z_1,z_2)=\delta(z_1-x_0) \delta(z_2)$,
are compactly supported at least during the time interval $[0,\mathfrak{T})$, where the deterministic time
$$
\mathfrak{T} = 2 \log \left[1+\dfrac{1}{2|x_0|}\right].
$$
\end{cor}

\begin{proof}
Follows immediately from the proof of Theorem~\ref{thdynamic1}.
\end{proof}

\begin{remark}
Note that this corollary assures the compact support and the finiteness of moments requirements in the statements of Theorems~\ref{connection} and~\ref{connection2} locally in time. This is due to the  degenerated character of the diffusion and the specific structure of the coefficients.
Although these theorems build the Cauchy representation by averaging over $x_0$, the compact support
of the initial distribution guarantees their local in time validity. Moreover, since the initial distribution is supported at the origin
in the case of Theorem~\ref{connection2}, the validity of this theorem is actually global in time
(in the sense that it is valid for any finite, but arbitrary, lapse of time).
Note, however, that this initial distribution involves derivatives of the Dirac delta, so the solution of the Fokker-Planck equation has to be defined in
a neighborhood of $x_0=0$, what implies that we cannot rely only on the trivial solution of the SDE;
the global in time validity follows from the fact that this neighborhood can be arbitrarily small.
\end{remark}

We note that the second property in the statement of Theorem~\ref{thdynamic1} was observed in numerical simulations~\cite{munoz1} and
herein we offer, for the first time to the best of our knowledge, a mathematical proof of it.
Moreover, it shows that the probability density of this diffusion might identically vanish in a certain disc for all times.
Again, this is a consequence of the degenerated character of the diffusion and the particular structure of the coefficients.
Note that we have not proven the global existence of solution and, in fact, a finite time blow-up is in principle possible
due to the quadratic nonlinearity. If global existence held, then it would be interesting to determine whether or not
the compact support property is global in time too. But so far both questions remain open.
Next we state the counterpart of Theorem~\ref{thdynamic1} in terms of standard deviations.

\begin{thm}
Let $\phi(t)$ be a solution to~\eqref{isde} such that $\vert \phi(0)\vert >0$; then
\begin{itemize}
\item If $\mathbb{E}[\vert \phi(t^*)\vert^{-2}] < \infty$
for some $t^* \geq 0$ then $\liminf_{t \to \infty} \mathbb{E}[\vert \phi(t)\vert^2]^{1/2} \geq 1/\sqrt{2}$.
\item Moreover if $\mathbb{E}[\vert \phi(t^*)\vert^{-2}]^{-1/2} \geq 1/2$
for some $t^* \geq 0$ then $\mathbb{E}[\vert \phi(t)\vert^2]^{1/2} \geq 1/2$ for all $t \geq t^*$.
\end{itemize}
\end{thm}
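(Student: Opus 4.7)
The key insight, already exploited in Theorem~\ref{thdynamic1}, is that $|\phi|^2$ has \emph{deterministic} dynamics $d|\phi|^2 = (1-2\phi_1)|\phi|^2\, dt$; but in addition, as we shall see, the complex auxiliary process $1/\phi$ satisfies a strikingly simple linear ODE in expectation. My plan is to combine these two facts with the Cauchy--Schwarz inequality to transform an upper bound on $\mathbb{E}[|\phi|^{-2}]$ into the sought lower bound on $\mathbb{E}[|\phi|^2]$.

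First I would set $Y_t := |\phi(t)|^{-2}$ and $U_t := \mathbb{E}[Y_t]$. Since $|\phi|^2$ carries no martingale part, plain calculus gives $dY_t = (2\phi_1 - 1)Y_t\, dt$, and, because $\phi_1/|\phi|^2 = \Re(1/\phi)$, taking expectations yields
\[
\frac{dU_t}{dt} = -U_t + 2\,\Re\bigl(\mathbb{E}[1/\phi(t)]\bigr).
\]
Applying It\^o's formula to the holomorphic function $f(\phi) = 1/\phi$ (using $(d\phi)^2 = -\phi^2\, dt$) I would then obtain
\[
d\!\left(\frac{1}{\phi}\right) = \left(1 - \frac{1}{\phi}\right) dt - \frac{i}{\phi}\, dW_t.
\]
The hypothesis $\mathbb{E}[|\phi(t^*)|^{-2}] < \infty$ (propagated forward by the a posteriori bound derived below) makes the stochastic integral a true martingale, so
\[
\frac{d}{dt}\mathbb{E}[1/\phi] = 1 - \mathbb{E}[1/\phi],
\qquad
\mathbb{E}[1/\phi(t)] = 1 + \bigl(\mathbb{E}[1/\phi(t^*)] - 1\bigr) e^{-(t-t^*)},
\]
and in particular $\Re\bigl(\mathbb{E}[1/\phi(t)]\bigr) \to 1$ exponentially fast.

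For the first assertion I would insert this asymptotic information into the Duhamel formula
\[
U_t = U_{t^*} e^{-(t-t^*)} + 2 \int_{t^*}^t e^{-(t-s)}\, \Re\bigl(\mathbb{E}[1/\phi(s)]\bigr)\, ds
\]
and conclude $\limsup_{t\to\infty} U_t \le 2$. The Cauchy--Schwarz inequality $1 = \bigl(\mathbb{E}[|\phi|\cdot|\phi|^{-1}]\bigr)^2 \le \mathbb{E}[|\phi|^2]\cdot U_t$ then delivers $\liminf_{t\to\infty} \mathbb{E}[|\phi(t)|^2] \ge 1/2$, which is the stated bound. For the second assertion, the rougher chain of estimates $\Re\bigl(\mathbb{E}[1/\phi]\bigr) \le |\mathbb{E}[1/\phi]| \le \mathbb{E}[|\phi|^{-1}] \le \sqrt{U_t}$ turns the identity above into the differential inequality $U'_t \le -U_t + 2\sqrt{U_t}$, whose unique positive equilibrium $U=4$ is asymptotically stable from below; hence $U_{t^*} \le 4$ propagates to $U_t \le 4$ for all $t \ge t^*$, and Cauchy--Schwarz once again yields $\mathbb{E}[|\phi(t)|^2] \ge 1/4$.

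The hard part is the rigorous justification of the expectation step for $d(1/\phi)$, since only a single moment is controlled at $t^*$ and global existence of the SDE has not been established. I would handle this by localising with stopping times that keep $|\phi|$ bounded away from $0$ and from infinity; the comparison $U'_t \le -U_t + 2\sqrt{U_t}$ derived a posteriori guarantees that $U_t$ cannot blow up on bounded time intervals, and Fatou / dominated convergence then allow passage to the limit in the stopped equations.
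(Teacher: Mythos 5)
Your argument is correct and follows essentially the same route as the paper: invert to $\xi=1/\phi$, exploit the linear SDE $d\xi=(1-\xi)\,dt-i\xi\,dW_t$ to get the closed ODE for $\mathbb{E}[\xi]$, control $\mathbb{E}[|\xi|^2]=\mathbb{E}[|\phi|^{-2}]$ through the forced linear equation $U'=-U+2\,\Re\,\mathbb{E}[1/\phi]$ (the paper writes this as $df=(\xi_1-f)\,dt$ for $f=\tfrac12|\xi|^2$), and finish with Cauchy--Schwarz. The only divergence is cosmetic: for the second bullet you close the loop with the differential inequality $U'\le -U+2\sqrt{U}$ and the invariance of $\{U\le 4\}$, whereas the paper solves the forced ODE explicitly and bounds the resulting formula by $2$ via the same Jensen estimate $\mathbb{E}[\phi_1/|\phi|^2]\le \mathbb{E}[|\phi|^{-2}]^{1/2}$; your localization remarks address an integrability point the paper disposes of by appealing to global well-posedness of the linear SDE.
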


\begin{proof}
Since $\vert \phi(0)\vert >0$ then $\vert \phi(t)\vert >0$ for all times by Theorem~\ref{thdynamic1}.
Consequently we can change variables $\xi=1/ \phi$ to obtain
\begin{displaymath}
d\xi=(1-\xi)dt-i\xi dW_t,
\end{displaymath}
which is a linear stochastic differential equation in the complex plane and therefore globally well-posed,
as well as the change of variables.
Therefore the expectation of $\xi$ obeys the ordinary differential equation
\begin{displaymath}
\dfrac{d}{dt}\mathbb{E}[\xi]=1-\mathbb{E}[\xi],
\end{displaymath}
which can be solved to yield
\begin{eqnarray}\label{expected_eq}
\mathbb{E}[\xi(t)] &=& 1+(\mathbb{E}[\xi(t^*)]-1)e^{-(t-t^*)} \\ \nonumber
&=& 1+\left(\mathbb{E}\left[\phi(t^*)^{-1}\right]-1\right)e^{-(t-t^*)}.
\end{eqnarray}
Now consider $f(\xi_1(t),\xi_2(t)):=\frac12 |\xi(t)|^2=\frac12(\xi_1(t)^2+\xi_2(t)^2)$, where $\xi_1(t):=\Re(\xi(t)), \xi_2:=\Im(\xi)(t)$, which
obeys the random differential equation
$$
df=(\xi_1-f)dt,
$$
and therefore using \eqref{expected_eq} its expectation fulfills
$$
\dfrac{d}{dt}\mathbb{E}[f]=1+(\mathbb{E}[\xi_1(t^*)]-1)e^{-(t-t^*)}-\mathbb{E}[f],
$$
which solution reads
\begin{eqnarray}\nonumber
\mathbb{E}[f(t)] &=& 1-e^{-(t-t^*)}+(\mathbb{E}[\xi_1(t^*)]-1)(t-t^*)e^{-(t-t^*)}+e^{-(t-t^*)}\mathbb{E}[f(t^*)] \\ \nonumber
&=& 1-e^{-(t-t^*)}+\left(\mathbb{E}\left[\frac{\phi_1(t^*)}{|\phi(t^*)|^2}\right]-1\right)(t-t^*)e^{-(t-t^*)}+e^{-(t-t^*)}\mathbb{E}[f(t^*)] \\ \nonumber
&\le& 1 + e^{-(t-t^*)} + \left(\mathbb{E}\left[\frac{\phi_1(t^*)}{|\phi(t^*)|^2}\right]-1\right)(t-t^*)e^{-(t-t^*)} \\ \nonumber
&\le& 1 + (1+t-t^*)e^{-(t-t^*)}  \le  2.
\end{eqnarray}
Here we used that $\mathbb{E}[\vert \phi(t^*)\vert^{-2}]^{-1/2} \geq 1/2 \Rightarrow \mathbb{E}[f(t^*)] \le 2$ in the first inequality and
\begin{eqnarray}\nonumber
\mathbb{E}\left[\frac{\phi_1(t^*)}{|\phi(t^*)|^2}\right] &\le& \mathbb{E}\left[\frac{1}{|\phi(t^*)|}\right] \le  \mathbb{E}\left[\frac{1}{|\phi(t^*)|^2}\right]^{1/2} \le 2,
\end{eqnarray}
by Jensen inequality, in the second. Now, using H{\"o}lder inequality
\begin{eqnarray}\nonumber
1 &=& \mathbb{E}[1] = \mathbb{E}\left[\dfrac{\left\lvert \phi(t) \right\rvert}{\left\lvert \phi(t) \right\rvert}\right] \le \mathbb{E}[|\phi(t)|^2]^{1/2} \, \mathbb{E}[|\phi(t)|^{-2}]^{1/2} \\ \nonumber
&=& \sqrt{2} \, \mathbb{E}[|\phi(t)|^2]^{1/2} \, \mathbb{E}[f(t)]^{1/2} \le 2 \, \mathbb{E}[|\phi(t)|^2]^{1/2}.
\end{eqnarray}
For the asymptotic behavior, take the long time limit in this string of inequalities to find
\begin{eqnarray}\nonumber
1 &\le& \sqrt{2} \, \liminf_{t \to \infty} \left\{ \mathbb{E}[|\phi(t)|^2]^{1/2} \, \mathbb{E}[f(t)]^{1/2} \right\} \\ \nonumber
&=& \sqrt{2} \, \liminf_{t \to \infty} \, \mathbb{E}[|\phi(t)|^2]^{1/2} \times \\ \nonumber
& & \times \lim_{t \to \infty} \left\{
1-e^{-(t-t^*)}+\left(\mathbb{E}\left[\frac{\phi_1(t^*)}{|\phi(t^*)|^2}\right]-1\right)(t-t^*)e^{-(t-t^*)}+e^{-(t-t^*)}\mathbb{E}[f(t^*)]
\right\}^{1/2} \\ \nonumber
&=& \sqrt{2} \, \liminf_{t \to \infty} \, \mathbb{E}[|\phi(t)|^2]^{1/2}.
\end{eqnarray}
\end{proof}

\begin{remark}
Note that by H{\"o}lder inequality
\begin{eqnarray}\nonumber
1 &=& \mathbb{E}[1] = \mathbb{E}\left[\dfrac{\left\lvert \phi(t) \right\rvert}{\left\lvert \phi(t) \right\rvert}\right] \leq  \mathbb{E}[|\phi(t)|^2]^{1/2} \, \mathbb{E}[|\phi(t)|^{-2}]^{1/2} \\ \nonumber
&\Rightarrow& \mathbb{E}[|\phi(t)|^{-2}]^{-1/2} \le \mathbb{E}[|\phi(t)|^2]^{1/2}.
\end{eqnarray}
Therefore $\mathbb{E}[\vert \phi(t^*)\vert^{-2}]^{-1/2} \geq 1/2$ is a condition stronger than $\mathbb{E}[\vert \phi(t^*)\vert^{2}]^{1/2} \geq 1/2$.
\end{remark}

\section{Conclusions}
\label{conclusions}

In this work we have considered the connection between the PDE
\begin{equation}\nonumber
\dfrac{\partial \Psi}{\partial t}=\left(\dfrac{\partial}{\partial \phi}-\dfrac{1}{2}\dfrac{\partial^2}{\partial \phi^2}\right)
\left( \phi^2 \Psi \right),
\end{equation}
and the SDE
\begin{equation}\nonumber
d \phi = - \phi^2 dt + i \, \phi \, dW_t,
\end{equation}
that appears in the physics literature to study chemical kinetics modeled by Markov chains. This relation, which we have termed the imaginary
It\^o interpretation
of the PDE, has been accepted in the physical literature for decades but was also put into question in some works. From a puristic viewpoint, of course,
one cannot claim that a parabolic PDE with a negative diffusion is a Fokker-Planck equation. Perhaps more importantly, if one regards for instance the solutions
in appendix~\ref{expformula} for $t=0$ (that is, the initial conditions), one finds distributions rather than measures, what means that the initial condition for the SDE does not exist, at least as a random variable; obviously this suggests a very difficult, if not impossible, interpretation of the SDE.

Keeping these facts in mind one is tempted to claim that the imaginary It\^o interpretation is nothing but a formal step that cannot be justified. However,
the successes in the application of this theory (or different facets of it) \cite{cardy2, cardy5, cardy6, dfhk,howard,lee, munoz1,peliti1},
although they are all based on formal computations, point to the opposite direction.
This has been the motivation to build our connection between the two theories in section~\ref{notunreal}, which in principle should be valid for SDEs that
have as solution well-defined diffusion processes on the complex plane. In our particular case, a missing step in our proofs is the global existence of
the solution to the SDE. Although we have partially analyzed its dynamics in section~\ref{dynamics}, we have not found such an argument that would guarantee
the existence of a diffusion on the complex plane for all times. Nevertheless, the local existence in time guarantees that we can extend the Cauchy
representation in Theorem~\ref{connection2} for any finite, but arbitrary, lapse of time, since the initial distribution is supported
at the origin (despite its dependence on derivatives of the Dirac delta, what makes nontrivial solutions of the SDE to come into play).
This, in our view, establishes a clear connection between the imaginary It\^o interpretation and the forward Kolmogorov equation when the initial condition of the latter is compactly supported. Otherwise, the connection is established for short times via Theorem~\ref{connection}.

It is also important to try to see how our present results could match with recent criticisms to the imaginary It\^o interpretation. In~\cite{wiese}
one finds a claim that points to the validity of the imaginary It\^o interpretation at short times and its failure at long or even intermediate
times. This could perhaps be related to the singularization of the probability amplitude $\Psi$: if the initial condition of the Markov chain is
Poissonian then the probability amplitude will be a probability measure initially too; however as the time evolves it will become singular
(i.e. a distribution rather than a measure). Of course this is just a conjecture and further analysis would be necessary in order to assure this.
In this respect, the lapse of existence guaranteed in section~\ref{dynamics} might be related to this short time validity. Anyway, this lapse of
existence was not crucial in our analysis as we encoded the initial condition in a different way, via the initial
distribution (so what is really crucial in our case is the size of the support of this initial distribution); this suggests
in turn that there may be different notions of imaginary It\^o interpretation present in the literature.
In~\cite{benitez} the authors put into question the validity of the imaginary It\^o interpretation through a formal path integral analysis:
they conclude this by means of the identification of a path integral that is ill-posed. However, a parabolic PDE provided with a negative diffusion
is ill-posed, at least in the sense of Hadamard (Lemma 1.19, \cite{renar}), but nevertheless it could be well-posed in certain
distributional spaces~\cite{aragaosilva} or under additional conditions~\cite{miran}.
We do not know whether or not such extensions in the notion of solution can be carried out in the case of the
path integral too. Independently of this, the possibility that different notions of imaginary It\^o interpretation are being considered should
not be immediately disregarded in this case either.

Of course, another possible criticism to the theory of the imaginary It\^o interpretation is its potential utility. Mapping a continuous time Markov
chain into a PDE posed in a space of distributions looks like making a difficult problem an extremely difficult one instead. However, the previous successes
referred to above in the use of this framework suggest the interest that exploring the stochastic analytical side of it may have.
Correspondingly, it may also be interesting to study distributional PDEs or complex plane diffusions by means of Markov chains; indeed, the study of It\^o
diffusions by means of Markov chains is known to be simplifying and has been explored within the framework of Malliavin calculus~\cite{kohatsu}.

Finally, we wonder whether the imaginary It\^o interpretation is part of a bigger theory that links diffusions with PDEs. Apart from the classical
diffusion theory that links Fokker-Planck equations with SDEs~\cite{oksendal1}, one finds different theories that approach higher order and
fractional order PDEs with stochastic processes and pseudoprocesses, see for instance~\cite{vallois,debbi,funaki,hochberg,hochberg1,krylov,ovidio}.
Since we can regard the imaginary
It\^o interpretation as a link between singular second order PDEs and diffusions on the complex plane, there arises a natural question about the
extendability of this theory to the singular higher order and fractional order cases, and even about the existence of a general theory that
comprises all these connections as particular cases of a more general relation.\\


{\bf Acknowledgments.} {This work has been partially supported by a NRP Early Career Research Exchanges grant,
by the ICMAT-Severo Ochoa project, and by project PGC2018-097704-B-I00 of the Ministerio de Ciencia, Innovaci\'on y Universidades (Spain).}\\

\begin{appendices}

\section{Van Kampen System Size Expansion}\label{sectvan}

In this appendix we formally derive a mean-field macroscopic limit of reaction~\eqref{mainreact}
along the lines of the Van Kampen system size expansion~\cite{vankampen}.
Consider the forward Kolmogorov equation
\begin{displaymath}
\frac{d P_n(t)}{dt} = \frac{\lambda}{2} \left[(n+2)(n+1)P_{n+2}(t)-n(n-1)P_n(t) \right],
\end{displaymath}
and assume the existence of a regular enough function $f:\mathbb{R}_+ \times \mathbb{R}_+ \longrightarrow \mathbb{R}$,
with $\mathbb{R}_+ := [0,\infty)$, such that $f(\tau, x)=\lim_{\delta \rightarrow 0} P_{n\delta}(t \delta^{-1})$, $n \delta \to x$,
and $t \delta^{-1} \to \tau$. Keeping these ideas in mind, we can derive an approximate equation for $f$ via a Taylor expansion:
\begin{eqnarray}\nonumber
\delta^{-1}\dfrac{\partial f}{\partial \tau} (n\delta, t\delta^{-1}) &=&
\dfrac{\lambda}{2} \left[(n+2)(n+1)f(n\delta +2\delta , t\delta^{-1})-n(n-1)f(n\delta, t\delta^{-1})\right] \\ \nonumber
&=& \lambda \left[(2n+1)f(n\delta, t\delta^{-1})+\delta (n+2)(n+1)\dfrac{\partial f}{\partial x}(n\delta, t\delta^{-1}) \right.
\\ \nonumber && \left. + \, \delta^2 (n+2)(n+1)\dfrac{\partial^2 f}{\partial x^2}(n\delta, t\delta^{-1})+\dots \right];
\end{eqnarray}
now multiplying this equation by $\delta$ and formally taking the limit $\delta \to 0$ leads to
\begin{eqnarray}\nonumber
\dfrac{\partial f}{\partial \tau} (x, \tau) &=& \lambda \left[2x f(x, \tau)+x^2 \dfrac{\partial f}{\partial x}(x, \tau)\right] \\ \label{fpmf}
&=& \lambda \, \dfrac{\partial}{\partial x}\left( x^2 f \right).
\end{eqnarray}
Let $\Phi(x)$ be an infinitely differentiable function with compact support. We define a function $f$ to be a distributional solution
to equation~\eqref{fpmf} if, for every $\Phi$, the equality
\begin{displaymath}
\dfrac{d}{d \tau} \int_{\mathbb{R}_+} f \, \Phi \, dx = -\lambda \int_{\mathbb{R}_+} f \, x^2 \, \dfrac{\partial \Phi}{\partial x} \, dx
\end{displaymath}
holds. More generally, define a linear distribution $D_\tau:\mathbb{R}_{+} \times C^{\infty}_{0} \longrightarrow \mathbb{R}$
to be a solution of this equation if the following equality holds for every test function $\Phi$:
\begin{displaymath}
\dfrac{d}{d\tau}D_\tau[\Phi]=-\lambda D_\tau \left[x^2 \dfrac{\partial \Phi}{\partial x}\right].
\end{displaymath}
In particular, if we want the solution to be expressed in terms of a Dirac delta of the form $D_\tau=\delta_{\phi(\tau)}$, then:
\begin{displaymath}
\phi'(\tau) \dfrac{\partial \Phi}{\partial x}[\phi(\tau)]=-\lambda \phi^2(\tau) \dfrac{\partial \Phi}{\partial x}[\phi(\tau)]
\end{displaymath}
for every test function, what implies
\begin{displaymath}
\dfrac{d\phi(\tau)}{dt}=-\lambda \phi^2(\tau),
\end{displaymath}
which could be thought of as a mean-field approximation to equation~\eqref{isde}.

\section{Equations for the General Reaction}\label{sectgeneral}

In this appendix we derive the equations for the generating function and the amplitude that correspond to the annihilation-creation Markov process
\begin{displaymath}
jA\stackrel{\lambda}{\longrightarrow} \ell A,
\end{displaymath}
with $j,\ell \in \mathbb{N}\cup \{0\}$, obviously $j \neq \ell$, and $\lambda > 0$.
The forward Kolmogorov equation corresponding to this process reads \cite{mc}
\begin{displaymath}\label{appa:master}
\dfrac{1}{\lambda}\dfrac{d P_n(t)}{dt}=
\left(
\begin{array}{c}
n-\ell+j \\ j
\end{array}
\right) P_{n-\ell+j}(t)-
\left(
\begin{array}{c}
n \\ j
\end{array}
\right) P_{n}(t),
\end{displaymath}
for all $n\in \mathbb{N}\cup \{0\}$ and with the understanding that $P_m(t) \equiv 0$ whenever $m < 0$.

Using the generating function representation of this system,
\begin{displaymath}
G(t,x):=\sum_{n=0}^{\infty}P_n(t)x^n,
\end{displaymath}
leads to
\begin{eqnarray}\nonumber
\dfrac{1}{\lambda}\dfrac{\partial G}{\partial t} &=& \sum_{n=0}^{\infty} \frac{dP_n}{dt} x^n
\\ \nonumber
&=& \dfrac{1}{j!}\left[ \sum_{n=0}^{\infty} (n-\ell+j)\cdots(n-l+1)P_{n-\ell+j}x^n-\sum_{n=0}^{\infty} n\cdots(n-j+1)P_{n}x^n\right],
\end{eqnarray}
whenever $j \neq 0$, and thus
\begin{eqnarray*}
\dfrac{1}{\lambda}\dfrac{\partial G}{\partial t} & = & \dfrac{1}{j!}\left[ \sum_{n=j-\ell}^{\infty} n\cdots(n-j+1)P_{n}x^{n+\ell-j}-\sum_{n=0}^{\infty} n\cdots(n-j+1)P_{n}x^n\right] \\ & = & \dfrac{1}{j!}\left[ \sum_{n=0}^{\infty} n\cdots(n-j+1)P_{n}x^{n+\ell-j}-\sum_{n=0}^{\infty} n\cdots(n-j+1)P_{n}x^n\right]\\ & = & \dfrac{1}{j!}\left[ x^\ell \dfrac{\partial^j}{\partial x^j}G-x^j\dfrac{\partial^j}{\partial x^j}G \right];
\end{eqnarray*}
so we can conclude
\begin{equation}\label{eqgen}
\dfrac{\partial G}{\partial t}=\dfrac{\lambda}{j!}(x^\ell-x^j)\dfrac{\partial^j G}{\partial x^j}.
\end{equation}
An analogous computation shows that the case $j=0$ is still described by equation~\eqref{eqgen}.

The amplitude $\Psi(t,\phi)$ is related to the generating function via the coherent transform $\Psi(t,\phi)$:
\begin{displaymath}
G(t,x)=\int_0^{\infty}  \Psi(t,\phi) e^{\phi(x-1)} d\phi.
\end{displaymath}
Using the binomial theorem we find
\begin{displaymath}
x^\ell-x^j=\sum_{n=0}^{\ell \vee j} \left[ \left(
\begin{array}{c}
l \\ n
\end{array}
\right)-\left(
\begin{array}{c}
j \\ n
\end{array}
\right)\right] (x-1)^n,
\end{displaymath}
where we have used the convention $\left(
\begin{array}{c}
\ell \\ n
\end{array}
\right)=0$ if $n> \ell$ and $\ell \vee j := \max\{\ell,j\}$.
These two observations together with equation~\eqref{eqgen} and integration by parts lead to the result:
\begin{displaymath}
\dfrac{\partial \Psi}{\partial t}=\dfrac{\lambda}{j!}\sum_{n=0}^{j} (-1)^n\left[ \left(
\begin{array}{c}
\ell \\ n
\end{array}
\right)-\left(
\begin{array}{c}
j \\ n
\end{array}
\right)\right]\dfrac{\partial^n (\phi^j \Psi)}{\partial \phi^n},
\end{displaymath}
where we have assumed $j>\ell$, which is a necessary condition in order to eliminate the boundary terms generated by integrating by parts.

This last formula implies that the only two reactions that lead to second order operators are:
\begin{displaymath}
\begin{array}{c}
A+A \stackrel{\lambda}{\longrightarrow} \varnothing,  \\
A+A \stackrel{\lambda}{\longrightarrow} A.
\end{array}
\end{displaymath}

\section{An Explicit Distributional Solution}\label{expformula}

Based on the explicit solution to equation~\eqref{mark11} found in~\cite{Mcq2},
it is possible to derive an explicit solution to the formal Fokker-Planck equation~\eqref{bad1}:
\begin{eqnarray}\nonumber
\Psi(t,\phi) &=& \delta(\phi) + \sum_{k=1}^{k_0} \frac{(2k_0)! (k_0+k)!}{2^{-2k}(2k_0 + 2k)! (k_0-k)!} e^{-k(2k-1)\lambda t} \\ \nonumber &&
\times \left[ \sum_{j=0}^{2k} 2^{-j} \binom{2k}{j} \binom{-2k-1}{j} \delta^{(j)}(\phi)
-\sum_{j=0}^{2k-2} 2^{-j} \binom{2k-2}{j} \binom{-2j+3}{j} \delta^{(j)}(\phi)\right],
\end{eqnarray}
when initially we have exactly $2 k_0$ particles, and where $\delta^{(j)}$ denotes the $j-$th derivative of the Dirac delta and
$$
\binom{-n}{m}:=(-1)^m \binom{n+m-1}{m}
$$
for any $n,m \in \mathbb{Z}^+$. As an example, we consider this formula in the case of having initially exactly two particles:
\begin{equation}\nonumber
\Psi(t,\phi)=\delta_0+e^{-\lambda t}\left\lbrace \delta_0^{(2)}-2\delta_0^{(1)} \right\rbrace.
\end{equation}
Note that this formula shows that solutions to the ill-posed Fokker-Planck equation are in
general distributions rather than measures.

\section{More on Real Noise}\label{morern}

In this Appendix we consider the set of reactions
\begin{eqnarray} \nonumber
A &\stackrel{\alpha}{\longrightarrow}& \emptyset, \\ \nonumber
\emptyset &\stackrel{\beta}{\longrightarrow}& A, \\ \nonumber
A &\stackrel{\gamma}{\longrightarrow}& A + A,
\end{eqnarray}
which can be described via the forward Kolmogorov equation
$$
\frac{d P_n}{dt}= \gamma [(n-1)P_{n-1} -n P_n] + \beta(P_{n-1} - P_n) + \alpha [(n+1)P_{n+1} - nP_n].
$$
For our current purposes we make the choice $\alpha=\gamma=2\beta$; then we find the equation
$$
\partial_t G = \beta(x-1)(1 - 2 \partial_x + 2 x \partial_x)G,
$$
to be solved for the generating function $G$. Its solution reads
\begin{equation}\label{genmc}
G(t,x)= \frac{1}{\sqrt{1-2 \beta t (x-1)}} \,\, G_0 \left( \frac{x-2 \beta t (x-1)}{1-2 \beta t (x-1)} \right),
\end{equation}
where we have used the boundary condition $G(t,1) =1$. For the amplitude $\Psi$ we \emph{formally} find
$$
\partial_t \Psi = -\beta \partial_\phi \Psi + 2 \beta \partial_\phi^2 (\phi \Psi);
$$
note that we have neglected a Dirac delta that arises as a boundary term upon integration by parts in the derivation of this equation
from the coherent transform of the generating function (a problem that does not arise in section~\ref{rnoise}).
Note also that this PDE is the \emph{actual} Fokker-Planck equation that corresponds to the SDE
$$
d \phi = \beta \, dt + 2 \sqrt{\beta \phi} \, dW_t.
$$
The explicit solution of this equation is
$$
\phi(t)= \left( \sqrt{\phi_0} + \sqrt{\beta} \, W_t \right)^2,
$$
that in turn yields
\begin{eqnarray} \nonumber
\Psi(t,\phi)&= &\frac{1}{2\sqrt{8 \pi \beta t \phi}} \int_0^\infty \exp \left[ -\frac{ \left(\sqrt{\phi} -  \sqrt{\phi_0}\right)^2}{2 \beta t} \right]
\Psi_0(\phi_0) \, d\phi_0+ \\ \nonumber & &+\frac{1}{2\sqrt{8 \pi \beta t \phi}} \int_0^\infty \exp \left[ -\frac{ \left(\sqrt{\phi} + \sqrt{\phi_0}\right)^2}{2 \beta t} \right]
\Psi_0(\phi_0) \, d\phi_0 \\ \nonumber &=&\frac{1}{\sqrt{8 \pi \beta t \phi}} \int_0^\infty \exp \left[ -\frac{ \phi +  \phi_0}{2 \beta t} \right] \cosh \left[ \dfrac{\sqrt{\phi \phi_0}}{\beta t} \right]
\Psi_0(\phi_0) \, d\phi_0.
\end{eqnarray}
This result gives rise to the generating function
\begin{eqnarray}\nonumber
G(t,x) &=& \frac{1}{\sqrt{1-2 \beta t (x-1)}} \int_0^\infty \exp \left[ \frac{\phi_0 (x-1)}{1-2 \beta t (x-1)} \right]
\Psi_0(\phi_0) \, d \phi_0 \\ \nonumber
&=& \frac{1}{\sqrt{1-2 \beta t (x-1)}} \,\, G_0 \left( \frac{x-2 \beta t (x-1)}{1-2 \beta t (x-1)} \right),
\end{eqnarray}
in perfect agreement with~\eqref{genmc}. Note that this last result makes sense even if $\Psi_0$ is not a probability
measure.

\section{Preservation of It\^o Formula}\label{sectvs}

Consider a modification of our It\^o diffusion in the complex plane, i.e.
\begin{equation}\nonumber
d \phi = - \phi^2 dt + i \, \phi \, dW_t,
\end{equation}
given by
\begin{equation}\nonumber
d \varphi = - \varphi^2 dt + i \, \varphi \, d\mathcal{W}_t,
\end{equation}
where $\mathcal{W}_t=W^1_t +i W^2_t$, and $W^1_t$ and $W^2_t$ are independent Brownian motions; in other words $\mathcal{W}_t$ is a complex Brownian motion.
This second model is invalid from our viewpoint as it does not generate the right moments as the first one does, see Lemma~\ref{theo:moments};
however it rises an interesting question. It is well known that for any holomorphic function $f(\varphi)$ the usual chain rule
\begin{displaymath}
df(\varphi)=-\varphi^2 \dfrac{\partial f(\varphi)}{\partial \varphi} dt + i \varphi \dfrac{\partial f(\varphi)}{\partial \varphi} d\mathcal{W}_t
\end{displaymath}
holds (\cite{peres}, section 7.2), but what is the right calculus for $f(\phi)$?
Herein we show that, in the case in which we have purely imaginary noise, the usual It\^o stochastic calculus
\begin{equation}\label{sde_main2}
df(\phi)=-\phi^2 \left[  \dfrac{\partial f(\phi)}{\partial \phi}+\dfrac{1}{2}\dfrac{\partial^2f(\phi)}{\partial \phi^2}\right] dt
+i\phi \dfrac{\partial f(\phi)}{\partial \phi}dW_t
\end{equation}
still holds.
Now it is convenient to use the decomposition $\phi=\phi_1 + i \phi_2$ to transform our diffusion in the complex plane into a two-dimensional It\^o diffusion
\begin{equation}\nonumber
\left(
\begin{array}{c}
d\phi_1 \\ d\phi_2
\end{array}
\right) =-\left(
\begin{array}{c}
\phi_1^2-\phi_2^2 \\ 2\phi_1 \phi_2
\end{array}
\right)dt + \left(
\begin{array}{cr}
-\phi_2 & 0 \\ \phi_1 & 0
\end{array}
\right) \left(
\begin{array}{c}
dW^1_t \\ dW^2_t
\end{array}
\right)
\end{equation}
in order to prove the following precise result.

\begin{thm}
Let $f(\phi)=f_1(\phi)+if_2(\phi)$ be a holomorphic function; then equation~\eqref{sde_main2} holds, where
\begin{displaymath}
\left\lbrace
\begin{array}{l}
\dfrac{\partial }{\partial \phi}=\dfrac{1}{2}\left(\dfrac{\partial}{\partial \phi_1}-i\dfrac{\partial}{\partial \phi_2}\right) \\ [0.4cm] \dfrac{\partial }{\partial \overline{\phi}}=\dfrac{1}{2}\left(\dfrac{\partial}{\partial \phi_1}+i\dfrac{\partial}{\partial \phi_2}\right)
\end{array}
\right..
\end{displaymath}
\end{thm}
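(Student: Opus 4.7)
The plan is to reduce to the standard two-dimensional It\^o formula applied componentwise, and then use the Cauchy-Riemann equations (together with harmonicity) to recognize the resulting expressions as the Wirtinger derivatives $\partial f/\partial\phi$ and $\partial^2 f/\partial\phi^2$. Since $\phi=\phi_1+i\phi_2$ satisfies the two-dimensional real It\^o SDE displayed just before the statement, I first compute, for a real-valued smooth $g(\phi_1,\phi_2)$, the quadratic variations $(d\phi_1)^2=\phi_2^2\,dt$, $(d\phi_2)^2=\phi_1^2\,dt$, $d\phi_1\,d\phi_2=-\phi_1\phi_2\,dt$ (both components are driven by the same single Brownian motion $W_t$, which is essential).

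Next, I apply the standard It\^o formula separately to $f_1$ and $f_2$ viewed as real-valued functions of $(\phi_1,\phi_2)$, and then form $df=df_1+i\,df_2$. Collecting terms, the drift becomes
\begin{equation*}
\Bigl\{-(\phi_1^2-\phi_2^2)\partial_{\phi_1}f-2\phi_1\phi_2\,\partial_{\phi_2}f+\tfrac12\phi_2^2\partial_{\phi_1}^2 f+\tfrac12\phi_1^2\partial_{\phi_2}^2 f-\phi_1\phi_2\,\partial_{\phi_1}\partial_{\phi_2}f\Bigr\}\,dt,
\end{equation*}
and the diffusion becomes $(-\phi_2\partial_{\phi_1}f+\phi_1\partial_{\phi_2}f)\,dW_t$, where all derivatives act on the complex-valued $f=f_1+if_2$.

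The heart of the argument is then algebraic: I use holomorphy, which gives $\partial_{\phi_2}f=i\,\partial_{\phi_1}f=i\,\partial f/\partial\phi$ and, by iteration, $\partial_{\phi_1}\partial_{\phi_2}f=i\,\partial^2 f/\partial\phi^2$ and $\partial_{\phi_2}^2 f=-\partial^2 f/\partial\phi^2$ (the last identity is just the harmonicity of $f_1,f_2$). Substituting and factoring, the diffusion collapses to $i(\phi_1+i\phi_2)\partial_\phi f\,dW_t=i\phi\,\partial_\phi f\,dW_t$, while the first-order drift terms combine into $-(\phi_1^2-\phi_2^2+2i\phi_1\phi_2)\partial_\phi f=-\phi^2\,\partial_\phi f$ and the second-order drift terms combine into $-\tfrac12\phi^2\,\partial^2 f/\partial\phi^2$. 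Adding these reproduces exactly the claimed formula \eqref{sde_main2}.

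I expect no serious obstacle: the proof is a guided calculation, and the only place one needs to be careful is the sign bookkeeping when turning real partial derivatives into Wirtinger derivatives. The conceptual point worth highlighting in the proof (and which explains why this fails for the complex Brownian motion $\mathcal{W}_t$) is that the common driver $W_t$ produces the precise correlation structure $(d\phi_1)^2+2i\,d\phi_1 d\phi_2-(d\phi_2)^2=(-\phi_2+i\phi_1)^2\,dt=-\phi^2\,dt$, which is exactly what is needed for the second-order terms to assemble into $-\tfrac12\phi^2\partial_\phi^2 f$; with two independent Brownian motions the cross term would vanish and this collapse would not occur.
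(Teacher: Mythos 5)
Your argument is correct and follows essentially the same route as the paper's proof: apply the two-dimensional It\^o formula to $f=f_1+if_2$, use holomorphy to express $\partial_{\phi_2}f$, $\partial_{\phi_2}^2 f$, and $\partial_{\phi_1}\partial_{\phi_2}f$ in terms of the Wirtinger derivatives, and collect terms into $-\phi^2(\partial_\phi f+\tfrac12\partial_\phi^2 f)\,dt+i\phi\,\partial_\phi f\,dW_t$. Your closing observation that the single common driver yields the covariation identity $(d\phi_1)^2+2i\,d\phi_1 d\phi_2-(d\phi_2)^2=(i\phi)^2\,dt=-\phi^2\,dt$ is a nice conceptual gloss not made explicit in the paper, but the underlying computation is the same.
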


\begin{proof}
	The fact that $f$ is holomorphic, which implies $\dfrac{\partial f}{\partial \overline{\phi}} = 0$,
along with the definitions of $\dfrac{\partial f}{\partial \phi}$ and $\dfrac{\partial f}{\partial \overline{\phi}}$, yield the following identities
	\begin{eqnarray*}
		\dfrac{\partial f}{\partial \phi}   & = & \dfrac{\partial f}{\partial \phi_1}  =  -i\dfrac{\partial f}{\partial \phi_2}, \\
		\dfrac{\partial^2 f}{\partial \phi^2} & = &  \dfrac{\partial^2 f}{\partial \phi_1^2} =  -\dfrac{\partial^2 f}{\partial \phi_2^2}  =  -i\dfrac{\partial^2 f}{\partial \phi_1 \partial \phi_2}.
	\end{eqnarray*}
	On the other hand, the two dimensional It\^o rule yields
	\begin{eqnarray*}
	df & = & \left[-(\phi_1^2 - \phi_2^2)\dfrac{\partial f}{\partial \phi_1} - 2\phi_1\phi_2\dfrac{\partial f}{\partial \phi_2} + \dfrac{1}{2}\phi_2^2\dfrac{\partial^2f}{\partial \phi_1^2} +
	\dfrac{1}{2}\phi_1^2\dfrac{\partial^2f}{\partial \phi_2^2} - \phi_1\phi_2\dfrac{\partial^2 f}{\partial \phi_1\phi_2}  \right] dt  \\ & & + \left[ \phi_1 \dfrac{\partial f}{\partial \phi_2} - \phi_2 \dfrac{\partial f}{\partial \phi_1} \right] dW_t.
	\end{eqnarray*}
	Now, by substituting the previous expressions, we find
	\begin{eqnarray*}
	df & = & \left[-(\phi_1^2 - \phi_2^2)\dfrac{\partial f}{\partial \phi} - i 2\phi_1\phi_2\dfrac{\partial f}{\partial \phi} + \dfrac{1}{2}\phi_2^2\dfrac{\partial^2f}{\partial \phi^2} -
	\dfrac{1}{2}\phi_1^2\dfrac{\partial^2f}{\partial \phi^2} - i \phi_1\phi_2\dfrac{\partial^2 f}{\partial \phi^2}  \right] dt  \\ & & + \left[ i\phi_1\dfrac{\partial f}{\partial \phi} - \phi_2 \dfrac{\partial f}{\partial \phi} \right] dW_t
	\\ &=& - \phi^2 \left[ \dfrac{\partial f}{\partial \phi} + \dfrac{1}{2} \dfrac{\partial^2f}{\partial \phi^2} \right] dt  +  i \phi \dfrac{\partial f}{\partial \phi} dW_t.
	\end{eqnarray*}
\end{proof}

\end{appendices}

\vskip5mm
\noindent
{\footnotesize
	\'Alvaro Correales\par\noindent
	Departamento de Matem\'aticas\par\noindent
	Universidad Aut\'onoma de Madrid\par\noindent
	{\tt alvaro.correales@estudiante.uam.es}\par\noindent
	Carlos Escudero\par\noindent
	Departamento de Matem\'aticas Fundamentales\par\noindent
	Universidad Nacional de Educaci\'on a Distancia\par\noindent
	{\tt cescudero@mat.uned.es}\par\vskip1mm\noindent
	Mariya Ptashnyk\par\noindent
	Department of Mathematics\par\noindent
	Heriot-Watt University\par\noindent
	{\tt m.ptashnyk@hw.ac.uk}\par\vskip1mm\noindent
}
\end{document}